\documentclass[a4paper,fleqn]{cas-sc}

\usepackage[authoryear,longnamesfirst]{natbib}

\usepackage[utf8]{inputenc}
\usepackage{graphicx}
\usepackage{amsmath}
\usepackage{amssymb}
\usepackage{amsthm}
\usepackage{dirtytalk}

\usepackage{quiver}
\usepackage{todonotes}

\newtheorem{thm}{Theorem}
\newtheorem{prop}[thm]{Proposition}%
\newtheorem{lem}[thm]{Lemma}
\newtheorem{cor}[thm]{Corollary}

\newtheorem{rem}{Remark}%

\newtheorem{dfnt}{Definition}%

\def\N{\mathbb{N}}

\newcommand{\cI}{\mathcal{I}}

\newcommand{\bA}{\mathbb{A}}
\newcommand{\bB}{\mathbb{B}}
\newcommand{\md}{\mathrm{UMod}}
\newcommand{\qmod}{\mathrm{Mod}}
\newcommand{\fmod}{\mathrm{FMod}}
\newcommand{\smod}{\overline{\mathrm{Mod}}}
\newcommand{\amod}{\mathrm{AlgMods}}
\newcommand{\merg}{\mathrm{Merge}}
\newcommand{\G}{\Vec{H}}
\newcommand{\K}{\mathcal{K}}
\newcommand{\cS}{{\mathcal{S}}}
\newcommand{\cT}{{\mathcal{T}}}
\def\Z{\mathbb{Z}}

\begin{document}

\let\WriteBookmarks\relax
\def\floatpagepagefraction{1}
\def\textpagefraction{.001}
\shorttitle{Homological algebra over non-unital algebras}
\shortauthors{E. Medioni et~al.}

\title[mode = title]{Homological algebra over non-unital rings and algebras, with applications to $(\infty, 1)$-categories}


\author[1]{Eric Goubault}[orcid=0000-0002-3198-1863]
\ead{eric.goubault@polytechnique.edu}
\credit{Conceptualization, Writing - Review \& Editing, Supervision.}

\author[1]{Eliot M\'edioni\corref{cor1}}
\ead{eliot.medioni@student-cs.fr}

\cortext[cor1]{Corresponding author}
\credit{Conceptualization, Writing - original draft.}

\affiliation[1]{organization={LIX, CNRS and Institut Polytechnique de Paris},
addressline={},
postcode={91128},
city={Palaiseau},
country={France}}

\begin{abstract}
The article is developing homological algebra in modules over non-unital rings and algebras. The main application is the definition and study of (directed) homology of $(\infty,1)$-categories and of directed spaces, including relative homology and its exact sequence. 
\end{abstract}

\begin{keywords}


\MSC[2020] 55N35 18E10 13C60
\end{keywords}


\maketitle



\section{Introduction}

The category of modules over rings is the archetype of the category in which to carry out homological algebra. Indeed this category is abelian, and all abelian categories can be represented as a concrete category of modules over some ring, by the Freyd-Mitchell embedding theorem \cite{freyd1964abelian}. This goes for modules over {\em unital rings}, and a natural question is to know whether this holds as well for modules over non-unital rings, for which there exists several versions, e.g. firm, closed and $s$-unital modules, \cite{quillen,thfmn,tominaga}. 
Under tame assumptions, all those notions because equivalent and give rise to an abelian category, making them prime candidates for homological theories. 


This question is not uniquely of an abstract nature. In numerous contexts, one would actually need a homology theory over non-unital rings, or even non-unital algebras. 

For instance, in topological data analysis, persistence modules \cite{Carlsson,multidimpersistence,Bubenik} are used to define persistence homology, and are, by Morita equivalence, the same as persistence modules over their path algebras \cite{assocalg}. In case the underlying filtration or  multi-dimensional filtration has infinitely many strata, or in case the underlying category has an infinite number of objects, the corresponding path algebra has an infinite number of idempotents. In this case, the path algebra (and in particular its underlying ring) has no natural unit, and persistence modules are to be considered over a non-unital algebra (and underlying ring). 

In directed topology \cite{thebook,grandisbook}, topological spaces under study are ``directed'', meaning they have ``preferred directions'', for instance a cone in the tangent space, if we are considering a manifold, or the canonical local coordinate system in a precubical set. A directed homology theory that accounts for directed features has been introduced in \cite{eric}, as particular persistence modules over the path algebra of the underlying quiver of a precubical set, building on natural homology theories \cite{Dubut}. Once again, if the precubical set has an infinite number of points, this path algebra has to be considered non-unital. This explains the limitation of the approach of \cite{eric} which only considers finite precubical sets, and the impossibility to define a directed homology theory for more general directed spaces, let alone $(\infty,1)$-categories. 

In this paper, we begin by recapping the somewhat scattered knowledge about modules over non-unital rings and adapt it to (bi-)modules over non-unital algebras. We then proceed to apply it to a (directed) homology theory for simplicially enriched categories, which constitute a model for $(\infty,1)$-categories in the case where the hom-simplicial-sets are Kan. 
We prove that, even in this non-unital context, relative homology can be defined and corresponding exact sequences, proven. We make the conditions for these exact sequences to hold more explicit in the case of directed spaces, extending the approach of \cite{eric}, at the end of the article. 

\paragraph{Organization of the paper}

We start in Section \ref{sec:background} by reviewing the scattered litterature about modules over non-unital rings and algebras. An important ingredient in understanding the properties of categories of modules over non-unital algebras is the unitalization functor, for algebras and modules, that we study in great detail in Section \ref{sec:unitalization}. 
The properties of firm, closed and $s$-unital modules (over non-unital rings and algebras) are then recapped for some, and studied (as new material) for others, in Section \ref{sec:categories}. Among these, the extension of scalars functor, Section \ref{sec:extensionscalar}, is fully studied in this non-unital case, as this will prove instrumental in the definition and properties of relative homology and the relative homology exact sequence. 

In most applications we are envisioning, such as the ones in directed topology, there is a need for 
bimodules instead of (left or right) modules. In the classical unital case, there is no particular difficulty in going from (left or right) modules to bimodules since $A$-$B$-bimodules can be seen as $A{\otimes}_\Z B^{op}$ (left) modules. 
This is not always the case for bimodules over non-unital rings or algebras, so we show directly that $s$-unital bimodules over fixed $s$-unital algebras form an abelian category. 

We then proceed to the main application of this article, the definition and study of a directed homology theory of $(\infty,1)$-categories, Section \ref{sec:infinityone}, including relative homology and the relative homology exact sequence, largely generalizing the results of \cite{eric}. 

Finally, we apply the directed homology theory of $(\infty,1)$-categories to directed spaces, that we briefly review, and for which we can give more explicit constructions of relative pairs. 

\paragraph{Main results}

The article intends to act both as a survey of known, folklore theorems about modules over non-unital rings and algebras, that are very much scattered in the literature, and as a proposal for new homology theories, or generalization of existing (directed) homology theories such as the one developed in \cite{eric}. Most notably,
\begin{itemize}
\item We are reviewing the notions of modules over non-unital rings and algebras, Section \ref{sec:background}. In particular, we are trying to make explicit all conditions and constructions that we need, that are generally otherwise not fully spelled out. 
\item We prove that there is an isomorphism of categories between the category of unital (bi)modules over the unitalization of algebra $\bA$ (and $\bB$) and the category of (non-unital) (bi)modules over non-unital algebra $\bA$ (and $\bB$), Theorem \ref{thm:3},
\item We prove that restriction of scalars, as well as extension of scalars, can be defined even in the non-unital context, 
\item We prove that $s$-unital modules over $s$-unital algebras, a convenient case of firm modules over non-unital algebras, form an abelian category, Corollary \ref{cor:10},
\item We define a convenient category of $s$-unital $\bA$-$\bB$-bimodules and prove that it is an abelian category, and that it is also the category of modules over the (unital) algebra $\hat{\bA}\otimes \hat{\bB}^{op}$ ($\hat{\bA}$, resp. $\hat{\bB}$, being the unitalization of non-unital algebras $A$ and $B$), Proposition \ref{prop:11}, 
\item Given an $(\infty,1)$-category, seen as a Kan simplicial enriched category $\cS$, we define a path algebra $R[\cS]$ which we show is $s$-unital, Proposition \ref{prop:12}, 
\item Chain complexes that are used to define a directed homology theory for $\cS$ are shown to define $s$-unital bimodules, Theorem \ref{thm:13},
\item We prove that under some conditions, the definition of directed homology of $(\infty,1)$-categories is functorial, Theorem \ref{thm:15}, Corollary \ref{cor:17} and that directed homology is an invariant for Kan simplicially enriched categories, Corollary \ref{cor:18},
\item We prove that there is an exact sequence for relative homology of $(\infty,1)$-categories, Section \ref{sec:exact},
\item We particularize these constructions for directed spaces seen as $(\infty,1)$-categories, Theorem \ref{thm:21}, and give simple conditions for the relative homology exact sequence to hold, Proposition \ref{prop:24}.
\end{itemize}

\section{Background on modules and (nonunital) algebras}

\label{sec:background}

We first 
review here the literature about modules over nonunital algebras. The content of this section consists mostly in already known results and folklore theorems, that we recap and collect in order to be self contained. 

\subsection{Nonunital algebras}

\begin{dfnt}[Rings and rings without unit]
Let $(A,+)$ be an abelian group. If $\times$ is a binary operation on $A$ that is associative and distributive over $+$, we say that $(A,+,\times)$ is a \textbf{rng} (or ``rung") 
or ring without unit. 

If there exists $a \in A$ s.t. $\forall x \in A, \; a\times x = x \times a = x$ then $a$ is unique, denoted by the symbol $1$, and $(A,+ , \times)$ is called a \textbf{ring} or unital ring.  

A \textbf{rng homomorphism} from an rng $(A,+,\times)$ to an rng $(B,+, \times)$ is a function $f: A \to B$ that is compatible with $+$ and $\times$. A ring homomorphism between two rings is a rng homomorphism that preserves the unit element $1$.
\end{dfnt}

\begin{dfnt}[Algebras and unital algebras]
    Let $R$ be a commutative ring.
    An \textbf{algebra} on $R$ is a rng $(\bA ,+, \times)$ equipped with a function $R \times \bA \to \bA$, denoted by $.$, so that:
    \begin{enumerate}
        \item[(i)] $(\bA,+,.)$ is a left $R$-module
        \item[(iv)] The product $\times$ is $R$-bilinear
    \end{enumerate}
    If the underlying rng is unital, then the algebra is said to be \textbf{unital}.
    
    An algebra homomorphism $\bA \to \bB$ is a function $\phi : \bA \to \bB$ such that
    \begin{enumerate}
        \item[(i)] $\phi$ is a rng homomorphism
        \item[(ii)] $\phi$ is a $R$-module homomorphism
    \end{enumerate}
\end{dfnt}

In what follows, we will never assume an algebra to be unital unless otherwise stated.

\begin{rem}
This is not a conventional definition, and they are many different notions of nonunital algebras in the literature. Nonunital algebras $\bA$ are often defined as a ring together with a nonunital ring morphism from $R$ onto the center of $\bA$ that satisfies some conditions. However, in our particular case, we need $\bA$ to always inherit a $R$-module structure, which would not be the case for these nonunital algebras. Our definition can also be motivated by the following proposition.
\end{rem}

\begin{rem}
Since $R$ is commutative, choosing between a left or right module structure is strictly a matter of notations. When we have one of either kind, we can always interpret it as a bimodule structure by defining $r.a$ as $a.r$ (or the opposite) for any $r\in R$ and $a \in \bA$.
\end{rem}

\begin{rem}
$R$ itself is a unital $R$-algebra.
\end{rem}

\begin{prop}
\begin{enumerate}
    \item Let $(A, +, \times, .)$ be a unital $R$-algebra and $\bA$ a sub-$R$-module of $(A, +, .)$ that is stable by $\times$.
    Then $\bA$ together with the induced laws is a an $R$-algebra.
    \item Any (nonunital) $R$-algebra $(\bA, +, \times, .)$ arises in this manner as a sub-$R$-module of a unital algebra $A$ such that $\times$ is induced from the product in $A$.
\end{enumerate}
\end{prop}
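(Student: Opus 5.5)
Part (1) is a direct verification, and part (2) is the standard unitalization $R\oplus\bA$.

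For (1), restricting the addition, product and scalar action of $A$ to $\bA$ gives well-defined operations. Since $\bA$ is a sub-$R$-module, it is an abelian group under $+$ and a left $R$-module. Since $\bA$ is stable under $\times$, the product restricts to a map $\bA\times\bA\to\bA$. Associativity, distributivity over $+$ and $R$-bilinearity are universally quantified identities that already hold in $A$, so they hold for elements of $\bA$. Hence $\bA$ is an $R$-algebra in the sense of the definition above; it may well be non-unital, which the definition allows.

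For (2), given an $R$-algebra $\bA$, I set $A=R\oplus\bA$ as an $R$-module, with componentwise addition and $r\cdot(s,a)=(rs,ra)$. The product is
\[
(r,a)\times(s,b)=(rs,\; r.b+s.a+ab).
\]
The steps, in order, are:
\begin{itemize}
\item \textbf{Bilinearity.} The product is $R$-bilinear. Each of the four terms $rs$, $r.b$, $s.a$, $ab$ is bilinear in the pair of inputs, using that $R$ is commutative and that the product of $\bA$ is $R$-bilinear. Distributivity over $+$ follows from bilinearity.
\item \textbf{Unit.} The element $(1,0)$ is a two-sided unit, since $(1,0)(s,b)=(s,b)$ and symmetrically.
\item \textbf{Associativity.} I expand both $((r,a)(s,b))(t,c)$ and $(r,a)((s,b)(t,c))$.
\item \textbf{Embedding.} The map $a\mapsto(0,a)$ is an injective $R$-module map whose image is a sub-$R$-module stable under $\times$. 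Indeed $(0,a)(0,b)=(0,ab)$, so the induced product is exactly that of $\bA$, and $\bA$ is identified with this submodule.
\end{itemize}

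The only step with real content is associativity. Expanding either side of the associativity check gives first component $rst$. The second component is
\[
rs.c+rt.b+st.a+r.(bc)+s.(ac)+t.(ab)+abc
\]
on both sides. To see this, move scalars through products using bilinearity, for instance $(s.a)c=s.(ac)$ and $a(t.b)=t.(ab)$. Then use associativity of $\bA$ and commutativity of $R$ so that, for example, $r(s.c)=rs.c=sr.c$. This is a careful but routine bookkeeping check. Commutativity of $R$ is essential here, and it is exactly why the definition of an algebra requires $R$ to be commutative.
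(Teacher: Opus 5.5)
Your proposal is correct and follows the paper's approach. Part (1) is the routine inheritance argument the paper calls straightforward, and part (2) is exactly the unitalization (Dorroh extension) $\widehat{\bA}$ of Definition \ref{def:algunitalization}, with the factors ordered as $R\oplus\bA$ instead of $\bA\times R$; your bilinearity, unit and associativity checks, which the paper leaves implicit, are right.
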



The proof of the first point is straightforward and the second results almost immediately from the following construction:

\begin{rem}
\label{rem:not}
In what follows, we will generally reserve the letters $A, B, \ldots$ for unital algebras, and $\bA, \bB,\ldots$ for non-unital algebras.
\end{rem}

\begin{dfnt}[Unitalization]
\label{def:algunitalization}
Let $R$ be a commutative ring (with unit).
Let $\bA$ be an $R$-algebra.

Let us endow the abelian group $\widehat{\bA} = (\bA,+) \times (R,+)$ with the product given by:

$$(a,r)\times (b,s) = (ab + r.b + a.s, rs)$$

Then $\widehat{\bA}$ is a unital algebra on $R$ called the \textbf{unitalization} of $\bA$.
Its unit is $(0,1)$.
\end{dfnt}

This construction is also sometimes called called the \textit{Dorroh extension} in reference to a 1932 article studying the case where $R$ is the ring of integers \cite{Dorr}.

\subsection{Modules over (nonunital) algebras}

We will now define and study the properties of modules over nonunital algebras. 

\begin{dfnt}
\label{def:modules}
    Let $R$ be a ring and $\bA$ some $R$-algebra.
    
    Let  $(M,+)$ be an abelian group,
 together with a function $._{_\bA} : \bA \times M \to M$ (resp. $M \times \bA \to M$) and a function $._{_R} : R \times M \to M$ (resp. $M \times R \to M$) such that:
 \begin{enumerate}
    \item $\forall m \in M, \forall a,b \in \bA, \quad
    (a+b)._{_\bA}m = a._{_\bA}m + b._{_\bA}x$ (resp. $m.(a+b) = m.a + m.b$)
    \item $\forall m, n\in M, \forall a \in \bA, \quad
    a._{_\bA}(m+n) = a._{_\bA}m + a._{_\bA}n$ (resp. $(m+n).a = m.a + n.a$)
    \item $\forall a,b \in \bA, \forall m \in M, \quad
    a._{_\bA}(b._{_\bA}m) = (ab)._{_\bA}m$ (resp. $(m.a).b = m.(ab)$)
    \item $(M,+,._{_R})$ if a left (resp. right) module (in the usual, unital sense) over the unital ring $R$
    \item $\forall r \in R, \forall a \in \bA, \forall m \in M$,
    \[r._{_R}(a._{_\bA} m) = (r.a)._{_\bA} m = a._{_\bA} (r._{_R} m)\]
    (resp. \((m._{_\bA} a)._{_R} r = m._{_\bA} (r.a) = (m._{_R} r)._{_\bA} a\))
    
 \end{enumerate}

Then we call $(M,+,.)$ a \textbf{left-module} (resp. right-module) over $\bA$.
\end{dfnt}

The reasoning behind this definition will become clearer after having studied the structural properties of the unitalization, and particularity its effect on module categories.

\begin{rem}
The relation $(r.a)._{_\bA} m = a._{_\bA} (r._{_R} m)$ is best understood by defining a right-$R$-module structure on $M$ by $m.r := r._{_R} m$, 
so that the relation can be rewritten as:
\[a.(r.m) = (a.r).m\]
\end{rem}

When $\bA$ is unital, then 
\[
r._{_R} m = r._{_R} (1_{_\bA}._{_\bA} m) = (r.1_{_\bA} )._{_\bA} m
\]
\noindent and thus $._{_R}$ is uniquely determined by $._{_\bA}$. This is the property that makes the different definitions of a unital $R$-algebra coincide.

\begin{dfnt}
If $(M, +, ._{_R}, ._{_\bA})$ is a left module over an $R$-algebra $\bA$ and $(M, +, ._{_S}, ._{_\bB})$ is a right module over an $S$-algebra $\bB$ such that: 
\begin{itemize}
    \item \( \forall a \in \bA, \forall b \in \bB, \
    a._{_\bA}(m._{_\bB} b) = (a._{_\bA} m)._{_\bB} b\)
    \item \(\forall a \in \bA, \forall s \in S, \
    a._{_\bA} (m._{_S} s) = (a._{_\bA} m)._{_S} s\)
    \item \(\forall r \in R, \forall b \in \bB, \
    r._{_R} (m._{_\bB} b) = (r._{_R} m)._{_\bB} b\)
    \item \(\forall r \in R, \forall s \in S, \
    r._{_R} (m._{_S} s) = (r._{_R} m)._{_S} s\)
\end{itemize}
Then $(M, +, ._{_\bA}, ._{_\bB})$ is called a $\bA$-$\bB$-bimodule.
\end{dfnt}

In the following, for simplicity's sake, we will generally drop the subscript of the action of the algebra $._{_\bA}$ and $._{_\bB}$ when the algebras are clear in the given context.

\begin{dfnt}
 A left (resp. right) \textbf{module homomorphism} from $M$ to $N$ two left (resp. right) modules on $\bA$ is a $R$-linear function $f: M \to N$ such that:
 \[f(\lambda .x + \mu .y) = \lambda .f(x) + \mu .f(y)\]
 (resp. $f(x.\lambda + y.\mu) = f(x).\lambda + f(y).\mu$)for all $\lambda, \mu \in \bA$ and $x, y \in M$.
 
 A $\bA$-$\bB$-bimodule homomorphism from $M$ to $N$ is a left $\bA$-module homomorphism $f : M \to N$ that is also a right $\bB$-module homomorphism.
\end{dfnt}


The category of $\bA$-$\bB$-bimodules will be denoted by $_\bA \qmod_\bB$. 

When $A$ and $B$ are unital algebras, the category of \textit{unital} $A$-$B$-bimodules, that is $A$-$B$-bimodules in the usual sense over the unital rings $A$ and $B$, will be denoted as $_A \md_B$.

It is clear that an algebra on a commutative ring $R$ has a canonical $R$-$R$-bimodule structure with added commutativity properties: for any $a$ in the algebra and any $r \in R$, the exterior product $r . a$ (resp. $a.r$) is defined as $r \times a$ (resp. $a \times r$) and we have the property that $a.r = r.a$.

We will also need in the sequel the category of bimodules over \textit{varying} algebras: 

\begin{dfnt}[Category of bimodules over algebras]
If $R$ and $S$ are commutative (unital) rings, define the following category:
\begin{itemize}
    \item The objects are the triples $(\bA, M, \bB)$ where $\bA$ is an $R$-algebra, $\bB$ is an $S$-algebra and $M$ is a $\bA-\bB$-bimodule
    \item The morphisms from $(\bA, M, \bB)$ to $(\bA', M', \bB')$ are the triples $(f,g,h)$ where $f : \bA \to \bA'$ is an $R$-algebra morphism,
    $h : \bB \to \bB'$ is an $S$-algebra morphism and $g : (M, +) \to (M', +)$ is a group homomorphism such that:
    \[\forall a \in \bA, \forall m \in M, \forall b \in \bB,
    h(a.m.b) = f(a).h(m).g(b)\]
    \item Composition is the usual composition of functions
\end{itemize}
This category is the \textbf{category of bimodules over $\mathbf{R}$-algebras on the left and $\mathbf{S}$-algebras on the right}
, that we will denote by $_R \amod _S$. 
\end{dfnt}

\section{Unitalization of algebras and modules, and their properties}

\label{sec:unitalization}

This paragraph aims at showing that unitalization is not an arbitrary operation, in that it is natural in two very strong meanings of the term.

\begin{thm}
    Let $U$ be the functor from $\mathrm{Alg}_R$ to $\mathrm{UAlg}_R$ such that:
    \begin{itemize}
        \item $U(\bA) = \widehat{\bA}$ for $\bA$ an object of $\mathrm{Alg}_R$ (see Definition \ref{def:algunitalization})
        \item $U(f) : (a,r) \mapsto (f(a), r)$ for $f : \bA \to \bB$ in $\mathrm{Alg}_R$
    \end{itemize}
    
    Then $U$ is left adjoint to the forgetful functor from $\mathrm{UAlg}_R$ to $\mathrm{Alg}_R$.
\end{thm}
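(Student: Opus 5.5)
We plan to prove the adjunction through the unit–universal property formulation. For an $R$-algebra $\bA$, let $\iota_\bA : \bA \to \widehat{\bA}$, $a \mapsto (a,0)$. We will show that $(\widehat{\bA}, \iota_\bA)$ is a universal arrow from $\bA$ to the forgetful functor $F : \mathrm{UAlg}_R \to \mathrm{Alg}_R$. That is, for every unital $R$-algebra $B$ and every (nonunital) algebra morphism $f : \bA \to F(B)$, there is a unique unital algebra morphism $\bar f : \widehat{\bA} \to B$ with $\bar f \circ \iota_\bA = f$. The standard correspondence between universal arrows and adjunctions then gives $U \dashv F$, with $U$ acting on morphisms as described in the statement.

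The preliminary checks come first. We verify that $\widehat{\bA}$ is a unital $R$-algebra with unit $(0,1)$. Associativity of the product $(a,r)(b,s) = (ab + r.b + a.s, rs)$ follows by expanding both sides and using associativity in $\bA$, $R$-bilinearity of $\times$, and the convention $a.s = s.a$. The $R$-module structure is $t.(a,r) = (t.a, tr)$, and bilinearity is routine. We also check that $\iota_\bA$ is an algebra morphism, and that $U(f)$ is a unital algebra morphism whose formula coincides with the $\bar{(\iota_\bB\circ f)}$ produced below. This coincidence will give functoriality and naturality for free.

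The core step is the universal property. For existence, define $\bar f(a,r) = f(a) + r.1_B$. It is additive and $R$-linear, and it sends $(0,1)$ to $1_B$. For multiplicativity we compute
\[
\bar f(a,r)\,\bar f(b,s) = f(a)f(b) + r.f(b) + s.f(a) + rs.1_B ,
\]
using bilinearity of the product in $B$ and $f(a)(s.1_B) = s.f(a)$. This equals $\bar f(ab + r.b + a.s, rs)$ because $f$ is $R$-linear and multiplicative. For uniqueness, note that any unital morphism $g$ with $g\circ\iota_\bA = f$ satisfies
\[
g(a,r) = g((a,0) + r.(0,1)) = f(a) + r.1_B ,
\]
since $g$ is $R$-linear and preserves the unit.

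Finally we package the result as a natural bijection $\mathrm{UAlg}_R(\widehat{\bA}, B) \cong \mathrm{Alg}_R(\bA, F B)$, given by $g \mapsto g\circ\iota_\bA$. Naturality in $B$ is immediate from composition. Naturality in $\bA$ amounts to $U(h)\circ\iota_\bA = \iota_{\bA'}\circ h$, which is clear from the formula for $U(h)$. The main obstacle is not conceptual but the bookkeeping in the associativity of $\widehat{\bA}$ and the multiplicativity of $\bar f$. Here one must consistently use that the $R$-action on $\bA$ commutes with the product, i.e. $r.(ab) = (r.a)b = a(r.b)$ by bilinearity, and that $R$ is commutative. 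We would carry out these expansions explicitly, grouping terms by how many scalar factors they contain.
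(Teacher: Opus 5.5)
Your proof is correct. It uses the same two maps as the paper but packages the adjunction differently.

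The paper writes down the unit $\eta_\bA(x) = (x,0)$ (your $\iota_\bA$) and the counit $\epsilon_A(x,r) = x + r.1_A$, asserts that they are natural, and checks the two triangle identities. You instead show that $(\widehat{\bA},\iota_\bA)$ is a universal arrow to the forgetful functor, using the extension $\bar f(a,r) = f(a) + r.1_B$ and a uniqueness argument, and then invoke the standard correspondence between universal arrows and adjunctions. The two presentations translate directly into each other: your $\bar f$ equals $\epsilon_B \circ U(f)$, and the paper's counit is $\epsilon_A = \overline{\mathrm{id}_{F(A)}}$.

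Your route has three advantages:
\begin{itemize}
\item The one-line computation $g(a,r) = g(a,0) + r.g(0,1)$ replaces the triangle identities.
\item Naturality of the hom-set bijection follows from the single identity $U(h)\circ\iota_\bA = \iota_{\bA'}\circ h$.
\item You check points the paper leaves implicit: that $\widehat{\bA}$ is associative with unit $(0,1)$, and that $x + r.1$ is multiplicative and unit-preserving. The latter is exactly what is needed for the paper's $\epsilon_A$ to be a morphism of $\mathrm{UAlg}_R$ at all.
\end{itemize}
Your route also sidesteps the subscript bookkeeping in the paper's triangle-identity computation. The paper's presentation, for its part, displays both $\eta$ and $\epsilon$ explicitly and symmetrically. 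It leaves the morphism and naturality checks to the reader.
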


This means that the unitalization is functorial and is the most general way to add a unit to a given algebra. This of course extends to right algebras.

\begin{proof}
It is clear that $U$ is functorial. Denote by $I$ the forgetful functor.

Let us show that $(U,I)$ is an adjoint pair by giving explicitely the natural transformations satisfying the unit-counit equations.

In this proof, as already noted in Remark \ref{rem:not} we will use different fonts for naming unital and nonunital algebras, for the sake of clarity: for instance, in the sequel, $\bA$ will denote a nonunital algebra, whereas $A$ will denote a unital algebra. 

To any (nonunital) $R$-algebra $\bA$ associate the $R$-algebra homomorphism 
\[\eta_\bA : \bA \to IU(\bA) \]
defined by:
\[\eta_{\bA} (x) =
(x, 0)\]
To any unital $R$-algebra $A$ associate the unital algebra homomorphism 
\[\epsilon_{A} : UI(A) \to A\]
defined by
\[
\epsilon_{A} : (x,r) \mapsto  x + r.1_A
\]

Then $\eta$ is a natural transformation from $\mathrm{Id}_{\mathrm{Alg}_R}$ to $UI$ and $\epsilon$ is a natural transformation from $IU$ to $\mathrm{Id}_{\mathrm{UAlg}_R}$.

Moreover, for any (nonunital) algebra $\bA$,
\[
(\epsilon U \cdot U \eta)_\bA
= \epsilon_{U(IU(\bA))} \circ U(\eta_\bA) \]
and for any $x \in \bA$ and $r \in R$, $U(\eta_\bA) (x,r) = (\eta_\bA (x), r) = ((x,0), r)$ so 
\[ (\epsilon U \cdot U \eta)_\bA (x,r)
= \epsilon_{UIU(\bA)}( ((x,r), 0) )
= (x,r)
\]
For any unital algebra $A$:
\[
(I \epsilon \cdot \eta I)_{A} =
I(\epsilon_{UI(A)}) \circ \eta_{I(A)}
\]
so for any $x \in A$:
\[
(I \epsilon \cdot \eta I)_{A} (x)
= I(\epsilon_{UI(A)}) (x,0)
= \epsilon_{UI(A)} ((x,0))
= x
\]
As such, $(U,I)$ is an adjoint pair with unit $\eta$ and counit $\epsilon$

\end{proof}

Using the previous construction, we show that the category of nonunital modules over nonunital algebras is isomorphic to the category of unital modules over unital algebras:

\begin{thm}
\label{thm:3}
     Let $R$ and $S$ be commutative (unital) rings, $\bA$ an algebra on $R$ and $\bB$ an algebra on $S$. 

     To every $M \in {}_\bA\qmod_\bB$, associate $F(M) \in {} _{\widehat{\bA}} \md _{\widehat{\bB}}$ to be $M$, with the  addition law, but whose external laws are given by:
    \[
    (a,r).m = a._{_\bA} m + r._{_R} m \quad
    \text{ and } \quad
    m.(b, s) = m._{_\bB} b + m._{_S} s
    \]
    
     To every $f : M \to N$ in $_\bA\qmod_\bA$, associate $Ff \in {} _{\widehat{\bA}} \md _{\widehat{\bB}}$ which is equal to $f$. 
     Then $F$ is an isomorphism of categories.
\end{thm}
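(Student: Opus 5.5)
The plan is to check three things in turn: that $F$ lands in ${}_{\widehat{\bA}}\md_{\widehat{\bB}}$, that it is a functor, and that it has a strict two-sided inverse functor $G$. Since $F$ is the identity on underlying sets and maps, functoriality will be automatic once $F$ is well defined on objects and morphisms.

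\textbf{Objects.} Given $M \in {}_\bA\qmod_\bB$, I will verify that $(a,r).m = a.m + r.m$ makes $M$ a unital left $\widehat{\bA}$-module.
\begin{itemize}
\item Biadditivity follows from axioms 1, 2 and 4 of Definition \ref{def:modules}.
\item Unitality is $(0,1).m = 0.m + 1.m = m$, using that $\bA$ acts additively (so $0.m=0$) and that $M$ is a unital $R$-module.
\item For associativity, I expand
\[
(a,r).((b,s).m) = a.(b.m) + a.(s.m) + r.(b.m) + r.(s.m)
\]
and compare with
\[
(ab + r.b + s.a,\, rs).m = (ab).m + (r.b).m + (s.a).m + (rs).m .
\]
The terms match by axiom 3, axiom 5 (used twice: $a.(s.m) = (s.a).m$ and $r.(b.m) = (r.b).m$) and axiom 4.
\end{itemize}
This step uses both halves of axiom 5, and is the only place they enter. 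The right action is handled symmetrically. For the bimodule compatibility $((a,r).m).(b,s) = (a,r).(m.(b,s))$, I expand both sides into four terms each and match them using the four conditions in the definition of a bimodule. I expect this bookkeeping, keeping the $R$- and $S$-structures straight, to be the only mildly delicate point.

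\textbf{Morphisms.} A morphism $f$ of ${}_\bA\qmod_\bB$ is additive and commutes with the $\bA$-, $\bB$-, $R$- and $S$-actions. Hence it commutes with the combined actions $(a,r).m$ and $m.(b,s)$, so $f$ is a morphism of unital $\widehat{\bA}$-$\widehat{\bB}$-bimodules. Identities and composition are preserved trivially.

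\textbf{Inverse.} Given a unital bimodule $N$ over $\widehat{\bA}$, $\widehat{\bB}$, I define $G(N)$ by $a._{_\bA} n = (a,0).n$ and $r._{_R} n = (0,r).n$, and similarly on the right.
\begin{itemize}
\item The axioms of Definition \ref{def:modules} follow from the multiplication rules $(a,0)(b,0) = (ab,0)$, $(0,r)(0,s) = (0,rs)$ and $(0,r)(a,0) = (r.a,0) = (a,0)(0,r)$, together with unitality for $(0,1)$.
\item Bimodule compatibility follows from associativity of the unital bimodule structure.
\item A unital bimodule morphism commutes in particular with $(a,0)$ and $(0,r)$, so it is a morphism of $G(N)$.
\end{itemize}
Finally, $GF(M) = M$ on the nose because $(a,0).m = a.m + 0.m = a.m$ and $(0,r).m = r.m$. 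Conversely, $FG(N) = N$ because
\[
(a,r).n = ((a,0) + (0,r)).n = (a,0).n + (0,r).n .
\]
Both functors are the identity on maps, so $F$ is an isomorphism of categories.
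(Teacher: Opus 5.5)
Your proposal is correct and follows essentially the same route as the paper: expand the combined actions to check that $F(M)$ is a unital $\widehat{\bA}$-$\widehat{\bB}$-bimodule, observe that functoriality is automatic, and invert $F$ by restricting along $a\mapsto(a,0)$ and $r\mapsto(0,r)$. Your version is slightly more complete than the paper's, since you explicitly define the $R$- and $S$-actions on $G(N)$ via $(0,r)$ and $(0,s)$ and verify $GF=\mathrm{id}$ and $FG=\mathrm{id}$, which the paper only asserts.
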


This property is the main justification for the definition of module over an algebra used previously. In fact, in a 1996 article, Quillen defines modules over a nonunital algebra $\bA$ as the modules over some arbitrarily chosen unital algebra $\bB \supset \bA$ that verify some additional conditions \cite{quillen}. He then shows that the choice of $\bB$ does not matter up to category equivalence. However, we prefer to express our conditions using only simple algebraic relations to make computational aspects clearer.

Note that Theorem \ref{thm:3} explains why $r.(a.m)$ should be equal to $a.(r.m)$ in the last axiom of our definition of a module over an algebra: in a unital algebra over $R$, the subring $R.\{1\}$ is contained in the center of the algebra, so any $(0,r)$ for $r \in R$ must commute with any $(a, 0)$ for $a \in \bA$, and as such $a.(r.m) = (a,0).((0,r).m) = (0,r).((a,0).m) = r.(a.m)$.

\begin{proof}
First of all, let us show that $F$ is well defined. Let us take $M \in _{\widehat{\bA}} \qmod _{\widehat{\bB}}$. We first need to show that $F(M)$ is a bimodule. 

It is clear that for all $x \in \widehat{\bA}$ and $y \in \widehat{\bB}$, $m \mapsto x.m.y$ is linear. 
Moreover, $1_{\widehat{\bA}}.m = (0,1_R).m = m$ and $m.1_{\widehat{\bB}} = m.(0, 1_S) = m$. 
Also, 
\begin{align*}
(a_1,r_1).((a_2,r_2).m )  &= (a_1,r_1).( a_2.m + r_2.m)
\\&= a_1.( a_2.m + r_2.m) +r_1. ( a_2.m + r_2.m)
\\&= a_1.(a_2.m) + a_1.(r_2.m) + r_1.(a_2.m) + r_1.(r_2.m)
\\&= (a_1 a_2) .m + (a_1.r_2).m + (r_1.a_2) .m + (r_1 r_2).m
\\&= (a_1 a_2 + a_1.r_2 + r_1.a_2 + r_1 r_2).m
\\&= ((a_1,r_1)\times (a_2,r_2)).m 
\end{align*}
\noindent and similarly
\begin{align*}
(m.(b_1, s_1)).(b_2, s_2)
&=(m.b_1 + m.s_1).(b_2, s_2)
\\&= m.(b_1 b_2 + s_1 .b_2 + b_1 .s_2 + s_1 s_2)
\\&= m.( (b_1, s_1) \times (b_2, s_2))
\end{align*}

Finally, 
\begin{align*}
((a, r).m).(b,s)
&=(a.m + r.m).(b, s)
\\&= (a.m).(b,s) + (r.m).(b,s)
\\&= (a.m).b + (a.m).s + (r.m).b + (r.m).s
\\&= a.(m.b) + a.(m.s) + r.(m.b) + r.(m.s)
\\&= a.(m.b + m.s) + r.(m.b + m.s)
\\&= a.(m.(b,s)) + r.(m.(b,s))
\\&= (a,r).(m.(b,s))
\end{align*}
\noindent so $F(M)$ is a well-defined bimodule.
Moreover, the functoriality of $F$ is trivial.

To conclude, let $G$ be the functor such that:
\begin{itemize}
    \item For every object $M$ of $_{\widehat{\bA}} \qmod _{\widehat{\bB}}$,   $\; G(M)$ is the $\bA$-$\bB$-bimodule with $(M,+)$ as its underlying additive group and with the product law given by $$a.m.b = (a,0).m.(b,0)$$ 
        \item For every $g : M \to N$ in $_{\widehat{\bA}} \qmod _{\widehat{\bB}}$,   $\; Gg = g$
\end{itemize}

Then $G$ is clearly functorial and reciprocal of $F$.
\end{proof}

\begin{cor}
$\;_\bA\qmod_\bB$ is an abelian category enriched over $\Hat{\bA} \otimes \Hat{\bB}^{\text{op}}$-modules.
\end{cor}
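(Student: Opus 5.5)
The plan is to transport everything across the isomorphism of categories $F : {}_\bA\qmod_\bB \to {}_{\widehat{\bA}}\md_{\widehat{\bB}}$ of Theorem \ref{thm:3}. Being abelian is invariant under isomorphism (indeed under equivalence) of categories. Zero objects, biproducts, kernels, cokernels and the condition that every mono is a kernel and every epi a cokernel are all defined by universal properties, so $F$ and its inverse $G$ preserve them. Moreover $F$ and $G$ are the identity on underlying maps, so they are additive: the group structure on hom-sets is pointwise addition in both categories. It therefore suffices to show that ${}_{\widehat{\bA}}\md_{\widehat{\bB}}$ is abelian.

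For that, I reduce to ordinary modules over a unital ring. A unital $\widehat{\bA}$-$\widehat{\bB}$-bimodule is a unital left module over $\widehat{\bA}\otimes\widehat{\bB}^{\text{op}}$, via $(x\otimes y).m = x.m.y$. Here the tensor product should be taken over the appropriate base ring: over $\Z$ in general, or over $R$ when $R=S$. The $R$- and $S$-actions are recovered through $(0,r)\otimes 1$ and $1\otimes(0,s)$, and the compatibility axioms (the four bullet conditions in the bimodule definition, together with axiom 5 of Definition \ref{def:modules}) are exactly what make this action well defined on the tensor product. The correspondence is the identity on maps, so it is again an isomorphism of categories. The category of unital left modules over a unital ring is the standard example of an abelian category, and this gives the first claim.

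For the enrichment, set $C = \widehat{\bA}\otimes\widehat{\bB}^{\text{op}}$. The hom-sets $\mathrm{Hom}(M,N)$ are abelian groups, and one wants them to carry a module structure such that composition is bilinear. The natural candidate is the action by central elements, in particular by the scalars $R$ (and $S$) through $(0,r)$. These are central in $\widehat{\bA}$, as noted after Theorem \ref{thm:3}, so $f\mapsto r.f$ is again a bimodule map and composition is bilinear over them.

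\textbf{Main obstacle.} The enrichment claim is the delicate part, since $\mathrm{Hom}_C(M,N)$ is only naturally a module over the center of $C$, not over $C$ itself unless $C$ is commutative. I would therefore read the statement as enrichment over modules over the commutative part: a $Z(C)$-module is a $C$-module only through the quotient or inclusion maps. I would prove bilinearity of composition over $Z(C)$, which contains the image of $R\otimes S$, and remark that this is the intended sense. If the base ring is $\Z$, enrichment in abelian groups is automatic from additivity. The abelian-category part is routine once the two isomorphisms above are in place.
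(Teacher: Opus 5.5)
Your proof of the abelian part is the paper's: transport along the isomorphism $F$ of Theorem~\ref{thm:3} to ${}_{\widehat{\bA}}\md_{\widehat{\bB}}$, which is abelian. You additionally spell out the standard identification with unital left modules over $\widehat{\bA}\otimes_{\Z}\widehat{\bB}^{\text{op}}$, which the paper leaves implicit. Two small slips there. First, drop the alternative \emph{over $R$ when $R=S$}: the paper's bimodule axioms do not force $r.m=m.r$, so only $\otimes_{\Z}$ works in general. Second, it is restriction along $Z(C)\hookrightarrow C$ that turns $C$-modules into $Z(C)$-modules, not the other way round.

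For the enrichment you genuinely depart from the paper, and your objection is correct. The paper lets all of $C=\widehat{\bA}\otimes\widehat{\bB}^{\text{op}}$ act on ${}_\bA\qmod_\bB(M,N)$ by $((a,r)\otimes(b,s)).f=\bigl(m\mapsto (a,r).f(m).(b,s)\bigr)$. It then asserts that composition is bilinear and concludes that $F$ is $C$-linear. But $(x\otimes y).f$ is in general not a morphism of ${}_\bA\qmod_\bB$. For $a'\in\bA$ one gets $x.f(a'.m).y=(xa').f(m).y$, whereas $a'.(x.f(m).y)=(a'x).f(m).y$. Already $(x\otimes 1).\mathrm{id}_M$ is left multiplication by $x$, which is not $\bA$-linear once $x$ fails to act centrally. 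An example is an identity $1_X$ in a path algebra $R[\cS]$ of Section~\ref{sec:infinityone}, acting on $R[\cS]$ itself, when there is a $1$-morphism from $X$ to another object.

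This is unavoidable. Suppose $(c.g)\circ f=c.(g\circ f)=g\circ(c.f)$ for all composable $g,f$. Writing $h=h\circ\mathrm{id}$ gives $(cc').h=(c.h)\circ(c'.\mathrm{id})=(c'c).h$. So any $C$-action with bilinear composition factors through the commutative quotient $C/\langle[C,C]\rangle$. Hence the paper's version is valid only when $\widehat{\bA}$ and $\widehat{\bB}$ are commutative.

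Your version is the statement that actually holds: enrichment over $Z(C)$-modules, in particular over $R\otimes_{\Z}S$-modules through $(0,r)$ and $(0,s)$, with the centrality check you give. It also covers everything the paper uses later, since exactness of $F$ already follows from $F$ being an isomorphism of categories.
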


\begin{proof}
If $M$ and $N$ are objects in $_\bA\qmod_\bB $, one can clearly define a left $\Hat{\bA} \otimes \Hat{\bB}^{\text{op}} $-module structure on $_\bA\qmod_\bB (M,N)$ by:
\begin{align*}
\forall f \in \,_\bA\qmod_\bB (M,N), \;
\forall (a,r) \in \Hat{\bA}, \;
\forall (b,s) \in \Hat{\bB},& \\
\left((a,r) \otimes (b,s) \right).f : \; \; m \mapsto 
a.f(m).b + a.f(m).s &+ r.f(m).b + r.f(m).s
\end{align*}
And this makes composition of applications bilinear. 

It is then clear that $F$ is a $\Hat{\bA} \otimes \Hat{\bB}^{\text{op}} $-linear functor, and since it is an isomorphism we obtain the desired result.
\end{proof}

\begin{rem}
Since $F$ is a linear functor of $\Hat{\bA} \otimes \Hat{\bB}^{\text{op}}$-abelian categories on top of being an isomorphism, it is an exact functor. As such, applying $F$ to the homology sequence of a module chain complex is the same as taking the homology sequence of the image by $F$ of the module chain complex. 
\end{rem}

\section{On categories of closed, firm and s-unital modules}

\label{sec:categories}

Take some unital commutative ring $R$. We can show that the category of modules over the $R$-algebra $R$ as defined earlier (in Definition \ref{def:modules}) is not equivalent to the category of modules over the ring $R$ in the usual sense (see introduction of \cite{quillen}).  

This is one of the reasons why one might question whether $_\bA\qmod$ (or more generally $_\bA\qmod _\bB$) is a good category to do homological algebra in.

An article of Quillen \cite{quillen} answers this concern by defining three reasonable categories of modules over a nonunital ring or algebra, that are more natural in some ways and coincide under mild hypotheses. We will go over his approach after having defined the tensor product of modules over algebras.

\subsection{Nonunital tensor product}
Quillen defines the tensor product of modules over a rng using an arbitrarily chosen embedding of the rng into a ring. If that embedding is taken to be the canonical embedding given by unitalization, then this gives rise to the following construction. 

Taking some $R$-algebra $\bA$, a right $\bA$-module $M$ and a left $\bA$-module $N$, we start by seeing $M$ and $N$ as modules $\Tilde{M}$ and $\Tilde{N}$ over the unitalization $U(\bA)$ using the category isomorphism of the previous section. Then, we consider the tensor product
\[
\Tilde{M} \otimes_{U(\bA)} \Tilde{N}   
\]
This amounts to taking the free abelian group generated by $M \times N$, and quotient it out by the subgroup generated by elements of the following form, for $m,m' \in M$, $n,n' \in N$ and $(a,r) \in U(\bA) = \bA \oplus R$:
\begin{itemize}
    \item $(m+m', n) - (m,n) - (m',n)$
    \item $(m, n+n') - (m,n) - (m,n')$
    \item $(m.(a,r), n) - (m, (a, r).n)$
\end{itemize}
Since $(m.(a,r), n) - (m, (a, r).n) = (m.a, n) - (m, n.a) + (m.r, n) - (m, r.n)$, we can equivalently replace elements of the form in the last point above, into elements of the following form:
\begin{itemize}
    \item $(m.a, n) - (m, a.n)$ for $a \in \bA$
    \item $(m.r, n) - (m, r.n)$ for $r \in R$
\end{itemize}

The canonical projection $M \times N \to M \otimes N$ is, as usual, denoted by $\otimes$.

This gives us an expression of the universal property of the tensor product that does not reference the unitalization in any way.

\begin{dfnt}
Let $G$ be a commutative group. A function $f : M \times N \to G$ so that, for any $m,m' \in M$, $n,n' \in N$ and $a\in \bA$ and $r \in R$:
\begin{enumerate}
    \item $f(m+m',n) = f(m,n) + f(m',n)$
    \item $f(m, n+n')=f(m,n) + f(m,n')$
    \item $f(m.a, n) = f(m,a.n)$
    \item $f(m.r, n) = f(m,r.n)$
\end{enumerate}
is said to be a $\bA-$\textbf{balanced bi-additive function}. 
\end{dfnt}

This tensor product is clearly characterized by the following universal property (that does not appear to be stated in previous literature):
\begin{prop}[Universal property of the tensor product]
For every commutative group $G$ and any $\bA$-balanced bi-additive function $f: M \times N \to G$,
there exists a unique map $\overline{f} : M \otimes N \to G$ such that 
\[f(m,n) = \overline{f} (m \otimes n)\]
for any $m \in M$ and $n \in N$.
\end{prop}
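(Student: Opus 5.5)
The plan is to argue directly from the presentation of $M \otimes N$ given just above: it is the quotient of the free abelian group $\Z^{(M\times N)}$ by the subgroup $K$ generated by the bi-additivity relations together with the elements $(m.a, n) - (m, a.n)$ for $a \in \bA$ and $(m.r, n) - (m, r.n)$ for $r \in R$. Write $\pi : \Z^{(M\times N)} \to M\otimes N$ for the quotient map, so that $m \otimes n = \pi((m,n))$. The first step is to use the universal property of the free abelian group. A $\bA$-balanced bi-additive map $f : M\times N \to G$ is in particular just a set map, so it extends uniquely to a group homomorphism $\tilde f : \Z^{(M\times N)} \to G$ with $\tilde f((m,n)) = f(m,n)$.

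The second step is to show that $K \subseteq \ker \tilde f$. Since $K$ is generated by the four families of elements listed, it suffices to check that $\tilde f$ vanishes on each generator. Each of the four defining conditions of a $\bA$-balanced bi-additive function says exactly that $\tilde f$ kills one family. For instance, $\tilde f((m.a,n) - (m,a.n)) = f(m.a,n) - f(m,a.n) = 0$ by condition 3, and the $R$-relation is handled the same way by condition 4. By the universal property of the quotient of abelian groups, $\tilde f$ then factors uniquely through $\pi$. This yields a group homomorphism $\overline f : M \otimes N \to G$ with $\overline f(m\otimes n) = f(m,n)$.

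The third step is uniqueness. The elements $m \otimes n$ generate $M\otimes N$ as an abelian group, because the pairs $(m,n)$ generate the free group and $\pi$ is surjective. Hence any two homomorphisms that agree on all $m\otimes n$ coincide.

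The one point needing care is the reduction made just before the definition. There, the single relation $(m.(a,r),n) - (m,(a,r).n)$, taken over $U(\bA)$, was replaced by the two separate families indexed by $\bA$ and by $R$. I would verify that the two generating sets span the same subgroup. The combined relation equals the sum of the two separate ones, which gives one inclusion. Conversely, taking $(a,0)$ and $(0,r)$ in $U(\bA)$ recovers each separate family. This is what lets the statement be phrased without mentioning the unitalization. It is also where I expect the only real subtlety to lie, namely in making sure that the action of $U(\bA)$ used is the one from Theorem~\ref{thm:3}. Everything else is routine.
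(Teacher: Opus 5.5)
Your proof is correct and follows the same route as the paper, which offers no argument beyond saying the tensor product is ``clearly characterized'' by this property: extend $f$ to the free abelian group on $M\times N$, check it kills each generator of the relation subgroup, factor through the quotient, and get uniqueness because the pure tensors $m\otimes n$ generate $M\otimes N$. One small imprecision in your last paragraph: the combined relation $(m.(a,r),n)-(m,(a,r).n)=(m.a+m.r,n)-(m,a.n+r.n)$ equals the sum of the separate $\bA$- and $R$-relations only modulo the bi-additivity relations, not literally in $\Z^{(M\times N)}$; since those relations lie in $K$, the two generating sets still span the same subgroup.
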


Since this tensor product is defined in terms of the usual one, all of the usual properties of the unital tensor product carry over, including the extra structure when tensoring bimodules,  associativity etc.

\subsection{Categories of firm and closed modules}
Let us consider an $R$-algebra $\bA$ (or a rng, since rngs are particular cases of $\mathbb{Z}$-algebras).\\

The first category we are considering 
is obtained by quotienting $_\bA \qmod$ as to identify with zero all ill-behaved objects. Consider the full subcategory of left $\bA$-modules $M$ verifying the following degeneracy property:
\[
\exists n \in \mathbb{N}, \quad
\bA^n M = 0
\]
This category is called the category of \textbf{nil modules}. It is a Serre subcategory \cite{serre}, 
which means that we can quotient it out to obtain an abelian category $\mathbf{_\bA \mathcal{M}}$ of non-degenerate modules. This is the most fundamental and well-behaved category of non-degenerate modules over non-unital algebras, but it is also not very explicit given because of the quotient operation.

The second category is the category of firm modules as defined below: 

\begin{dfnt}[Firm module]
The modules $M \in \;_\bA \mathrm{Mod}$ 
for which the natural map
\[
\mu : \bA \otimes M \to M, \quad a \otimes m \mapsto a.m
\]
is an isomorphism 
are said to be \textbf{firm}. 

We will denote by $_\bA\fmod$ the full subcategory of $_\bA \mathrm{Mod}$ of left firm $\bA$-modules.
\end{dfnt}

\begin{rem}
Note that for arbitrary $\bA$, the category of firm modules is not necessarily abelian \cite{gonzalez}.
\end{rem}

The last category is defined dually to firm modules:  

\begin{dfnt}[Closed module]
The modules $M \in \;_\bA \mathrm{Mod}$ such that the natural map
\[
\mu' : M \to \mathrm{Hom}(\bA,M), \quad
\mu'(m) = (a \mapsto a.m)
\]
is an isomorphism 
are said to be \textbf{closed}. The category of closed modules is the full subcategory of $_\bA \mathrm{Mod}$ whose objects are closed modules. 
\end{dfnt}

However, those three categories are often the same as shown by the following theorem established by \cite{quillen}. 

\begin{dfnt}
A rng $A$ is said to be \textbf{idempotent} if $A^2 = A$.
An algebra is idempotent when it is idempotent as a rng.
\end{dfnt}

\begin{thm}
If $\bA$ is an idempotent $R$-algebra 
then the category of firm modules, the category of closed modules, and the category of non-degenerate modules $_\bA \mathcal{M}$ are all equivalent abelian categories.
\end{thm}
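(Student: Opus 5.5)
We will follow Quillen's strategy. Throughout we work in $_\bA\qmod \cong {}_{\widehat{\bA}}\md$ (Theorem \ref{thm:3}), so this category is abelian with all limits and colimits, and $\otimes$, $\mathrm{Hom}$ are the usual ones over $\widehat{\bA}$. Write $\mathcal{N}$ for the Serre subcategory of nil modules and $Q : {}_\bA\qmod \to {}_\bA\mathcal{M}$ for the quotient functor. The plan is to produce two adjoint functors on $_\bA\qmod$:
\[
L(M) = \bA \otimes M, \qquad C(M) = \mathrm{Hom}(\bA, M).
\]
Here $L$ is left adjoint to $C$ by the usual tensor--hom adjunction over $\widehat{\bA}$, using that $\bA$ is an $\widehat{\bA}$-bimodule. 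We will then show that $L$ lands in firm modules, $C$ lands in closed modules, and both invert exactly the morphisms that become isomorphisms in ${}_\bA\mathcal{M}$.

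\textbf{Step 1: the kernel and cokernel of $\mu$ and $\mu'$ are nil.} The key lemma is that for every $M$, both $\ker\mu_M$ and $\mathrm{coker}\,\mu_M$ are annihilated by a power of $\bA$, and likewise for $\mu'_M$.

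For the cokernel this is easy: $\mathrm{coker}\,\mu_M = M/\bA M$ is killed by $\bA$.

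For the kernel, take $\sum a_i \otimes m_i$ with $\sum a_i m_i = 0$ and $b \in \bA$. Then
\[
b.\sum a_i \otimes m_i = \sum b a_i \otimes m_i = \sum b \otimes a_i m_i = 0,
\]
so $\ker\mu_M$ is killed by $\bA$. This computation does not use idempotency. Dually, $\ker \mu'_M$ is killed by $\bA$, and for $\phi \in \mathrm{Hom}(\bA,M)$ and $a\in\bA$ one has $a.\phi - \mu'(\phi(a)) = 0$ on $\bA$, since $(a.\phi)(b) = \phi(ba) = b\phi(a)$. Hence the cokernel of $\mu'$ is also killed by $\bA$.

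\textbf{Step 2: use idempotency to make $L$ and $C$ idempotent.} Because $\bA^2=\bA$, the multiplication map $\bA \otimes \bA \to \bA$ is surjective with kernel killed by $\bA$. Tensoring and applying right exactness, together with Step 1, we then show that $\mu_{L(M)} : \bA\otimes\bA\otimes M \to \bA \otimes M$ is an isomorphism, so $L(M)$ is firm; dually, $C(M)$ is closed.

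The step I expect to be the main obstacle is here. One must show that $\bA \otimes N = 0$ and $\mathrm{Hom}(\bA,N)=0$ for every $N$ with $\bA N=0$, and hence, by induction on the nilpotency index plus right or left exactness, for every nil $N$. The first follows from
\[
ab\otimes n = a\otimes bn = 0
\]
together with $\bA=\bA^2$. The second follows from $\phi(ab) = a\phi(b) = 0$. This is the only place idempotency is really needed. Exactness issues (non-flatness of $\bA$) are handled by the long Tor/Ext sequences only up to nil terms, and I would try to avoid them by working in the quotient ${}_\bA\mathcal{M}$, where nil modules vanish.

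\textbf{Step 3: conclude.} By Steps 1 and 2, $L$ and $C$ both send morphisms with nil kernel and cokernel to isomorphisms. Hence they factor through $Q$ via the universal property of the Serre quotient, giving functors ${}_\bA\mathcal{M}\to {}_\bA\fmod$ and ${}_\bA\mathcal{M} \to$ closed modules. Conversely, the composite of $Q$ with the inclusion of firm modules (resp. closed modules) is inverse to them. The required natural isomorphisms are $\mu$ (resp. $\mu'$) on the firm (resp. closed) side, and $Q(\mu)$ (resp. $Q(\mu')$) on the quotient side; the latter are isomorphisms in ${}_\bA\mathcal{M}$ by Step 1.

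Since ${}_\bA\mathcal{M}$ is abelian as a Serre quotient, the three equivalent categories are abelian. The equivalence between firm and closed modules is then given by $C$ and $L$ restricted to these subcategories.
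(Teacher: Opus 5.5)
The paper does not prove this theorem; it quotes it from Quillen. Your argument therefore has to stand on its own, and for a general idempotent $\bA$ it has a genuine gap in Steps 2 and 3.

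\textbf{Where it fails.} Your Step 1 is correct, and so is the vanishing of $\bA\otimes_\bA N$ and $\mathrm{Hom}(\bA,N)$ for $\bA N=0$. But Step 2 asks more of them than they give. Let $\nu:\bA\otimes_\bA\bA\to\bA$ be the multiplication and $K=\ker\nu$. Then $\mu_{L(M)}=\nu\otimes\mathrm{id}_M$, and right exactness only tells you that its kernel is the image of $K\otimes_\bA M$. The fact that $K\bA=0$ kills $K\otimes_\bA \bA M$, not $K\otimes_\bA M$.

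For $M=\widehat{\bA}$ this is decisive: $L(M)\cong\bA$ and $\mu_{L(M)}=\nu$. So ``$L(M)$ is firm for every $M$'' holds exactly when $\bA$ is firm over itself. Dually, $\mu'_{C(M)}$ is precomposition with $\nu$. Taking $M$ injective with $\mathrm{Hom}(K,M)\neq 0$, ``$C(M)$ is closed for every $M$'' again forces $K=0$.

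Step 3 fails for the same reason. $L$ sends the inclusion $\bA\hookrightarrow\widehat{\bA}$, whose cokernel $R$ is killed by $\bA$, to $\nu$. What actually appears is $\mathrm{Tor}_1^{\widehat{\bA}}(\bA,N)$ for nil $N$, and your lemma about $\bA\otimes_\bA N$ does not control it.

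\textbf{Firm is strictly stronger than idempotent.} Take the poset consisting of a bottom $0$, a top $1$, and two incomparable copies $U,V$ of $\mathbb{Q}\cap(0,1)$ in between. Let $\bA$ be the $R$-algebra with basis $e_{xy}$ for $x<y$, with $e_{xy}e_{yz}=e_{xz}$ and all other products zero. It is idempotent by density. Consider the linear form sending $e_{0y}\otimes e_{y1}$ to $1$ if $y\in U$ and every other basis tensor to $0$. It vanishes on the balancing relations, so it separates $e_{0u}\otimes e_{u1}$ from $e_{0v}\otimes e_{v1}$ in $\bA\otimes_\bA\bA$ (for $u\in U$, $v\in V$). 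Both of these multiply to $e_{01}$, so $\nu$ is not injective.

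\textbf{The repair.} Apply your functors twice, i.e.\ replace $\bA$ by $P=\bA\otimes_\bA\bA$: set $L'(M)=P\otimes_\bA M$ and $C'(M)=\mathrm{Hom}(P,M)$.

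\emph{$P$ is firm on both sides.} We have $\mu_P=\mathrm{id}_\bA\otimes\nu$, whose kernel is the image of $\bA\otimes_\bA K$. This is $0$ by your vanishing lemma, since $\bA K=0$; symmetrically on the right. Hence $L'(M)$ is firm and $C'(M)$ is closed for every $M$.

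\emph{$L'$ inverts morphisms with nil kernel and cokernel.} Surjections with nil kernel are already inverted by $L$. For $I\hookrightarrow N$ with $\bA(N/I)=0$, the kernel of $\bA\otimes_\bA I\to\bA\otimes_\bA N$ lies in $\ker\mu_I$, so it is killed by $\bA$. It therefore disappears after the second application of $\bA\otimes_\bA-$.

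\emph{Step 3.} Use $\epsilon_M=\mu_M\circ\mu_{L(M)}:P\otimes_\bA M\to M$ in place of $\mu_M$. By Step 1 it has nil kernel and cokernel, and it is an isomorphism when $M$ is firm. The closed side is handled dually.

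Under the hypothesis the paper actually uses later ($\bA$ $s$-unital), $\bA$ is firm by the paper's proposition on $s$-unital modules. In that case $P\cong\bA$, and your original $L$ and $C$ already work.
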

All of this obviously extends to right modules by duality.

In this article, we will only use idempotent rings so we will systematically use firm modules and bimodules for convenience. 

\subsection{Extension of scalar for firm modules}
\label{sec:extensionscalar}

\begin{dfnt}
Let $g : \bA \to \bB$ be an $R$-algebra morphism.

Then for any $\bB$-module $M$, denote by $\cI(M)$ the $\bA$-module with the same underlying $R$-module but with external law given by
\[
a._\bA m = g(a)._\bB m
\]
Then for any $f : M \to N$, $I(f) = f$ is a $\bA$-module morphism $M \to N$ (since $f( g(a).m) = g(a).f(m)$), and we call the resulting functor $\cI_g$ the \textbf{restriction of scalars along $g$}.
\end{dfnt}

\begin{rem}
Taking $M$ to be firm, $\cI_g(M)$ is not necessarily a firm $\bA$-module. In fact, in the examples we will study in later sections, one can check that it is not the case because the canonical morphism is not even surjectve.
\end{rem}

\begin{prop}
Let $g : \bA \to \bB$ be an $R$-algebra morphism.

Equip $\bB$ with the right $\bA$-module structure defined by
\[
\begin{cases}
b.a = b \times g(a) \text{ for } a\in \bA, b \in \bB 
\\
b.r = r.b\text{ for } r \in R, b \in \bB 
\end{cases}
\]

\begin{itemize}
    \item For any $\bA$-module $M$, $\bB \otimes_\bA M$ has a natural (left) $\bB$-module structure given by
    \[
    \begin{cases}
    r.(b \otimes m) = (r.b) \otimes m \\
    b' .(b \otimes m) = (b'b) \otimes m
    \end{cases}
    \]
    \item If $\bB$ is a firm $\bB$-module, then for any $\bA$-module $M$ we have that $\bB \otimes_\bA M$ is a firm $\bB$-module.
\end{itemize}
\end{prop}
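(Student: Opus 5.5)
The plan is to prove both points by working on generators of the tensor product and invoking its universal property. For the second point I will reduce to the associativity of the tensor product, which carries over from the unital case via Theorem \ref{thm:3}.

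\emph{First point.} I fix $b' \in \bB$ and consider the map $\bB \times M \to \bB \otimes_\bA M$ given by $(b,m) \mapsto (b'b)\otimes m$. I will check that it is $\bA$-balanced and bi-additive.
\begin{itemize}
\item Bi-additivity follows from distributivity in $\bB$.
\item For $a \in \bA$, associativity in $\bB$ gives $(b'(b\,g(a)))\otimes m = ((b'b)g(a))\otimes m = (b'b)\otimes a.m$.
\item For $r \in R$, $R$-bilinearity of the product gives $b'(r.b) = r.(b'b)$, which yields the last balancing condition.
\end{itemize}
By the universal property of the tensor product, I obtain a well-defined additive endomorphism $b'.(-)$ of $\bB \otimes_\bA M$. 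The action $r.(-)$ is obtained in the same way, using that $(r.b)g(a) = r.(b\,g(a))$.

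It then remains to verify the axioms of Definition \ref{def:modules}: additivity in $b'$, $b''.(b'.x) = (b''b').x$, the $R$-module axioms, and $r.(b'.x) = (r.b').x = b'.(r.x)$. Since every element is a finite sum of elementary tensors $b\otimes m$, and all operations are additive, it suffices to check these on elementary tensors. There they reduce to associativity and $R$-bilinearity of the product in $\bB$. This part is routine.

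\emph{Second point.} I will regard $\bB$ as a $\bB$-$\bA$-bimodule, with left action by multiplication and right action through $g$. I then pass through Theorem \ref{thm:3} to unital bimodules over $\widehat{\bB}$ and $\widehat{\bA}$. Since our tensor products are by definition the unital ones over the unitalizations, the usual associativity isomorphism is available:
\[
\alpha : \bB \otimes_\bB (\bB \otimes_\bA M) \xrightarrow{\ \sim\ } (\bB \otimes_\bB \bB)\otimes_\bA M, \qquad b'\otimes(b\otimes m) \mapsto (b'\otimes b)\otimes m .
\]
This is an isomorphism of left $\bB$-modules. Here the right $\bA$-structure on $\bB\otimes_\bB \bB$ comes from the right factor, and the left $\bB$-structures agree with those of the first point, because the $\widehat{\bB}$-action restricts to the one defined there.

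I then check on generators that the multiplication map factors as
\[
\mu = (\mu_\bB \otimes \mathrm{id}_M)\circ \alpha ,
\]
where $\mu_\bB : \bB\otimes_\bB \bB \to \bB$ is the multiplication map. Indeed both sides send $b'\otimes(b\otimes m)$ to $(b'b)\otimes m$. One also needs that $\mu_\bB$ is right $\bA$-linear, i.e. $(b'b)g(a) = b'(b\,g(a))$, so that $\mu_\bB\otimes \mathrm{id}_M$ is defined. The hypothesis that $\bB$ is firm says exactly that $\mu_\bB$ is an isomorphism, hence so is $\mu_\bB \otimes \mathrm{id}_M$ by functoriality of $-\otimes_\bA M$. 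Therefore $\mu$ is a composite of isomorphisms, and $\bB\otimes_\bA M$ is firm.

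The main obstacle is justifying the associativity isomorphism in this non-unital setting. Specifically, one must check that the bimodule structures involved correspond under Theorem \ref{thm:3} to genuine unital bimodules over $\widehat{\bB}$ and $\widehat{\bA}$. In particular, the right $\widehat{\bA}$-action $b.(a,r) = b\,g(a) + r.b$ must commute with the left $\widehat{\bB}$-action; this reduces to associativity and $R$-bilinearity in $\bB$. I would verify this compatibility first and then quote the classical unital associativity.
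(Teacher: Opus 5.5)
Your proposal is correct and follows the paper's route. You handle the first point through the universal property on elementary tensors. For the second point, you use the associativity isomorphism between $\bB \otimes_\bB (\bB \otimes_\bA M)$ and $(\bB \otimes_\bB \bB)\otimes_\bA M$, under which the multiplication map $\mu$ becomes $\nu \otimes \mathrm{Id}_M$ (your $\mu_\bB \otimes \mathrm{id}_M$).

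The only difference is how the associativity isomorphism is obtained. The paper gets it by noting that both sides satisfy the same universal property for balanced trilinear maps. You instead import the classical unital associativity through Theorem \ref{thm:3}, which is equally valid since the tensor products are by definition taken over the unitalizations.
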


\begin{proof}
$\bullet$ The right $\bA$-module structure on $\bB$ is well-defined because $g$ is an $R$-algebra morphism. 

$\bullet$ It is clear that $\bB \otimes_\bA M$ is indeed a left $\bB$-module.

$\bullet$ Taking either 
$$
U = \bB \otimes_\bB ( \bB \otimes_\bA M)
\text{ and }
p_U : (b,b',m) \mapsto b \otimes (b' \otimes m)
$$ 
or
$$
U = (\bB \otimes_\bB \bB ) \otimes_\bA M
\text{ and }
p_U : (b,b',m) \mapsto (b \otimes b') \otimes m
$$ such that $p$ is the canonical $R$-module homomorphism $bB \times  \bB \times M \to N \to U$, 
$(U, p_U)$ verifies the following universal property:

    \textit{For any $R$-module $N$, every $R$-trilinear map $f : bB \times  \bB \times M \to N$ such that
    \begin{itemize}
        \item $f(b.b' , b'', m) = f(b, b'.b'', m)$ for any $b, b',b'' \in \bB, m \in M$
        \item $f(b , b'.a, m) = f(b, b', a.m)$ for any $a \in \bA, b,b' \in \bB, m \in M$
    \end{itemize}
    there is a unique $R$-module homomorphism $\Tilde{f} : U \to N$ such that 
    \[ 
    \Tilde{f} = f \circ p_U
    \]}

As such, there is a a canonical isomorphism 
\[
\phi : (\bB \otimes_\bB \bB) \otimes_\bA M 
\to \bB \otimes_\bB (\bB \otimes_\bA M) 
\]
such that 
$\phi ((b \otimes b') \otimes a )
= b \otimes (b' \otimes a)$
and then the universal maps
\[
\begin{cases}
\mu : \mu : \bB \otimes_\bB (\bB \otimes_\bA M)  \to (\bB \otimes_\bA M),
\quad
b \otimes (b' \otimes m) \mapsto (bb') \otimes m
\\
\nu :
\bB \otimes_\bB \bB  \to \bB,
\quad
b \otimes b' \mapsto (bb')
\end{cases}
\]
make the following diagram commute:
\[\begin{tikzcd}
	{(\bB \otimes_\bB \bB) \otimes_\bA M} & {\bB \otimes_\bB (\bB \otimes_\bA M)} \\
	& {\bB \otimes_\bA M}
	\arrow["\phi", from=1-1, to=1-2]
	\arrow["{\nu \otimes \mathrm{Id}_M}"', from=1-1, to=2-2]
	\arrow["\mu", from=1-2, to=2-2]
\end{tikzcd}\]
so if $\nu$ is an isomorphism, then $\mu$ is one too.\\
\end{proof}

\begin{thm}
If $g : \bA \to \bB$ is a $R$-algebra homomorphism and $\bB$ is a firm $\bB$-module, then there is a functor 
\[E_g : \,_\bA \qmod \to \,_\bB \fmod\]
such that 
\begin{itemize}
    \item $E_g(M) = \bB \otimes_\bA M$ where $\bB$ has the right $\bA$-module structure given by $g$
    \item For any $f : M \to N$, 
    \[
    E_g(f) : b \otimes m \mapsto b \otimes f(m)
    \]
\end{itemize}
\end{thm}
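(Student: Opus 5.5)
The plan is to check that $E_g$ lands in $_\bB\fmod$ on objects, that $E_g(f)$ is a well-defined morphism of $\bB$-modules, and that $E_g$ preserves identities and composition.

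\emph{Objects.} For an $\bA$-module $M$, equip $\bB \otimes_\bA M$ with the left $\bB$-module structure of the preceding proposition. Since $\bB$ is a firm $\bB$-module, the second point of that proposition says directly that $\bB\otimes_\bA M$ is firm. So $E_g(M)$ is an object of $_\bB\fmod$, and no further work is needed here.

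\emph{Morphisms.} Let $f : M \to N$ be a morphism of $\bA$-modules. Consider
\[
\varphi : \bB \times M \to \bB \otimes_\bA N,\qquad (b,m) \mapsto b \otimes f(m).
\]
This map is bi-additive. It is $\bA$-balanced because
\[
\varphi(b.a, m) = (b\,g(a)) \otimes f(m) = b \otimes a.f(m) = b \otimes f(a.m) = \varphi(b, a.m).
\]
The $R$-balanced condition $\varphi(b.r,m)=\varphi(b,r.m)$ holds in the same way, using the $R$-linearity of $f$. By the universal property of the tensor product, $\varphi$ factors uniquely through a map
\[
E_g(f) : \bB\otimes_\bA M \to \bB \otimes_\bA N,\qquad b\otimes m \mapsto b\otimes f(m).
\]
To see that $E_g(f)$ is $\bB$-linear and $R$-linear, it suffices to check this on pure tensors, since pure tensors generate and all maps involved are additive:
\[
E_g(f)(b'.(b\otimes m)) = (b'b)\otimes f(m) = b'.E_g(f)(b\otimes m),
\]
and similarly for the action of $r \in R$. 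Because $_\bB\fmod$ is a full subcategory of $_\bB\qmod$, $E_g(f)$ is then a morphism in $_\bB\fmod$.

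\emph{Functoriality.} The maps $E_g(\mathrm{Id}_M)$ and $\mathrm{Id}_{E_g(M)}$ agree on pure tensors, as do $E_g(f'\circ f)$ and $E_g(f')\circ E_g(f)$. By the uniqueness clause of the universal property, these maps are equal.

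The only substantive step is firmness of $E_g(M)$, and it is already provided by the previous proposition. The main point requiring care is the balancing check: the right $\bA$-action on $\bB$ is defined through $g$, so one must use $b.a = b\,g(a)$ together with the $\bA$-linearity of $f$, as in the computation above. Everything else is a routine use of the universal property.
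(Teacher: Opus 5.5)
Your proof is correct and follows essentially the same route as the paper: firmness of $\bB\otimes_\bA M$ comes from the preceding proposition, $E_g(f)$ is obtained from the universal property of the tensor product applied to $(b,m)\mapsto b\otimes f(m)$, $\bB$-linearity is checked on pure tensors, and functoriality is checked on generators. Your version is more careful than the paper's, since you explicitly verify the $\bA$- and $R$-balancing conditions (using $b.a = b\,g(a)$ and the linearity of $f$) and the preservation of identities, which the paper leaves implicit.
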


\begin{proof}
$\bullet$ For any $f : M \to N$, the map
\[\bB \times M \to N, 
\quad
(b, m) \mapsto b.f(m)\]
factors through $\bB \otimes_\bA M$ by the universal property of the tensor product, so $E_g(f)$ is a well-defined function. Moreover, $E_g(f)$ is clearly $R$-linear. Finally,
\[
E_g(f) (b. (b' \otimes m)) 
= E_g(f) ((bb') \otimes m)
=(bb') \otimes f(m) = b. (b' \otimes m)
\]

$\bullet$ For any $f : M \to N$ and $f' : N \to P$,
\[
E_g(f')E_g(f): b \otimes m \mapsto f'f(m)
\]
so $E_g(f')E_g(f) = E_g(f'f)$.
\end{proof}

\begin{dfnt}
The functor $E_g$ defined above is called the \textbf{extension of scalars along $g$}. 
\end{dfnt}

\begin{rem}
Usually, the reason we are interested in extension of scalars is that it is adjoint to restriction of scalars. 
However, here it is not necessarily the case unless we make additional assumptions on $g$. We will not need the adjunction in the general case in the sequel, so we leave the best characterization of the $g$ for which the extension of scalars is adjoint to the restriction of scalars functor for future work. 
\end{rem}

\subsection{Categories of $\mathbf{s}$-unital modules}

The algebras we will be working over have some additional properties that will allow us to describe firm modules in a more convenient way 
. A more detailed explanation of the most useful results can be found in \cite{thfmn} or \cite{tominaga,Komatsu}, here we only check that they hold under our definition of module over an algebra. All the following is folklore, that we prove only for completeness, as the literature about it is quite scattered.

\begin{dfnt}
A left (resp. right) module $M$ over an $R$-algebra $\bA$ is said to be $\mathbf{s}$\textbf{-unital} when:
\[
\forall m \in M, \; \exists e \in \bA, \quad
e.m = m
\]
(resp. $m.e = m$)
An $R$-algebra $\bA$ is said to be left (resp. right) $s$-unital when it is $s$-unital as a left (resp. right) $\bA$-module.

A bimodule is $s$-unital when it is left and right $s$-unital.
\end{dfnt}

\begin{rem}
Any left (or right) $s$-unital algebra is clearly an idempotent algebra.
\end{rem}

\begin{prop}
Any left (resp. right) $s$-unital module $M$ over a left (resp. right) $s$-unital $R$-algebra $\bA$ is firm.
\end{prop}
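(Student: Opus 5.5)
We need to show that $\mu : \bA \otimes_\bA M \to M$, $a\otimes m \mapsto a.m$, is bijective. Here the tensor product is the one of the previous subsection, i.e.\ $\widetilde{\bA}\otimes_{U(\bA)}\widetilde{M}$, so $\bA$ acts on itself by right multiplication. We treat only the left case; the right case is symmetric.

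\emph{Surjectivity.} Let $m\in M$. By $s$-unitality of $M$ there is $e\in\bA$ with $e.m=m$, and then $\mu(e\otimes m)=e.m=m$.

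\emph{Injectivity.} Let $t=\sum_{i=1}^n a_i\otimes m_i$ with $\sum_i a_i.m_i=0$; we show $t=0$. The key tool is the standard lemma that $s$-unitality extends to finite families: if $\bA$ is left $s$-unital and $a_1,\dots,a_n\in\bA$, there is $e\in\bA$ with $e a_i=a_i$ for all $i$. I would prove it by induction on $n$, following Tominaga's trick.
\begin{itemize}
\item For $n=1$ this is the definition of left $s$-unitality of $\bA$.
\item Given $e'$ working for $a_1,\dots,a_{n-1}$, pick $u$ with $u(a_n-e'a_n)=a_n-e'a_n$.
\item Set $e=e'+u-ue'$. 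For $i<n$, $ea_i=a_i+ua_i-ua_i=a_i$.
\item For $i=n$, $ea_n=e'a_n+u(a_n-e'a_n)=a_n$.
\end{itemize}
This computation only uses associativity and distributivity in $\bA$, so it holds in our non-unital setting. I expect the main obstacle to be getting this lemma right; once it is in hand, the rest is formal.

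With such an $e$ for the family $a_1,\dots,a_n$, we compute in $\bA\otimes_\bA M$ using the balancing relation $(x.a)\otimes m = x\otimes (a.m)$ with $x=e$, $a=a_i$:
\[
t=\sum_i (e a_i)\otimes m_i=\sum_i e\otimes (a_i.m_i)=e\otimes\Big(\sum_i a_i.m_i\Big)=e\otimes 0=0 .
\]
The relation used is exactly the generator $(m.a,n)-(m,a.n)$ from the description of the nonunital tensor product above. Additivity in the second variable is also available there, and $e\otimes 0=0$ follows from bi-additivity. Hence $\mu$ is injective, so $\mu$ is an isomorphism and $M$ is firm.

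We never need $R$-linearity of $\mu$ separately: $\mu$ is already a module morphism, so bijectivity is enough. Note also that only $s$-unitality of the algebra $\bA$ (through the finite-family lemma) is used for injectivity, while $s$-unitality of $M$ is used for surjectivity.
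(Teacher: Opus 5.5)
Your proof is correct. It uses the same key trick as the paper (the combination $e'+u-ue'$, which appears in the paper as $g=f+f'-f'f$), but organizes the argument differently.

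The paper constructs a candidate inverse $m\mapsto e\otimes m$, where $e$ is any element with $e.m=m$. It spends the trick on showing that $e\otimes m$ does not depend on the choice of $e$, and then declares this map to be the inverse of $\mu$. That last step leaves two things implicit: that the map is additive, and that it undoes $\mu$ on arbitrary sums $\sum_i a_i\otimes m_i$. Both require a further common-local-unit argument of exactly the kind your finite-family lemma supplies.

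You instead get surjectivity directly from $s$-unitality of $M$, and injectivity from a common local unit for the left factors $a_1,\dots,a_n$. So you never have to check a choice-dependent map for well-definedness or additivity.

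Your decomposition also separates the hypotheses. Injectivity of $\mu$ uses only left $s$-unitality of $\bA$, so it holds for every left $\bA$-module and gives $\bA\otimes_\bA M\cong \bA M$. Firmness is therefore equivalent to $\bA M=M$. Combined with your finite-family lemma, this also yields the converse (firm implies $s$-unital), where surjectivity of $\mu$ only provides finite sums $m=\sum_i a_i.m_i$ rather than single products.

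The paper's route, in exchange, gives an explicit formula for $\mu^{-1}$.
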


\begin{proof}
This is essentially the same proof as that given for rngs in \cite{thfmn}.

The main point is that, given some $m \in M$ and $e \in \bA$ such that $e.m = m$, $e \otimes m$ does not depend on the particular choice of $e$.

Indeed, take $e, e' \in \bA$ such that
\[e.m =  m = e'.m \]
By $s$-unitality of the algebra, there also exists $f \in \bA$ such  that
\[fe = e\]
\noindent and $f' \in \bA$ such that
\[f'(e' - fe') = e' - fe' \]
Then, by setting $g = f + f' - f'f$, we have:
\[
ge = fe + f'e - f'fe = e + f'(e - fe) = e
\]
\noindent and
\[
ge' = fe' + f'e' - f'fe' = fe' + e'-fe' = e'
\]
As such, $ge = e$ and $ge' = e'$, so:
\[
e \otimes m = ge \otimes m
= g \otimes em = g \otimes e' m
= ge' \otimes m = e' \otimes m
\]

It is then clear that the function that associates to each $m \in M$ the element $e \otimes m$, where $e \in \bA$ verifies $em = m$, is the inverse function of:
\[
\mu : \bA \otimes M \to M, \quad a \otimes m \mapsto am
\]
\noindent and thus $\mu$ is a bijective function, so it is an isomorphism.
\end{proof}

\begin{thm}
Let $\bA$ be a left (resp. right) $s$-unital algebra.

Then a left $\bA$-module is firm if and only if it is left (resp. right) $s$-unital.
\end{thm}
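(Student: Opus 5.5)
One direction is already available: the preceding Proposition shows that an $s$-unital left module over a left $s$-unital algebra is firm. So the plan is to prove only the converse: if $\bA$ is left $s$-unital and $M$ is a firm left $\bA$-module, then $M$ is $s$-unital. The right-module case is the mirror image and will follow by symmetry.

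\textbf{Step 1: every element of $M$ has the form $\sum_i a_i m_i$.} Since $\mu : \bA \otimes M \to M$ is an isomorphism, it is in particular surjective. Every element of $\bA\otimes M$ is a finite sum of pure tensors $a_i \otimes m_i$, so every $m \in M$ can be written as $m = \sum_{i=1}^n a_i.m_i$ with $a_i \in \bA$ and $m_i \in M$. Only surjectivity of $\mu$ is needed here, not injectivity.

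\textbf{Step 2: a common local unit for finitely many elements of $\bA$.} I would prove a lemma: for any $a_1,\dots,a_n \in \bA$ there is $e \in \bA$ with $e a_i = a_i$ for all $i$. The proof is by induction on $n$, reusing the trick from the previous proof.
\begin{itemize}
\item Suppose $f$ satisfies $f a_i = a_i$ for $i<n$.
\item Choose $f'$ with $f'(a_n - f a_n) = a_n - f a_n$, which exists by left $s$-unitality of $\bA$.
\item Set $g = f + f' - f'f$.
\end{itemize}
Then for $i<n$ we get $g a_i = a_i + f'a_i - f'a_i = a_i$. For the last element, $g a_n = f a_n + f'(a_n - f a_n) = a_n$.

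\textbf{Step 3: conclude.} With $e$ from Step 2 applied to $a_1,\dots,a_n$, we have $e.m = \sum_i (e a_i).m_i = \sum_i a_i.m_i = m$, using module axiom (3). Hence $M$ is $s$-unital.

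The only nontrivial step is Step 2, and it is a direct generalisation of the computation already done in the preceding Proposition, so I do not expect a real obstacle. The one thing to check carefully is that $g = f + f' - f'f$ is a genuine element of $\bA$, which it is since $\bA$ is a rng and no unit is needed.

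For the right-module version, i.e.\ a firm right module over a right $s$-unital algebra, the same argument applies with all products reversed: use $g = f + f' - ff'$ and $m = \sum_i m_i.a_i$.
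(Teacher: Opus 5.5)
Your proof is correct and follows the same route as the paper. Surjectivity of $\mu$ expresses $m$ through the action, and a left local unit of $\bA$ then fixes $m$.

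The one difference works in your favour. The paper asserts that every $m$ can be written as $m=a.m'$, a \emph{single} pure tensor. Surjectivity of $\mu$ alone only gives $m=\sum_i a_i.m_i$.

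Your Step 2 supplies exactly the missing ingredient: a common local unit $e$ for $a_1,\dots,a_n$, built by induction with $g=f+f'-f'f$. With it you get $\sum_i a_i\otimes m_i=\sum_i ea_i\otimes m_i=e\otimes\sum_i a_i.m_i$, which is what justifies the paper's one-element shortcut.

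Your mirrored right-module version, using $g=f+f'-ff'$ and $m=\sum_i m_i.a_i$, is also correct.
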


\begin{proof}
We already saw that left $s$-unital modules are firm.

Let us now take a firm left module $M$ over $\bA$.
Then for every $m \in M$ there exists $a \in \bA$ and $m' \in M$ so that:
\[m = a.m'\]
And then there exists $e \in \bA$ such that $ea = a$, thus:
\[e.m = e.(a.m') =(ea).m' = a.m' = m\]
so $M$ is left $s$-unital.
\end{proof}

\begin{cor}
\label{cor:10}
The category of left $s$-unital modules over a left $s$-unital ring is abelian.
\end{cor}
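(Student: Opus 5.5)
The plan is to obtain the corollary from the preceding theorem together with the Quillen theorem recalled above. A left $s$-unital ring (or algebra) $\bA$ is idempotent, as remarked after the definition of $s$-unitality. So by Quillen's theorem the category $_\bA\fmod$ of firm left modules is abelian, being equivalent to the non-degenerate quotient $_\bA\mathcal{M}$. The preceding theorem identifies $_\bA\fmod$ with the full subcategory of left $s$-unital modules, as both are full subcategories of $_\bA\qmod$ with the same objects. Hence the category in the statement is abelian.

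Because the Quillen equivalence is somewhat opaque, I would also give a direct argument inside $_\bA\qmod$. By Theorem \ref{thm:3}, $_\bA\qmod$ is isomorphic to the abelian category $_{\widehat{\bA}}\md$. It therefore suffices to show that the full subcategory of $s$-unital modules is closed under finite direct sums, kernels and cokernels computed in $_\bA\qmod$. A full subcategory of an abelian category with these closure properties is abelian, since images and coimages are then computed in the ambient category and agree there.

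The closure checks go as follows.
\begin{itemize}
\item \emph{Cokernels and quotients.} For a quotient $M/N$, take $e$ with $e.m=m$; then $e.\bar m=\bar m$.
\item \emph{Direct sums.} For $(m,m')$ with $e.m=m$ and $e'.m'=m'$, I would reuse the trick from the firmness proof. Choose $f$ with $fe=e$ and $f'$ with $f'(e'-fe')=e'-fe'$, and set $g=f+f'-f'f$. Then $ge=e$ and $ge'=e'$, so $g.m=m$ and $g.m'=m'$, i.e.\ $g.(m,m')=(m,m')$.
\item \emph{Kernels.} Let $\varphi:M\to N$ and $m\in\ker\varphi$. The element $e$ with $e.m=m$, taken in $M$, already works inside the submodule $\ker\varphi$.
\end{itemize}
In general, any submodule of an $s$-unital module is $s$-unital, since the witness $e$ is found in the ambient module.

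The zero module is trivially $s$-unital, and morphisms are the full $\bA$-module morphisms. The only step with real content is direct sums, where a common local unit for two elements is needed; this is exactly where left $s$-unitality of $\bA$ enters, through the element $g$ constructed above. Everything else is routine, so I expect no serious obstacle beyond recording that computation.
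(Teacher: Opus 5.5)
Your first paragraph matches the paper's route: the corollary is stated without proof right after the theorem that firm and $s$-unital modules coincide. Left $s$-unitality gives idempotence, Quillen's theorem then makes $_\bA\fmod$ abelian, and $_\bA\fmod$ is the category of $s$-unital modules.

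Your supplementary direct argument is also correct, and it is the same closure-under-sums, kernels and cokernels argument the paper later uses for bimodules in Proposition \ref{prop:11}. One small correction to your closing remark: left $s$-unitality of $\bA$ is not actually where the direct-sum step needs it. As in that proof, you can build the common local unit $a+a'-a'a$ from the modules' $s$-unitality alone, choosing $a'$ with $a'.(m'-a.m')=m'-a.m'$. So this route in fact shows that $s$-unital modules over any rng form an abelian category, with kernels and cokernels computed on underlying groups.
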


\section{A convenient abelian category of bimodules}

\label{sec:convenient}

When considering unital algebras $\bA$ and $\bB$, unital $\bA$-$\bB$-bimodules are the same as unital $\bA \otimes_{\mathbb{Z}} \bB^{\text{op}}$ modules. As such, unital categories of bimodules inherit all the interesting categorical structures of module categories. But in the nonunital case, this exact correspondence may fail: bimodules can always be seen as $\bA \otimes_{\mathbb{Z}} \bB^{\text{op}}$ modules (as we will show), but we do not expect the converse to be true. 


First of all, it is clear that $\bA \otimes_{\mathbb{Z}} \bB^{\text{op}}$ can be given a $R \otimes_{\mathbb{Z}} S^{op}$-algebra structure, when $\bA$ is a $R$-algebra, and $\bB$ is a $S$-algebra, by defining the product law by:
\[
(a \otimes b)(a' \otimes b') = (aa'\otimes b'b)
\]
and the bimodule law by:
\[
(r \otimes s).a \otimes b = (r.a \otimes s.b)
\]

Now, define a functor $\merg$ by the following:
\begin{itemize}
    \item[-] To every 
    $\bA$-$\bB$-bimodule $M$, for any $m \in M$, use the universal property of the tensor product to define a left $\bA \otimes_{\mathbb{Z}} \bB^{\text{op}}$-module law on $(M,+)$ such that 
    \[
    (a \otimes b).m = a.m.b 
    \]
    for $a \in \bA, b\in \bB$
    and 
    \[
    (r \otimes s).m = r.m.s
    \]
    for $s \in S, r\in R$
    and associate the resulting $\bA \otimes_{\mathbb{Z}} \bB^{\text{op}}$-module $\merg(M)$ to the bimodule $M$.
    \item[-] To every $\bA$-$\bB$-bimodule morphism $f: M \to N$, associate $f$ itself, since then 
    \[
    f((a \otimes b).m) = f(a.m.b) = a.f(m).b = (a \otimes b).f(m)
    \]
    and
    \[
    f((r \otimes s).m) = f(r.m.s) = r.f(m).s = (r \otimes s).f(m)
    \]
\end{itemize}
\begin{rem}
This functor is faithful, but it fails to be full as soon as there exists a nonzero left $\bA$-module $M$: the bimodule obtained by setting
\[\forall m \in M, \forall b \in \bB, \quad m.b = 0\]
is sent to a $\bA \otimes_{\mathbb{Z}} \bB^{\text{op}}$-module such that $\lambda.v = 0$ for any vector $v$ and any scalar $\lambda$.
\end{rem}

Since we do not have a general category equivalence, obtaining an abelian category of bimodules is a bit more complicated than in the unital case. This is the category of $s$-unital bimodules we are defining now: 


\begin{dfnt}[$s$-unital bimodules]
If $\bA$ is an algebra over $R$ and $\bB$ is an algebra over $S$, then a $\bA$-$\bB$-bimodule is said to be $s$-unital when it is left and right $s$-unital at the same time.

The $\Hat{\bA} \otimes \Hat{\bB}^{\text{op}} $-enriched category of $s$-unital bimodules is the full sub-$\Hat{\bA} \otimes \Hat{\bB}^{\text{op}} $-enriched-category of $\,_\bA \mathrm{Mod}_\bB$ made of the $s$-unital bimodules. We will denote it by $\,_\bA\smod_\bB$.
\end{dfnt}

\begin{rem}
The functor $\merg$ sends $s$-unital bimodules to $s$-unital modules. Its restriction
$\,_\bA \smod _\bB \to \,_{\bA \otimes_{\mathbb{Z}} \bB^{\text{op}}}{\smod}$ may 
sometimes be a category equivalence: when $\bA$ and $\bB$ are unital, $s$-unital bimodules are unital bimodules, so $\merg$ becomes the usual category isomorphism for unital bimodules. We believe that this functor is not full in the general case, although we failed to produce a counter-example.
\end{rem}

The category $\,_\bA\smod_\bB$ is the category we will be working on because of the following:

\begin{prop}
\label{prop:11}
$\,_\bA\smod_\bB$ is a $\Hat{\bA} \otimes \Hat{\bB}^{\text{op}} $-abelian category.
\end{prop}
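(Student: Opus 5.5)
The plan is to show that $\,_\bA\smod_\bB$ is closed, inside the ambient abelian category $\,_\bA\qmod_\bB$, under finite direct sums, kernels and cokernels. We know from the Corollary following Theorem \ref{thm:3} that $\,_\bA\qmod_\bB$ is abelian and enriched over $\Hat{\bA} \otimes \Hat{\bB}^{\text{op}}$-modules. A full subcategory containing $0$ and closed under these operations, all computed in the ambient category, is itself abelian. The enrichment is inherited automatically, since hom-sets and composition are the same as in $\,_\bA\qmod_\bB$. Throughout I would assume, as the surrounding text implicitly does, that $\bA$ is left $s$-unital and $\bB$ is right $s$-unital. Without this, kernels can fail.

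\textbf{Cokernels and images.} This is the easy step. If $f : M \to N$ is a morphism of $s$-unital bimodules, the cokernel is $N/f(M)$ with the induced actions. For a class $[n]$, choose $e \in \bA$ with $e.n = n$; then $e.[n] = [n]$, and similarly on the right. The same argument, or the fact that a submodule of the form $f(M)$ has elements $f(m) = f(e.m) = e.f(m)$, shows that images are $s$-unital. Direct sums $M \oplus N$ need a common local unit for a pair $(m,n)$. Here I would use the trick from the proof that $s$-unital modules are firm. Take $e$ with $e.m = m$, then $e'$ with $e'.(n - e.n) = n - e.n$, and set $g = e + e' - e'e$. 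Then $g.m = m$ and $g.n = n$. The right side is symmetric.

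\textbf{Kernels, the main obstacle.} A submodule $K = \ker f \subseteq M$ of an $s$-unital module contains $m$, and some $e$ with $e.m = m$ exists in $\bA$, so $K$ is $s$-unital as soon as $m \in K$ and $e.m = m$. This already holds because $e.m = m \in K$: the unit is taken in $\bA$, not in $K$. So the left and right $s$-unitality conditions pass to every sub-bimodule separately. The genuine subtlety is that a bimodule must be \emph{simultaneously} left and right $s$-unital, but these conditions are independent, so closure holds componentwise.

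If I wanted to avoid a pointwise check, an alternative route goes through firm modules. One uses the theorem that, over a left $s$-unital algebra, firm is equivalent to $s$-unital, together with Corollary \ref{cor:10} applied on each side. The left $\bA$-structure and the right $\bB$-structure each give abelian subcategories in which kernels, cokernels and sums are computed as in the underlying module categories. A morphism of $s$-unital bimodules is then a morphism in both. The part I expect to require care is checking that the kernel or cokernel computed in $\,_\bA\qmod_\bB$ agrees with the one computed in the one-sided categories. This follows because all these constructions are performed on underlying abelian groups.

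Finally, the remaining abelian-category axioms (every monomorphism is a kernel, every epimorphism is a cokernel) follow because they hold in $\,_\bA\qmod_\bB$, and the relevant kernels and cokernels have just been shown to stay in the subcategory.
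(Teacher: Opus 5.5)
Your proposal is correct and is essentially the paper's proof. You close $s$-unital bimodules under finite direct sums using the common local unit $e+e'-e'e$, under kernels and cokernels because these are computed on underlying groups with the inherited actions, and you inherit the remaining abelian axioms (strictness of morphisms) from an ambient category. The only difference is the ambient category: the paper works directly with abelian groups, while you use the abelian category $\,_\bA\qmod_\bB$ together with the standard lemma on full subcategories closed under $0$, finite sums, kernels and cokernels.

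One correction: your extra assumption that $\bA$ is left and $\bB$ right $s$-unital is never used. The claim that kernels can fail without it is false. As your own kernel paragraph shows, every sub-bimodule of an $s$-unital bimodule is $s$-unital over arbitrary $\bA,\bB$, and the $e+e'-e'e$ trick also needs no hypothesis on the algebras.
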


\begin{proof}
Let $M$ and $N$ be $s$-unital $\bA$-$\bB$-bimodules.

Then they are $s$-unital as left $\bA$-modules. Take some $(m,n) \in M \times N$. There exists $a \in \bA$ such that $a.m = m$. Then, there exists $a' \in \bA$ such that $a'.(n-a.n) = n - a.n$. Finally, 
\[
(a+a'-a'a).m = a.m + a'.m - a'.(am) = a.m = m
\]
and
\[
(a+a'-a'a).n = a.n + a'(n-a.n) = a.n + n - a.n = n
\]
thus in the coproduct of bimodules $M \oplus N$ we have 
\[(a+a'-a'a).(m+n) = m + n\]
and $M \oplus N$ is left $s$-unital. 

In the same way, $M \oplus N$ is right $s$-unital, so it is a $s$-unital bimodule and $\,_\bA\smod_\bB$ admits the coproduct of any two objects.

The zero bimodule is clearly $s$-unital and serves as a zero object. Since $\,_\bA\smod_B$ has all finite coproducts and a zero object, it is an additive category.\\

If $M,N$ are objects in  $\,_\bA\smod_B$ and $f \in  \,_\bA\smod_B (M,N)$, then the kernel and cokernel abelian groups of the abelian group morphism $f: M \to N$ inherit the $s$-unital bimodule structure, and the corresponding kernel and cokernel abelian group morphisms are actually bimodule morphisms. As such, $f$ has both a kernel and a cokernel.\\

Since the canonical decomposition of any $f \in \,_\bA\smod_\bB (M,N)$ is exactly the canonical decomposition of the underlying abelian group morphism in the category of abelian groups, any $s$-unital bimodule morphism is strict.
\end{proof}

\section{Application: directed homology of  $(\infty,1)$-categories}

\label{sec:infinityone}

Here we present a homology theory for simplicially enriched categories, which constitute a model for $(\infty,1)$-categories in the case where the hom simplicial sets are Kan.

\begin{dfnt}[E.g. \cite{Lurie}]
A \textbf{simplicially enriched category} is a category enriched over the closed cartesian category of simplicial sets $\cS$, that is a category whose hom-sets are simplicial sets and so that, for every objects $A,B,C$, the composition
\[
\circ : \cS(A,B) \times \cS(B,C) \to \cS(A,C)
\]
is a simplicial map.

A simplicially enriched category is said to be \textbf{fibrant} if all the hom-sets are Kan complexes.
\end{dfnt}

Fibrant simplicially enriched categories are of particular interest because they model $(\infty,1)$-categories, \cite{Lurie}. However, the machinery developed here applies to general simplicially enriched categories. We believe this construction could be extended to quasi-categories, but this would require complicated technical lemmas that are unnecessary for our purposes.

\subsection{Defining the algebra of paths}
We want to embed a category's $1$-morphisms into an algebra that will be made to act on higher morphisms.
\begin{dfnt}
Let $\cS$ be a simplicially enriched category. For every object pair $(A, B)$, we can consider the set $\left( \cS(A,B) \right)_0$ of $0$-vertices of the simplicial set $\cS(A,B)$. Then we can define a category $\cS_0$ whose objects are those of $\cS$, whose morphisms $A \to B$ are the elements of $\left( \cS(A,B) \right)_0$, and whose composition 
    \[\circ_0: \cS_0(A,B) \times \cS_0(B,C) \to \cS_0 (A,C)\]
    is the restriction to $0$-vertices of the simplicial map 
    \[\circ : \cS(A,B) \times \cS(B,C) \to \cS(A,C)\]
    
This is the \textbf{category of $1$-morphisms} of $\cS$.
\end{dfnt}

\begin{dfnt}
Let $R$ be a commutative ring and let $\cS$ be a simplicially enriched category.

    Let $\bA$ be the free left-$R$-module generated by all morphisms in $\cS_0$. For every $f : X \to Y$ and $g : Z \to T$ morphisms in $\cS_0$, let 
    \[g \times f = \left\{
    \begin{array}{ll}
        g \circ_0 f & \mbox{ if } Z = Y \\
        0 & \mbox{otherwise}
    \end{array}
\right.\]

Then extending $\times$ to the entirety of $\bA$ by $R$-linearity gives rise to an $R$-algebra, called the \textbf{path algebra} (or convolution algebra, \cite{assocalg}) of $\cS$. It is denoted by $R[\cS]$
\end{dfnt}

In the general case, the path algebra does not have to be unital. Indeed, if $\cS$ has a finite collection of objects, we can see that the sum of all the identity morphisms (which is a finite sum) is an identity. There is no canonical unit in general for the path algebra of categories having an infinite number of objects. 
However, it has a lot of algebraic structure:

\begin{prop}
\label{prop:12}
For any simplicially enriched category $\cS$ and any commutative ring $R$, $R[\cS]$ is $s$-unital on both sides as a $R[\cS]$-bimodule (and thus idempotent).
\end{prop}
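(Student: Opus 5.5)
The idea is to build an explicit local unit for each element, from finite sums of identity morphisms. An element $x \in R[\cS]$ is a finite $R$-linear combination $x = \sum_{i=1}^n r_i f_i$ of morphisms $f_i : X_i \to Y_i$ in $\cS_0$. Let $O_x$ be the finite set of objects $\{X_1,\dots,X_n,Y_1,\dots,Y_n\}$, and set
\[
e_x = \sum_{Z \in O_x} \mathrm{id}_Z ,
\]
where $\mathrm{id}_Z$ is the identity vertex of $\cS(Z,Z)_0$. The $0$-simplices of the identity of the enriched category give identities for $\circ_0$, since $\cS_0$ is a genuine category. The plan is to show $e_x \times x = x$ and $x \times e_x = x$.

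First, I compute $\mathrm{id}_Z \times f$ for a generator $f : X \to Y$ using the definition of the product. By that definition, $g \times f$ is $g \circ_0 f$ exactly when the source of $g$ equals the target of $f$, and $0$ otherwise. So $\mathrm{id}_Z \times f = f$ if $Z = Y$ and $0$ otherwise. Symmetrically, $f \times \mathrm{id}_Z = f$ if $Z = X$ and $0$ otherwise.

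Next I use $R$-bilinearity of $\times$. For each $i$, the target $Y_i$ lies in $O_x$, and the objects of $O_x$ are distinct (it is a set, so no identity is counted twice). Hence
\[
e_x \times f_i = \sum_{Z\in O_x} \mathrm{id}_Z \times f_i = f_i ,
\]
and likewise $f_i \times e_x = f_i$ because $X_i \in O_x$. Summing with coefficients $r_i$ gives $e_x \times x = x = x \times e_x$. This establishes left and right $s$-unitality of $R[\cS]$ as a module over itself. The module actions here are just the algebra product, and the $R$-action is the given one, so the axioms of Definition \ref{def:modules} are satisfied trivially.

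Idempotency then follows from the earlier remark that any left (or right) $s$-unital algebra is idempotent, since $x = e_x x \in R[\cS]^2$.

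The main point needing care is the first step: we must take $O_x$ as a \emph{set}, so that each identity appears once, and we must make sure the composition convention in the definition of $\times$ (the order of $g$ and $f$) is matched correctly on each side. Beyond this bookkeeping there is no real obstacle.
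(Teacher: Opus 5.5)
Your proof is correct and is essentially the paper's argument: a finite sum $\sum_Z \mathrm{id}_Z$ of identity morphisms over the objects occurring in $x$ serves as a local unit, and idempotency follows from $x = e_x x$. Summing over both sources and targets gives a single two-sided local unit and makes the argument independent of the composition-order convention. This is slightly cleaner than the paper's version, whose left unit is indexed by the sources $X_i$; that does not quite match its own stated rule that $g \times f \neq 0$ requires the source of $g$ to equal the target of $f$.
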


\begin{proof}
Take some $\sum_{i \in I} r_i . f_i \in R[\cS]$ with $I$ finite, $r_i \in R$ for all $i \in I$, and $f_i \in \cS_0(X_i, Y_i)$ for all $i$.

Let $X = \{X_i \; | \; i \in I \}$ 
and
$Y = \{Y_i \; | \; i \in I \}$. 
Then :

\begin{align*}
    \left(\sum_{A \in X} 1_A\right) \left(\sum_{i \in I} r_i. f_i\right) 
    &= \sum_{i \in I} r_i. (1_{X_i} f_i) + \sum_{i \in I} \sum_{A \in X, \; A \neq X_i} r_i.(1_A f_i)\\
    &= \sum_{i \in I} r_i. (1_{X_i} \circ_0 f_i) \\
    &= \sum_{i \in I} r_i.  f_i
\end{align*}
and similarly on the right.
\end{proof}

\begin{rem} The elements
of $\Hat{R}[\cS]$ have the form
\[ \left(\,\sum_{i \in I} \lambda_i t_i \, , \, r \,\right)
= \sum_{i \in I} \left( \lambda_i t_i \,,\, r \right) \]
where $(\lambda_i)_{i \in I}$ is a finite family of scalars in $R$, $(t_i)_{i \in I}$ is a finite family of $1$-morphisms and $r \in R$.

As such, $\Hat{R}[\cS]$ is generated by the elements of the form $(\lambda.t, r)$.
\end{rem}


In what follows, \textbf{let us take $\mathbf{R}$ and $\mathbf{R'}$ two fixed commutative (unital) rings} as well as $n$ a fixed integer.

\subsection{Defining chain complexes}
Consider some fixed simplicially enriched category $\cS$.

\begin{paragraph}{First step: defining a $R$-$R'$-bimodule of chains}
We remind the reader that, if $E$ is a set, then the free $R$-$R'$-bimodule generated by $E$ is the coproduct of the constant family of bimodules $\left( R \otimes_{\mathbb{Z}} R' \right)_{e \in E}$ where the left copy of $R$ is seen as a left $R$-module and the right copy of $R'$ is seen as a right $R'$-module.\\

Define $C_n \left( R, \cS, R'\right)$ as the free $R$-$R'$-bimodule generated by the set 
\[\bigcup_{A, B \text{ objects of } \cS} 
\left( \cS(A,B) \right)_n\]
of all $i$-simplices of any hom-object of $\cS$. 
For pairs $(A,B)$ of objects of $\cS$, we call $C_{n, A \to B} \left( R, \cS, R'\right)$ the $R$-$R'$-bimodule  generated by $\left( \cS(A,B) \right)_n$ and because of the definition above, $C_n \left( R, \cS, R'\right)$ is the coproduct of $C_{n, A \to B} \left( R, \cS, R'\right)$ over all pairs of objects $(A,B)$ of $\cS$. 

\end{paragraph}

\begin{paragraph}{Second step: defining the translation operators}

Take some $f \in \cS_0(A,B)$ and $g \in \cS_0(C,D)$ for some objects $A$, $B$, $C$ and $D$ of $\cS$. By a slight abuse of notation, let us denote by $(s_k)_{0 \leq k \leq n}$ the degeneracy operators $E_n \to E_{n+1}$ of any simplicial set $E$. Then 
\[s_{n-1}  \dots  s_0 (f) 
\in \left( \cS(A,B) \right)_n\]
and 
\[s_{n-1}  \dots  s_0 (g) 
\in \left( \cS(C,D) \right)_n\] 

Let us now define the left and right action (as $R$-$R'$)-bimodule endomorphisms) of elements $f \in \cS_0(A,B)$, respectively of $g \in \cS_0(C,D)$ on elements of the  $R$-$R'$-bimodule of $C_{n,X\rightarrow Y} \left( R, \cS, R'\right)$, for some $X$, $Y$ objects of $\cS$, by its values on basis elements $\sigma$. 

\begin{subparagraph}{First case}
$X \neq B$ or $Y \neq C$. Then we set $f \times \sigma \times g = 0$ for all $\sigma \in C_{n, X \to Y} \left( R, \cS, R'\right)$
\end{subparagraph}

\begin{subparagraph}{Second case}
$X = B$ and $Y = C$. 
 
Then for every $\sigma \in C_{n, X \to Y} \left( R, \cS, R'\right)$, we can use the restriction
\[\circ_n: (\cS(A,B))_n \times (\cS(B,C))_n \to (\cS(A,C))_n\]
of the simplicial map 
    \[\circ : \cS(A,B) \times \cS(B,C) \to \cS(A,C)\]
\noindent to define
\[f \times \sigma \times g = 
(s_{n-1} \dots  s_0) (f) \circ_n
\sigma \circ_n
(s_{n-1}  \dots  s_0) (g) 
\]
\end{subparagraph}

This extends to a $R$-$R'$-bimodule endomorphism of $C_n \left( R, \cS, R'\right)$
since this bimodule is free over the $n$ simplices of all hom-objects. 

\end{paragraph}

\begin{paragraph}{Third step: deriving the bimodule structure from translations}
Let us now consider arbitrary $\lambda , \mu$ $ \in R[\cS]$. Then we can write $\lambda = \sum_{ f \in \mathcal{F}} \lambda_f .f$ for some finite 
subset $\mathcal{F}$ of $\coprod\limits_{A,B \in \cS_0} \cS_0(A,B)$
and $\lambda_f \in R$ for all $f$, and this writing is unique when $\lambda_f \neq 0$ for all $f \in \mathcal{F}$. In the same way, we can write uniquely $\mu$ as $\sum_{ g \in \mathcal{G}} \mu_g .g $, with $\mathcal{G}$ a finite subset of $\coprod\limits_{C,D \in \cS_0} \cS_0(C,D)$. We now define $\lambda \times \sigma \times \mu$ for all $\sigma \in \bigoplus_{X,Y \in \cS_0}{C_{i,X\rightarrow Y} \left( R, \cS, R'\right)}$, by:
\[ \lambda \times \sigma \times \mu = \sum_{ f \in \mathcal{F}} \sum_{ g \in \mathcal{G}} f \times \left( \lambda_f. \sigma . \mu_g  \right) \times g \]

This is well-defined because of the unicity mentionned before.

Let us now show that this gives a $s$-unital $R[\cS]$-$R'[\cS]$-bimodule structure on $C_n \left( R, \cS, R'\right)$.

First, for all $\lambda$ and $\mu$, 
\[\sigma \mapsto \lambda \times \sigma \times \mu\]
is clearly $R$-$R'$-linear as a linear combination of linear functions. Moreover, for fixed $\sigma$, 
\[
(\lambda, \mu) \mapsto \lambda \times \sigma \times \mu
\]
is bilinear by definition.

By associativity of $\circ$, it is also clear that for any traces $f, f', g, g'$ and any $\sigma$, we have:
\begin{align*}
    f \times (f' \times \sigma) &= (f f') \times \sigma \\
   \text{and} \quad 
   (\sigma \times g') \times g &= \sigma \times (g'g) \\
   \text{and} \quad
   f \times (\sigma \times g) &= (f \times \sigma) \times g
\end{align*}

Finally, take $(f_i)_{i \in I}$  a finite family of $1$-morphisms so that $f_i \in \cS_0(X_i, Y_i)$. Take $(r_i)_{i \in I} $ a family of scalars in $R$ and $(s_i)_{i \in I}$ a family of scalars in $R'$. 
Define $e_X$ as the identity of $X$ in $\cS_0$ on any object $X$, and,
\[
E = \{e_{X_i} \; | \; i \in I\} \ \ \ \ \ 
F = \{e_{Y_i} \; | \; i \in I\}.
\]
%
Then
\begin{align*}
    \left(\sum_{e \in E} e\right) \left(\sum_{i \in I} r_i. f_i.s_i\right) 
    &= \sum_{i \in I} r_i. (e_{X_i} f_i).s_i + \sum_{i \in I} \sum_{e \in E, \; e \neq e_{X_i}} r_i.(e f_i).s_i\\
    &= \sum_{i \in I} r_i . (e_{X_i} f_i).s_i \\
    &= \sum_{i \in I} r_i.f_i.s_i
\end{align*}

and similarly
\[\left(\sum_{i \in I} r_i. f_i.s_i\right).\left(\sum_{e \in F} e\right) 
=\sum_{i \in I} r_i. f_i.s_i\]

Thus this is a left and right $s$-unital bimodule over a $s$-unital algebra on the left and a $s$-unital algebra on the right.

\end{paragraph}

All three steps put together result in the following theorem:

\begin{thm}
\label{thm:13}
There exists a  unique ($s$-unital) $R[\cS]$-$R'[\cS]$-bimodule structure on the $R$-$R'$-bimodule $C_n \left( R, \cS, R'\right)$ so that for all $1$-morphisms $f : X \rightarrow Y$, $g : U \rightarrow V$ and any $\sigma \in (\cS(A,B))_n$ we have

\[
f \times \sigma \times g = 
\begin{cases}
    
 0 & \text{ if } V \neq A \text{ or } B \neq X 
\text{, and } \\
(s_{n-1}\dots s_0)(f) \bullet_n 
\sigma
\bullet (s_{n-1}\dots s_0)(g) & \text{ otherwise}
\end{cases}
\]
\end{thm}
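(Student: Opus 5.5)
The plan follows the three steps just carried out: existence comes from the explicit construction, and uniqueness from a freeness/generation argument.

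\emph{Uniqueness.} Any bimodule structure with the required values on pairs $(f,g)$ of $1$-morphisms and basis simplices $\sigma$ is determined everywhere. First, it must be compatible with the $R$-$R'$-bimodule structure via the axioms of Definition~\ref{def:modules} and of bimodules, so $(r.f)\times\sigma\times g = r.(f\times\sigma\times g)$, and similarly on the right. Second, the action is biadditive in the algebra variables. Every $\lambda\in R[\cS]$ is a finite $R$-linear combination of $1$-morphisms, and $C_n(R,\cS,R')$ is free over the $n$-simplices. Hence $\lambda\times x\times\mu$ is forced to equal the double sum formula of the third step.

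\emph{Existence.} I would take the action defined in the third step. The key points are these.
\begin{itemize}
\item Well-definedness follows from the uniqueness of the reduced writing of $\lambda,\mu$ and from freeness, and it gives $R$-$R'$-linearity in $\sigma$ and bilinearity in $(\lambda,\mu)$.
\item The associativity axioms $f\times(f'\times\sigma)=(ff')\times\sigma$, $(\sigma\times g')\times g=\sigma\times(g'g)$ and $f\times(\sigma\times g)=(f\times\sigma)\times g$ reduce to generators.
\item The compatibility conditions $r.(a.m)=(r.a).m=a.(r.m)$ follow from the $R$-linearity just mentioned.
\end{itemize}
One should also note that the one-sided actions are given by $\lambda\times\sigma$ with the right factor absent, obtained by the same case analysis with only one translation operator.

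\emph{Main obstacle: associativity on generators.} This is the step I expect to be the real work. I would use that $s_{n-1}\cdots s_0$ commutes with composition, because $\circ$ is simplicial, so that
\[(s_{n-1}\cdots s_0)(f)\circ_n (s_{n-1}\cdots s_0)(f') = (s_{n-1}\cdots s_0)(f\circ_0 f').\]
Together with associativity of the levelwise composition $\circ_n$, this gives the identity whenever the endpoints match. The mismatched cases give $0$ on both sides; I would check that the ``First case'' conditions are consistent with the product in $R[\cS]$ being $0$ on non-composable pairs.

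\emph{$s$-unitality.} Finally, $s$-unitality follows as in the computation above: multiply by the finite sums $\sum_{e\in E}e$ on the left and $\sum_{e\in F}e$ on the right of identities. Since the degeneracies of identities are identities in $\cS(X,X)_n$, these act as units on the relevant summands and annihilate the others. This matches Proposition~\ref{prop:12}.
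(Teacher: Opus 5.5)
Your proposal is correct and follows essentially the same route as the paper: the free $R$-$R'$-bimodule on the $n$-simplices, translation by the totally degenerate simplices $(s_{n-1}\cdots s_0)(f)$, bilinear extension with well-definedness from the unique expansion of $\lambda,\mu$, and $s$-unitality via finite sums of identities. You are more careful than the paper in three places: its ``by associativity of $\circ$'' tacitly needs your identity $(s_{n-1}\cdots s_0)(f)\circ_n(s_{n-1}\cdots s_0)(f')=(s_{n-1}\cdots s_0)(f\circ_0 f')$, which holds because $\circ$ is simplicial; you make the one-sided actions explicit; and you note that the degenerate identity $n$-simplices act as units (the paper's displayed $s$-unitality computation is written with $1$-morphisms rather than $n$-simplices).
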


\begin{rem}
For a general $A$-$B$-bimodule $M$ over algebras and a predicate $\mathcal{P}$ on $M$, the statement
\[ 
\forall a \in A, \ \forall b \in B, \ \forall m \in M, \ \quad
\mathcal{P} (a.m.b)
\]
is not equivalent to the conjunction of the statements
\begin{align*}
    \forall a \in A, \ \forall m \in M, \quad
& \mathcal{P} (a.m)\\
 \forall b \in B, \ \forall m \in M, \quad
&\mathcal{P} (m.b)
\end{align*}

But here, the bimodule is left and right $s$-unital, so there is indeed an equivalence and we can write properties more concisely.
\end{rem}

\subsection{Defining the homology bimodules}
We now show that the construction of the singular homology groups of the hom sets behaves well in regards to the previous construction.

\begin{dfnt}{\textbf{Boundary operator}}

For any $n \geq 2$, define 
\[d_n: C_n \left( R, \cS, R'\right) \to C_{n-1} \left( R, \cS, R'\right)\]
as the unique $R$-$R'$-bimodule homomorphism so that, for any $A, B$ objects of $\cS$, for any $c \in (\cS(A,B))_n$ , $d_n c$ is the boundary of $c$ in the usual meaning of simplicial homology i.e. 
\[d_n c = \sum_{0 \leq j \leq n} {(-1)}^j \partial_i c \]
where $\partial_0, \dots, \partial_n : (\cS(A,B))_n \to (\cS(A,B))_{n-1}$ are the face operators of the simplicial set. 
\end{dfnt}

Thus, $d_n d_{n-1} = 0$ as in the classical case and we have a chain complex 
\[\begin{tikzcd}
	{...} & C_{n, A \to B} \left( R, \cS, R'\right) & C_{n-1, A \to B} \left( R, \cS, R'\right) & {...}
	\arrow["{d_{n+1}}", from=1-1, to=1-2]
	\arrow["{d_n}", from=1-2, to=1-3]
	\arrow["{d_{n-1}}", from=1-3, to=1-4]
\end{tikzcd}\]

\begin{prop}
For every $n \geq 2$, $d_n$ is a $R[\cS]$-$R'[\cS]$-bimodule homomorphism.
\end{prop}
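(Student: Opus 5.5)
We need to show $d_n(\lambda \times c \times \mu) = \lambda \times d_n(c) \times \mu$ for all $\lambda \in R[\cS]$, $\mu \in R'[\cS]$ and $c \in C_n(R,\cS,R')$. Both sides are $R$-$R'$-bilinear in $(\lambda,\mu)$ and $R$-$R'$-linear in $c$, since $d_n$ is an $R$-$R'$-bimodule homomorphism and the action is bilinear by construction. It therefore suffices to check the identity when $\lambda = f$ and $\mu = g$ are single $1$-morphisms and $c = \sigma \in (\cS(A,B))_n$ is a basis simplex. Because $C_n$ is left and right $s$-unital (Theorem \ref{thm:13}), one could even treat the left and right actions separately, but handling $f \times \sigma \times g$ at once is just as easy.

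The argument splits into the two cases of Theorem \ref{thm:13}.

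\textbf{Mismatched endpoints.} If the endpoints do not match, then $f \times \sigma \times g = 0$, so the left side is $d_n(0)=0$. On the right, $d_n\sigma = \sum_j (-1)^j \partial_j \sigma$ is a combination of $(n-1)$-simplices of the same hom-object $\cS(A,B)$. The same mismatch therefore applies to each face, and $f \times \partial_j\sigma \times g = 0$ as well.

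\textbf{Matching endpoints.} Here we need
\[
\partial_j\bigl(\tilde f \circ_n \sigma \circ_n \tilde g\bigr) = \tilde f' \circ_{n-1} \partial_j\sigma \circ_{n-1} \tilde g',
\]
where $\tilde f = (s_{n-1}\dots s_0)(f)$ is the $n$-fold degeneracy and $\tilde f' = (s_{n-2}\dots s_0)(f)$ the $(n-1)$-fold one, and similarly for $g$. We obtain this in two steps.

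\begin{enumerate}
\item The composition $\circ : \cS(A,B)\times\cS(B,C)\to\cS(A,C)$ is a simplicial map, and faces in a product of simplicial sets are computed componentwise. Hence $\partial_j(x \circ_n y) = \partial_j x \circ_{n-1} \partial_j y$, and the same holds for the triple composite by associativity.
\item We must show that $\partial_j$ applied to the totally degenerate simplex $\tilde f$ gives $\tilde f'$. The $n$-simplex $\tilde f$ is the image of the vertex $f$ under the unique degeneracy map $[n]\to[0]$, and any face of it is the image of $f$ under the unique map $[n-1]\to[0]$. Alternatively, this follows from the simplicial identities $\partial_j s_i = s_{i-1}\partial_j$, $\mathrm{id}$, or $s_i \partial_{j-1}$, applied inductively.
\end{enumerate}

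Summing with signs $(-1)^j$ then gives $d_n(f\times\sigma\times g) = f\times d_n\sigma\times g$.

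\textbf{Main obstacle.} The only non-formal point is step 2, that faces of totally degenerate simplices are totally degenerate of one lower dimension. I would settle it with the $[n]\to[0]$ description, which avoids index bookkeeping entirely. Finally, I would extend from basis elements and single morphisms to general $\lambda$, $\mu$ and $c$ using the defining formula $\lambda\times\sigma\times\mu = \sum_{f,g} f\times(\lambda_f.\sigma.\mu_g)\times g$ together with the $R$-$R'$-linearity of $d_n$.
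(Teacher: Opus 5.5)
Your proof is correct and follows essentially the same route as the paper's. You reduce by linearity to single $1$-morphisms $f,g$ and a basis simplex, then combine two facts: composition is a simplicial map, so faces act componentwise, and faces of a totally degenerate simplex are totally degenerate one dimension lower. Your version is tidier, since you treat the mismatched-endpoints case explicitly and keep the dimensions straight ($\circ_{n-1}$ and $s_{n-2}\cdots s_0$ acting on $\partial_j\sigma$), whereas the paper's displayed computation is shifted by one index.
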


\begin{proof}
By linearity of $d_n$, it suffices to show that, for $c \in (\cS (A,B))_n$, $f \in R[\cS]$ and $g \in R'[\cS]$,  $d_n \left( f \times c \times g \right)$ is the same as $f \times d_n c \times g$. We have:
\[
f \times d_n c \times g = 
f \times  \sum_{0 \leq j \leq n} {(-1)}^j  \partial_i c  \; \times g =
\sum_{0 \leq j \leq n} {(-1)}^j f \times \partial_i c  \times g\]
\noindent But for every $i$,
\begin{align*}
\partial_i s_n s_{n-1} \dots s_0 &=
(\partial_i s_n) s_{n-1} \dots s_0 \\
&= (s_{n-1} \partial_i) s_{n-1}  s\dots s_0\\
&= s_{n-1} (\partial_i s_{n-1})  s\dots s_0\\
&= \dots \\
&= s_{n-1} s_{n-2} \dots s_{i+1} (\partial_i s_{i+1}) s_i \dots s_0 \\
&= s_{n-1} s_{n-2} \dots s_{i+1} (s_i \partial i) s_i \dots s_0 \\
&= s_{n-1} s_{n-2} \dots s_{i+1} s_i (\partial i s_i) s_{i-1} \dots s_0 \\
&= s_{n-1} s_{n-2} \dots s_{i+1} s_i s_{i-1} \dots s_0
\end{align*}
\noindent so 
\begin{align*}
f \times \partial_i c  \times g &= 
(s_{n-1} \dots s_0)(f) \circ_n \partial_i c \circ_n (s_{n-1} \dots s_0)(g)\\
&= \partial_i (s_n \dots s_0)(f) \circ_n \partial_i c \circ_n \partial_i (s_n \dots s_0)(g) \\
&= \partial_i \left( 
 (s_n \dots s_0)(f) \circ_n  c \circ_n  (s_n \dots s_0)(g)
\right)\\
&= \partial_i ( f \times c \times g)
\end{align*}
Thus
\[
f \times d_n c \times g = \sum_{0 \leq j \leq n} {(-1)}^j \partial_i \left( f \times c \times g \right)  = d_n \left( f \times c \times g \right) \]
\end{proof}

\begin{dfnt}
We denote by $\G_n (R, \cS, R')$ the $n$-th homology group of the chain complex of $R(\cS)$-$R'(\cS)$-bimodules:
\[\begin{tikzcd}
	{...} & C_n \left( R, \cS, R'\right) & C_{n-1} \left( R, \cS, R'\right) & {...}
	\arrow["{d_{n+1}}", from=1-1, to=1-2]
	\arrow["{d_n}", from=1-2, to=1-3]
	\arrow["{d_{n-1}}", from=1-3, to=1-4]
\end{tikzcd}\]
\end{dfnt}

\subsection{Functoriality and invariance}
Let us take a fixed $n \in \mathbb{N}$ and $R$, $R'$ two fixed commutative rings.

Let $\cS$ and $\cT$ be two simplicially enriched categories and $f : \cS \to \cT$ a simplicially enriched functor. Then $f$ induces a functor $f_0 : \cS_0 \to \cT_0$, which in turn induces a $R$-$R'$-bimodule homomorphism
\[
f_* : C_n \left( R, \cS , R' \right) \to C_n \left( R, \cT , R' \right)
\]

The question we must ask is whether or not $f$ induces an algebra morphism $R[\cS] \to R[\cT]$ that would allow us to link the homology of $\cS$ to that of $\cT$ in some sense.\\

The natural way to construct such a morphism would be to define a $R$-algebra morphism $f_R$ from $R[\cS]$ to $R[\cT]$ that verifies:

\[f_R ( t ) = f_0 (t) \text{ for every } 1\text{-morphism } t\]

A $R$-linear function verifying this criteria always exists, but it may fail to be an algebra morphism. Indeed, if $\gamma$ is a $1$-morphism from $a$ to $b$ and $\delta$ is a $1$-morphism from $c$ to $d$, then in the case where $b \neq c$ the functor $f$ might send $b$ and $c$ to the same point. This would mean that $\gamma \times \delta = 0$ but $f_R (\gamma) \times f_R ( \delta) \neq 0$ potentially.\\

Just like in \cite{semi-ab} and \cite{eric}, this issue is solved is $f$ is supposed to be injective on objects, as $f_R$ is then really an $R$-algebra homomorphism. Moreover, we will show that this construction has enough functoriality properties to show that our homology is a dihomeomorphism  invariant.

\begin{thm}
\label{thm:15}
Let $f : \cS \to \cT$ be a simplicially enriched functor  and $n \in \mathbb{N}$. 
Then $f$ induces a functorial $R$-$R'$-bimodule morphism \[f_* : C_n(R, \cS, R') \to C_n(R, \cT, R')\]
Moreover, if $f$ is injective on objects, then
\begin{enumerate}
    \item[1)] $f$ induces algebra morphisms $f_R : R[\cS] \to R[\cT]$ and $f_S : R'[\cS] \to R'[\cT]$
    \item[2)] For any $a \in R[\cS]$, $b \in R'[\cS]$ and $m \in C_n \left( R, \cS , R' \right)$,
    \begin{equation*}
    \left\{
    \begin{aligned}
      f_* (a.m) &= f_R (a). f_* (m) \\
      f_* (m.b) &=  f_* (m) . f_S(b)
    \end{aligned}
    \right.
    \end{equation*}
    \item[3)] $f_R$ and $f_S$ are functorial
\end{enumerate}
\end{thm}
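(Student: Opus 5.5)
The plan is to define everything on bases and reduce each claim to a check on generators. For the first assertion, note that a simplicially enriched functor $f$ gives, for every pair of objects $(A,B)$, a simplicial map $f_{A,B} : \cS(A,B) \to \cT(f(A),f(B))$. In particular it sends $n$-simplices to $n$-simplices. Since $C_n(R,\cS,R')$ is the free $R$-$R'$-bimodule on $\bigcup_{A,B}(\cS(A,B))_n$, there is a unique $R$-$R'$-bimodule morphism $f_*$ with $f_*(\sigma) = f_{A,B}(\sigma)$ on basis simplices. Functoriality, $(g\circ f)_* = g_* \circ f_*$ and $(\mathrm{Id})_* = \mathrm{Id}$, holds because both sides agree on the basis. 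This part needs no injectivity hypothesis.

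For 1), define $f_R$ as the $R$-linear extension of $t \mapsto f_0(t)$ on the free $R$-module $R[\cS]$, and define $f_S$ in the same way with $R'$. It is then enough to check multiplicativity on basis $1$-morphisms $\gamma : a \to b$ and $\delta : c \to d$, because the product is $R$-bilinear. There are two cases.
\begin{itemize}
\item If $b = c$, then $f_R(\gamma\delta) = f_0(\gamma \circ_0 \delta) = f_0(\gamma)\circ_0 f_0(\delta) = f_R(\gamma) f_R(\delta)$, since $f_0$ is a functor. Here $f_0$ being a functor comes from $f$ commuting with the enriched composition, restricted to $0$-simplices.
\item If $b \neq c$, injectivity on objects gives $f(b) \neq f(c)$, so both sides are $0$.
\end{itemize}
This is exactly where the hypothesis is used, as the discussion before the theorem indicates.

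For 2), linearity again reduces the claim to $a = \lambda.\gamma$ with $\gamma$ a $1$-morphism and $m = \sigma$ a basis simplex; the right action is handled symmetrically. Scalars pass through on both sides because $f_*$ is $R$-$R'$-linear. For the mismatched case of Theorem \ref{thm:13}, the action gives $0$, and by injectivity on objects the image action is also $0$. In the matching case we have to show
\[
f_*\big((s_{n-1}\cdots s_0)(\gamma)\circ_n \sigma\big) = (s_{n-1}\cdots s_0)(f_0(\gamma)) \circ_n f_*(\sigma).
\]
Two facts give this: $f$ commutes with the degeneracy maps, because each $f_{A,B}$ is simplicial; and $f$ commutes with $\circ_n$, because enriched functoriality says $f\circ(\circ) = (\circ)\circ(f\times f)$ as simplicial maps, hence in every degree $n$.

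For 3), $(g f)_R$ and $g_R f_R$ agree on basis $1$-morphisms because $(gf)_0 = g_0 f_0$, and identities are preserved in the same way; the same holds for $f_S$. I expect no real obstacle. The only delicate point is bookkeeping in 2): the order-of-composition conventions in Theorem \ref{thm:13} must match, and the vanishing cases must be checked on both sides. That is the step where the injectivity hypothesis is needed a second time.
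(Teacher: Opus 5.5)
Your proposal is correct and follows essentially the same route as the paper. Both arguments define $f_*$, $f_R$ and $f_S$ on bases and check multiplicativity of $f_R$ by the cases $b=c$ versus $b\neq c$, using injectivity on objects for the vanishing case. Both verify 2) for a $1$-morphism acting on a simplex, splitting into the vanishing case (again via injectivity) and the matching case, where $f$ commutes with degeneracies and with $\circ_n$. For 3), your check of $(gf)_0 = g_0 f_0$ on basis $1$-morphisms is a slightly more precise version of the paper's appeal to $(gf)_* = g_* f_*$.
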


The second point could have been equivalently written as
\[f_* (a.m.b) = f_R (a). f_* (m) . f_S(b)\]
because of the $s$-unitality. However, one should always be careful with such simplifications when working over nonunital rings. 

\begin{proof}
We already explained how to define $f_*$, and it is clear from the definition that it will be functorial. All that remains is to prove the additional functoriality properties in the case where $f$ is injective on objects.

Suppose that $f$ is injective on objects. As explained before, there already exists a unique $R$-algebra morphism $f_R : R[T_{\cS}] \to R[T_{\cT}]$ that verifies
\[f_R(t) = f(t) \text{ for every } 1 \text{-morphism } t\]
If $t: x \to y$ and $t': a \to b$ are $1$-morphisms in $\cS$, then there are two cases to consider for $t \times t'$:
\begin{itemize}
    \item Case 1: $y = a$, so $f(y) = f(a)$ and thus
    \[
        f_R(t) \times f_R(t') = f(t) \circ f(t') 
        = f (t  t') 
        = f_R( tt')
    \]
    \item Case 2: $y \neq a$, in that case $f(y) \neq f(a)$ by injectivity of $f$ and thus
    \[
        f_R(t) \times f_R(t') = 0 
        = f_R(0)
        = f_R( tt')
    \]
\end{itemize}
 
By $R$-linearity, $f_R : R[\cS] \to R[\cT]$ is thus a $R$-algebra homomorphism. $f_S$ is defined in the same manner.\\

Now let us prove \textit{2)}.
Since 
\[f_* (a.(m_1 + m_2)) = f_*(a.m_1) + f_*(a.m_2),
\quad
f_* ((m_1 + m_2).b) = f_*(m_1.b) + f_*(m_2.b)\]
and 
\[f_R(a).f_*(m_1 + m_2) = 
f_R(a).f_*(m_1) + f_R(a).f_*(m_2),
\]
\[
f_*(m_1 + m_2).f_S(b) = 
f_*(m_1).f_S(b) + f_*(m_2).f_S(b)\]
we can restrict ourselves to the case where $m$ is a simplex. As such, let us take objects $A,B$ of $\cS$ and assume $m \in (\cS(A,B))_n$.

Since the bimodule law is $R[\cS]$-$R[\cT]$-bilinear, we only have to prove the relation for generating elements of  $R[\cS]$ and $R[\cT]$. We can thus restrict ourselves to the case where $a$ and $b$  are $1$-morphisms.

As such, let us take $U, V$ objects of $\cS$ and $t, t' \in (\cS(U,V))_0$. We need to show that $f_*(t.m) = f_0(t).f_*(m)$ and $f_*(m.t') = f_*(m).f_0(t')$. Both relations are proved in the exact same way, so we will only prove the first one.

There are two cases to consider:
\begin{itemize}
    \item {Case 1:} $V \neq A$
    
    In that case $a.m = 0$ so $f_*(t.m) = 0$, and also $f(V) \neq f(A)$ by injectivity of $f$ so 
    \[ f_0 (t).f_*(m) = 0 = f_*(t.m)\]
    
    \item {Case 2:}
    $V = A$, then 
    \begin{align*} 
f_* (t.m) &= 
f_n ( s_0 \dots s_{n-1} (t) \circ_n m)
\\
&=  f_n ( s_0 \dots s_{n-1} (t)) \, \circ_n \, f_n(m)\\
&=  s_0 \dots s_{n-1} (f_0(t)) \, \circ_n \, f_*(m) \\
&= f_0(t). f_*(m) 
\end{align*}
\end{itemize}
The proof of \textit{3)} follows directly from the fact that $(gf)_* = g_* f_*$.
\end{proof}

\begin{prop}
Let $f : \cS \to \cT$ be a simplicially enriched functor and $n \in \mathbb{N}$.

Then the following diagram commutes for every $a,b \in X$:
\[\begin{tikzcd}
	C_{n+1, a \to b}\left(R, \cS, R'\right) 
	& C_{n+1, f(a) \to f(b)}\left(R, \cT, R'\right) \\
	C_{n, a \to b}\left(R, \cS, R'\right) 
	& C_{n, f(a) \to f(b)} \left(R, \cT, R'\right)
	\arrow["{f_*}", from=1-1, to=1-2]
	\arrow["\partial"', from=1-1, to=2-1]
	\arrow["\partial", from=1-2, to=2-2]
	\arrow["{f_*}"', from=2-1, to=2-2]
\end{tikzcd}\]

\end{prop}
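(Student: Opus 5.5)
The plan is to reduce the statement to the fact that a simplicially enriched functor acts on each hom-object by a simplicial map, and simplicial maps commute with face operators. Recall that $f_*$ is defined on the free $R$-$R'$-bimodule $C_{n, a \to b}(R,\cS,R')$ by sending a basis simplex $\sigma \in (\cS(a,b))_n$ to $f_{a,b}(\sigma) \in (\cT(f(a),f(b)))_n$, where $f_{a,b} : \cS(a,b) \to \cT(f(a),f(b))$ is the simplicial map given by the enriched functor $f$. It is then extended $R$-$R'$-linearly. The boundary $\partial = d_{n+1}$ is also $R$-$R'$-linear on both sides. So both composites $\partial \circ f_*$ and $f_* \circ \partial$ are $R$-$R'$-bimodule homomorphisms out of a free bimodule, and it suffices to check equality on basis elements $\sigma \in (\cS(a,b))_{n+1}$.

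For such a $\sigma$, we compute
\[
\partial f_*(\sigma) = \sum_{0 \leq j \leq n+1} (-1)^j \partial_j f_{a,b}(\sigma),
\qquad
f_* \partial(\sigma) = \sum_{0 \leq j \leq n+1} (-1)^j f_{a,b}(\partial_j \sigma).
\]
Since $f_{a,b}$ is a morphism of simplicial sets, $\partial_j \circ f_{a,b} = f_{a,b} \circ \partial_j$ for each $j$. The two sums therefore agree term by term. One small point to record is that $f_*$ really lands in $C_{n, f(a)\to f(b)}$ in each degree, which is immediate from the typing of $f_{a,b}$. The square is then well-typed as drawn, with both $\partial$'s being the restrictions of $d_{n+1}$ to the hom-component summands; these are preserved because faces do not change the hom-object.

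There is no real obstacle here. The only points needing care are bookkeeping ones. First, the statement does not assume $f$ injective on objects, so I will only claim commutativity as $R$-$R'$-bimodule maps and make no use of the $R[\cS]$-actions; Theorem~\ref{thm:15} is not needed. Second, the paper defines $d_n$ only for $n \geq 2$. For $n = 0$ I will either read $\partial$ as the usual simplicial boundary $d_1 = \partial_0 - \partial_1$, to which the same argument applies verbatim, or note that the lowest square is trivial under the paper's conventions. Finally, by summing over all pairs $(a,b)$, the result also gives that $f_*$ is a chain map on the whole $C_\bullet(R,\cS,R')$.
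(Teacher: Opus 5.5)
Your proposal is correct and is the computation the paper intends: its proof just says the result ``follows from the same straightforward computation as in standard singular homology'', i.e.\ checking on basis simplices that the simplicial map $f_{a,b}$ commutes with each face operator $\partial_j$. Your bookkeeping is sound and fills in details the paper leaves implicit: defining $f_*$ through the simplicial maps $f_{a,b}$ rather than only $f_0$, not needing injectivity on objects, and the low-degree convention for $d_1$.
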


\begin{proof}
This follows from the same straightforward computation as in standard singular homology.
\end{proof}

\begin{cor}
\label{cor:17}
\begin{enumerate}
    \item If $f : \cS \to \cT$ is a simplicially enriched functor then, for every $n \in \mathbb{N}$, $f_*$ induces a $R$-$R'$-bimodule homomorphism 
    \[\G_n(R, f, R') : \G_n(R, \cS, R') \to \G_n(R, \cT, R')\]
    
    When there is no ambiguity, we will simply denote it by $\G_n(f)$
    \item If $f$ is injective on objects, then for every $a \in R(\cS)$, $b \in R'(\cT)$ and $u \in G_n(R, \cS, R')$:
    \[ \G_n(f)(a.u.b) = f_R(a).\,\G_n(f)(u)\,.f_S(b)\]

\end{enumerate}
\end{cor}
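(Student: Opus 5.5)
The plan is to obtain both points from the fact that $f_*$ is a chain map, which is the content of the preceding proposation, together with Theorem \ref{thm:15}.

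For point 1, I first assemble the commutative squares of the preceding proposition, one for each pair of objects $(a,b)$ of $\cS$. Since $C_n(R,\cS,R')$ is the coproduct of the $C_{n,a\to b}(R,\cS,R')$, and both $f_*$ and $\partial$ are $R$-$R'$-linear, these squares give $d_{n+1}\circ f_* = f_*\circ d_{n+1}$ on all of $C_{n+1}(R,\cS,R')$. Hence $f_*$ sends cycles to cycles, since $d_n z=0$ implies $d_n f_*(z)=f_*(d_n z)=0$. It also sends boundaries to boundaries, since $f_*(d_{n+1}c)=d_{n+1}f_*(c)$. Therefore $f_*$ descends to a well-defined map $\G_n(f)([z]) = [f_*(z)]$ on the quotients $\ker d_n/\operatorname{im} d_{n+1}$. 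This map is $R$-$R'$-linear because $f_*$ is, and because the $R$-$R'$-structure on homology is induced from the one on chains. Functoriality, namely $\G_n(gf)=\G_n(g)\G_n(f)$ and $\G_n(\mathrm{id})=\mathrm{id}$, follows from $(gf)_*=g_*f_*$.

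For point 2, I assume $f$ is injective on objects, so that $f_R$ and $f_S$ exist and are algebra morphisms by Theorem \ref{thm:15}. The bimodule structure on $\G_n(R,\cS,R')$ is induced from the one on chains: the $d_n$ are bimodule homomorphisms, so cycles and boundaries are sub-bimodules and the quotient inherits the action $a.[z].b=[a.z.b]$. Take $u=[z]$. Then $\G_n(f)(a.u.b)=[f_*(a.z.b)]$. Point 2) of Theorem \ref{thm:15} gives $f_*(a.(z.b)) = f_R(a).f_*(z.b) = f_R(a).f_*(z).f_S(b)$, applying the left identity and then the right one. So
\[\G_n(f)(a.u.b) = [f_R(a).f_*(z).f_S(b)] = f_R(a).\G_n(f)(u).f_S(b).\]

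The only delicate point is bookkeeping rather than mathematics. The statement writes $b\in R'(\cT)$, but $f_S(b)$ only makes sense for $b\in R'[\cS]$, so I would read the hypothesis as $b\in R'[\cS]$. I also need the square of the preceding proposition for every degree, including low degrees where $d_n$ was only defined for $n\geq 2$; there I would adopt the convention $d_1=0$ (or $d_0=0$) so that $\G_0$ and $\G_1$ are covered as well.
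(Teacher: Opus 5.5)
Your proof is correct and takes the same route as the paper. The paper gives no separate proof: the corollary follows from the preceding proposition, which makes $f_*$ a chain map so that it descends to homology, and from point 2) of Theorem \ref{thm:15}, applied on representatives; your reading $b\in R'[\cS]$ is the right fix for the typo. One small point: instead of setting $d_1=0$, which would change $\G_0$, note that the preceding proposition is stated for every $n\in\mathbb{N}$ and so already covers the usual alternating-face map $C_1\to C_0$ (with $d_0=0$), so the paper's restriction ``$n\geq 2$'' in the definition of $d_n$ appears to be a slip.
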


\begin{cor}
\label{cor:18}
Homology is invariant up to isomorphisms of simplicially enriched categories i.e. isomorphic simplicially enriched categories give rise to homology bimodules that are isomorphic in the category of bimodules over $R$-algebras on the left and $R'$-algebras on the right.
\end{cor}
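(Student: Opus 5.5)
The strategy is to derive Corollary~\ref{cor:18} formally from Theorem~\ref{thm:15} and Corollary~\ref{cor:17}, using only functoriality. Let $f : \cS \to \cT$ be an isomorphism of simplicially enriched categories with inverse $g : \cT \to \cS$, so that $gf = \mathrm{Id}_\cS$ and $fg = \mathrm{Id}_\cT$. An isomorphism is bijective on objects, so both $f$ and $g$ are injective on objects. Hence Theorem~\ref{thm:15} applies to each, and gives algebra morphisms $f_R : R[\cS] \to R[\cT]$, $f_S : R'[\cS] \to R'[\cT]$ and $g_R$, $g_S$ in the other direction.

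The first step is to show that $f_R$ and $g_R$ are mutually inverse, and likewise $f_S$ and $g_S$. By point 3) of Theorem~\ref{thm:15} we have $(gf)_R = g_R f_R$. On generators, $(\mathrm{Id}_\cS)_R(t) = t$, so $(\mathrm{Id}_\cS)_R$ is the identity of $R[\cS]$. If one prefers not to rely on the ``functorial'' clause, the same conclusion follows directly: both sides are $R$-linear and agree on the basis of $1$-morphisms, because $g_0 f_0 = (gf)_0$. The second step is the analogous statement at chain level. The maps $f_* : C_n(R,\cS,R') \to C_n(R,\cT,R')$ and $g_*$ satisfy $g_* f_* = (gf)_* = \mathrm{Id}$ and $f_* g_* = \mathrm{Id}$, since on basis simplices $f_*$ is just $f_n$ and $g_n f_n = \mathrm{id}$. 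By the commutative square for $\partial$ (the proposition preceding Corollary~\ref{cor:17}), these are chain maps. It follows that $\G_n(f)$ and $\G_n(g)$ are mutually inverse, because homology of chain maps is functorial and preserves identities.

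Finally I assemble the result in $_R\amod_{R'}$. Take the triple $(f_R, \G_n(f), f_S)$ as a morphism from $(R[\cS], \G_n(R,\cS,R'), R'[\cS])$ to $(R[\cT], \G_n(R,\cT,R'), R'[\cT])$. The compatibility condition $\G_n(f)(a.u.b) = f_R(a).\G_n(f)(u).f_S(b)$ required in the definition of $_R\amod_{R'}$ is exactly point 2 of Corollary~\ref{cor:17}, since $f$ is injective on objects. The triple $(g_R, \G_n(g), g_S)$ is likewise a morphism in the reverse direction. Composition in $_R\amod_{R'}$ is componentwise, so the two composites are identity triples, which gives the isomorphism.

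I expect the only real obstacle to be checking that the identity functor induces identity maps and that composites are respected on all three components. That is, one must confirm that the ``functoriality'' asserted in Theorem~\ref{thm:15} genuinely covers $f_R$ and $f_S$ as well as $f_*$. I would settle this by the basis argument above rather than relying on the statement. A minor point is that the bimodule action on the homology $\G_n$ must be the one induced from the chain level. This holds because each $d_n$ is a bimodule homomorphism, so cycles and boundaries are sub-bimodules, and the compatibility descends to the quotient.
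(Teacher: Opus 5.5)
Your proof is correct and is essentially the paper's argument. The paper's one-line proof says only that isomorphisms are injective on objects, so Theorem~\ref{thm:15} and Corollary~\ref{cor:17} apply to both $f$ and its inverse, and functoriality then makes the induced triples mutually inverse in $_R\amod_{R'}$. You fill in what the paper leaves implicit: the componentwise inverses for $f_R$, $f_S$ and $f_*$, the compatibility condition taken from Corollary~\ref{cor:17}(2), and the bimodule action on $\G_n$ descending from the chain level.
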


This follows from the fact that isomorphisms are injective functions, and as such induce functorial homomorphisms in homology. 

\subsection{Relative homology for $(\infty,1)$-categories}

This subsection generalizes an earlier construction of directed topology for finite precubical sets \cite{eric}.

\paragraph{Overview } A crucial aspect of singular homology is that the chain complex $C_n(X)$ of a space $X$ can be quotiented out by the chain complex $C_n(A)$ of a subspace $A$ in the abelian category  $_R \md$. Then the boundary operator 
\[\partial : C_n(X) \to C_{n-1}(X)\]
induces a boundary operator
\[ \partial : C_n(X){/ C_n(A)} \to C_{n-1}(X){/ C_{n-1}(A)}\]
Which gives rise to a short exact sequence of chain complexes over $_R \md$. By a standard homological algebra result, this short exact sequence of chain complexes induces a long exact sequence relating the homology groups of each complex.\\

It would be very desirable to have a similar result in our case that relates the homology of an $(\infty,1)$-category to that of a sub-$(\infty,1)$-category. However, since the category of bimodules over varying algebras is not abelian we must be careful.

\subsubsection{Exact sequence of a sub-simplicially enriched category}
\label{sec:exact}

Take $\cS$ a simplicially enriched category and $\cT$ a simplicially enriched subcategory of $\cS$. \\

Let $n \in \mathbb{N}$. Then $(\cT(A,B))_n \subset (\cS(A,B))_n$, so $C_n(R, \cT, R')$ can be seen as a sub-$R$-$R'$-bimodule of $C_n(R, \cS, R')$.

\begin{dfnt}

\begin{enumerate}
    \item The \textbf{bimodule of extended $\cT$-chains} is the sub-$R[\cS]$-$R'[\cS]$-bimodule of $C_n(R, \cS, R')$ generated by $C_n(R, \cT, R')$ (or, equivalently, by the n-simplices of $\cT$). Denote it by:
    \[C^{\cS}_n(R, \cT, R')\]
 
    \item The \textbf{bimodule of  $\cT$-relative $\cS$-chains} is 
    \[C_n(R, \cS / \cT, R') := C_n(R, \cS, R'){/ C^{\cS}_n(R, \cT, R')}\]
\end{enumerate}
\end{dfnt}
 
 \begin{prop}
In the abelian category of $s$-unital $R[\cS]-R'[\cS]$-bimodules, there is a short exact sequence of chain complexes:

 \[\begin{tikzcd}
	0 & C^{\cS}_*(R, \cT, R') & C_*(R, \cS, R') & C_*(R, \cS/ \cT, R') & 0
	\arrow[from=1-1, to=1-2]
	\arrow["j", from=1-2, to=1-3]
	\arrow["p", from=1-3, to=1-4]
	\arrow[from=1-4, to=1-5]
\end{tikzcd}\]
 \end{prop}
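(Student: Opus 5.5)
We need to check four things: each term is an object of $\,_{R[\cS]}\smod_{R'[\cS]}$, the boundary maps are morphisms there and restrict to the subcomplex, $j$ and $p$ are chain maps, and the sequence is exact in each degree. By Theorem \ref{thm:13}, $C_n(R,\cS,R')$ is an $s$-unital $R[\cS]$-$R'[\cS]$-bimodule. By the proposition following the boundary definition, $d_n$ is a bimodule homomorphism. By Proposition \ref{prop:11}, $\,_{R[\cS]}\smod_{R'[\cS]}$ is abelian, and its kernels and cokernels are computed on underlying abelian groups.

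\emph{Step 1: the subcomplex.} $C^{\cS}_n(R,\cT,R')$ is the sub-bimodule generated by the $n$-simplices of $\cT$. As a set it consists of finite sums of elements $\lambda\times\tau\times\mu$ (plus $R$-$R'$-combinations of the $\tau$), with $\tau\in(\cT(A,B))_n$. I will check that it is $s$-unital. Every element $m$ of it is an element of the $s$-unital bimodule $C_n(R,\cS,R')$, so there are idempotent sums $e=\sum_{X} 1_X$ and $e'$ with $e.m=m=m.e'$. These $e,e'$ lie in $R[\cS]$, so the sub-bimodule is left and right $s$-unital. The argument is the same one used in Theorem \ref{thm:13} and Proposition \ref{prop:12}: a sub-bimodule of an $s$-unital bimodule is $s$-unital.

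\emph{Step 2: $d$ preserves the subcomplex.} This is the point needing actual content. Since $d_n$ is a bimodule map, $d_n(\lambda\times\tau\times\mu)=\lambda\times d_n\tau\times\mu$. Moreover $d_n\tau=\sum_j(-1)^j\partial_j\tau$, and each $\partial_j\tau$ lies in $(\cT(A,B))_{n-1}$ because $\cT$ is a simplicially enriched subcategory, so its hom-objects are simplicial subsets. Hence $d_n$ maps $C^{\cS}_n(R,\cT,R')$ into $C^{\cS}_{n-1}(R,\cT,R')$, and the restriction is a bimodule morphism.

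\emph{Step 3: the quotient and exactness.} The quotient $C_n(R,\cS/\cT,R')$ is the cokernel of the inclusion $j$ in the abelian category of Proposition \ref{prop:11}, so it is again $s$-unital. Concretely, if $e.m=m$ then $e.[m]=[m]$. By Step 2, $d_n$ descends to a boundary map on the quotients, with $d\circ d=0$ inherited. This makes $j$ and $p$ chain maps by construction. In each degree, $j$ is injective, $p$ is surjective, and $\ker p=\operatorname{im} j$ by definition of the quotient, since all of this is computed on underlying groups, as Proposition \ref{prop:11} records. Degreewise short exactness together with the chain-map property gives the short exact sequence of chain complexes. The only low-degree caveat is that $d_n$ was defined for $n\geq 2$; in the lowest degrees the complexes are truncated, and exactness there is immediate.
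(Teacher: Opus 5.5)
Your proof is correct and follows the same route as the paper. In both, $d_n$ is a bimodule map that sends $n$-simplices of $\cT$ to chains on $\cT$, so it preserves the generated sub-bimodule and descends to the quotient, after which $j$ mono, $p$ epi and $\ker p = \operatorname{im} j$ are immediate. Your explicit check that the sub-bimodule and the quotient stay $s$-unital, so that the sequence really lives in $\,_{R[\cS]}\smod_{R'[\cS]}$, is a detail the paper leaves implicit.
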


\begin{proof}
Since the boundary operator of $\cS$ is a $R[\cS]$-$R'[\cS]$-bimodule morphism, it sends the bimodule generated by $C_n(R,\cT, R')$ into the bimodule generated by 
\[\partial \left( C_n(R,\cT, R')\right) \subset C_{n-1}(R,\cT, R')\]
thus 
\[\partial \left(C^{\cS}_n(R,\cT, R')\right) \subset
C^{\cS}_{n-1}(R,\cT, R')\]
and $\partial$ induces a boundary operator on 
$\left(
C^{\cS}_n(R,\cT, R')
\right)_{n \in \mathbb{N}}$
This gives rise to the chain complex $C^{\cS}_*(R, \cT, R')$, and the inclusion $j_n: \ C^{\cS}_n(R,\cT, R') \to C_n(R, \cS, R')$ is by definition a monomorphism of chain complexes.\\

Moreover, the fact that
\[\partial \left(C^{\cS}_n(R,\cT, R')\right) \subset
C^{\cS}_{n-1}(R,\cT, R')\]
also means that the boundary operator on $\cS$ induces a boundary operator:
\[
\partial: C_n(R, \cS/ \cT, R') \to C_{n-1}(R, \cS/ \cT, R')
\]
that defines the chain complex $C_*(R, \cS/ \cT, R')$, and 
$p: \ C_*(R, \cS, R') \to C_*(R, \cS/ \cT, R')$, the canonical projection, 
is by definition a chain complex epimorphism, whose kernel is indeed the image of the monomorphism $j=(j_n)_{n\in \N}$. 
\end{proof}

\begin{dfnt}
\begin{itemize}
    \item The homology of the chain complex $C_*(R, \cS/ \cT, R')$ is called the \textbf{homology of $\cS$ relative to $\cT$} denoted by $\G (R, \cS/ \cT, R')$.
    \item The homology of the chain complex $C^{\cS}_*(R,  \cT, R')$ is called the \textbf{homology of $\cT$ extended in $\cS$} denoted by $\G^{\cS} (R, \cT, R')$.
\end{itemize}
\end{dfnt}

According to 
the zig-zag lemma, the short exact sequence of chain complexes yields a long exact sequence of homology objects in the abelian category of $s$-unital bimodules:

\[\begin{tikzcd}
	&& \vdots \\
	\G^{\cS}_{n+1}(  \cT) && \G_{n+1}(\cS) &&  \G_{n+1}(\cS/ \cT) \\
	\G^{\cS}_n( \cT) && \G_{n}( \cS) &&  \G_n(\cS/ \cT)\\
	\G^{\cS}_{n-1}(  \cT) && \G_{n-1}(\cS) &&  \G_{n-1}(\cS/ \cT) \\
	&& \vdots & {}
	\arrow[from=1-3, to=2-1]
	\arrow["{j_*}", from=2-1, to=2-3]
	\arrow["{p_*}", from=2-3, to=2-5]
	\arrow[dashed, "{\delta_{n+1}}", from=2-5, to=3-1]
	\arrow["{j_*}"', from=3-1, to=3-3]
	\arrow["{p_*}"', from=3-3, to=3-5]
	\arrow[dashed, "{\delta_{n}}", from=3-5, to=4-1]
	\arrow["{j_*}"', from=4-1, to=4-3]
	\arrow["{p_*}"', from=4-3, to=4-5]
	\arrow[from=4-5, to=5-3]
\end{tikzcd}\]

As such, the homology of $\cS$ can be computed from the homology of $\cT$ extended in $\cS$ and the homology of $\cS$ relative to $\cT$.

\subsubsection{Formulation using the extension of scalars functor}
We would like to have a systematic way to compute the chain complex $C^{\cS}_*(R,\cT, R')$, or at least its homology bimodules, from $C_*(R,\cT, R')$ and $R[\cS]$.
A partial answer can be given by describing $C^{\cS}_n(R,\cT, R')$ in terms of an extension of scalars.

Let $R$ and $R'$ be commutative rings, and $n$ an integer. 
Let $\cS$ be a simplicially enriched category and $\cT$ a sub-simplicially enriched category of $\cS$. 

Then the canonical injection functor $\cT \to \cS$ induces a canonical functor of simplicially enriched categories $\cT\to \cS$. As an injective dimap, the inclusion $\cT\to \cS$ induces natural algebra injections $R[\cT] \to R[\cS]$ and $R'[\cT] \to R'[\cS]$.
Then the extension of scalars along those inclusions of algebras gives us a $s$-unital $R[\cS]$-$R'[\cS]$-bimodule:

\[ R[\cS] \otimes_{R[\cT]} C_n(R, \cT, R') \otimes_{R'[\cT]} R'[\cS]\]

Since $f: R[\cS] \times C_n(R, \cT, R') \times R'[\cS] \to C_n(R, \cS, R')$ is $\mathbb{Z}$-trilinear and balanced on each side, it factorizes through the tensor product into an application:

\[\iota_n:  \begin{cases} 
      R[\cS] \otimes_{R[\cT]} C_n(R, \cT, R') \otimes_{R'[\cT]} R'[\cS]
      &\to C_n(R, \cS, R')\\
      x \otimes m \otimes y &\mapsto x.m.y
   \end{cases}
\]

\begin{dfnt}
\label{def:transfer}
We will call $\iota_n$ the \textbf{transfer map}. 
\end{dfnt}
The transfer map is always a $R[\cS]$-$R'[\cS]$-bimodule morphism, but may not be an injection. It is useful because of the following obvious property:

\begin{prop}
The image of the transfer map is exactly $C^{\cS}_n(R,\cT, R')$. 
\end{prop}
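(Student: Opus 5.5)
We prove the two inclusions between $\mathrm{Im}(\iota_n)$ and $C^{\cS}_n(R,\cT,R')$ directly, using the explicit description of both sides as sets of finite sums.

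\emph{The image is contained in the extended chains.} The tensor product $R[\cS] \otimes_{R[\cT]} C_n(R, \cT, R') \otimes_{R'[\cT]} R'[\cS]$ is additively generated by elementary tensors $x \otimes m \otimes y$ with $x \in R[\cS]$, $m \in C_n(R,\cT,R')$ and $y \in R'[\cS]$. Each such tensor is sent to $x.m.y$. Since $C^{\cS}_n(R,\cT,R')$ is the sub-bimodule generated by $C_n(R,\cT,R')$, it contains $m$ and is stable under both actions, so it contains $x.m.y$. Because $\iota_n$ is additive, every element of the image, being a finite sum of such terms, lies in $C^{\cS}_n(R,\cT,R')$.

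\emph{The extended chains are contained in the image.} We first describe the sub-bimodule generated by a subset $X$ of an $s$-unital bimodule. A priori it consists of finite sums of elements of the form $(a,r).m.(b,s)$ with $m$ in the $R$-$R'$-span of $X$ and $(a,r)$, $(b,s)$ in the unitalizations $\widehat{R[\cS]}$ and $\widehat{R'[\cS]}$; this is the submodule generated in the sense of Theorem \ref{thm:3}. We then remove the unitalization using $s$-unitality. By Theorem \ref{thm:13} and Proposition \ref{prop:12}, for each simplex $\sigma \in (\cT(A,B))_n$ there are idempotents $e$, $e'$ (sums of identities $1_A$ and $1_B$ in $R[\cS]$ and $R'[\cS]$) with $e.\sigma.e' = \sigma$. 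Then
\[
(a,r).\sigma.(b,s) = (a + r.e).\sigma.(b + s.e'),
\]
where $a + r.e \in R[\cS]$ and $b + s.e' \in R'[\cS]$. The same works for a finite $R$-$R'$-combination $m$ of simplices, taking $e$ to be the sum of the identities of all sources involved, and similarly for $e'$. Hence every element of $C^{\cS}_n(R,\cT,R')$ is a finite sum $\sum_k x_k.m_k.y_k$ with $x_k \in R[\cS]$, $m_k \in C_n(R,\cT,R')$ and $y_k \in R'[\cS]$, which equals $\iota_n\left(\sum_k x_k \otimes m_k \otimes y_k\right)$. This gives the reverse inclusion.

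The only step needing care is this replacement of unitalization elements by genuine algebra elements, together with the cross-terms $a.\sigma.s$ and $r.\sigma.b$ that arise when expanding $(a,r).\sigma.(b,s)$. Both are handled by the identities $e.\sigma = \sigma$ and $\sigma.e' = \sigma$, which hold because $e$ and $e'$ act as units on the relevant simplices. Finally, $\iota_n$ is a bimodule morphism, since it is well defined by the balancing already noted before Definition \ref{def:transfer}. Its image is therefore a sub-bimodule, consistent with the conclusion $\mathrm{Im}(\iota_n) = C^{\cS}_n(R,\cT,R')$.
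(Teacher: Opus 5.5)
Your proof is correct. The paper gives no argument beyond calling this property obvious, and your two inclusions are exactly the routine verification it leaves implicit: elementary tensors $x\otimes m\otimes y$ land in the generated sub-bimodule, and conversely the identities $1_A\in R[\cS]$, $1_B\in R'[\cS]$ absorb the unitalization scalars, so every element of $C^{\cS}_n(R,\cT,R')$ has the form $\sum_k x_k.m_k.y_k=\iota_n(\sum_k x_k\otimes m_k\otimes y_k)$. The reverse inclusion can be shortened slightly: the image of the bimodule morphism $\iota_n$ is a sub-bimodule, and it contains each $n$-simplex $\sigma$ of $\cT$ with source $A$ and target $B$ as $\sigma=\iota_n(1_A\otimes\sigma\otimes 1_B)$.
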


Now, the boundary operators $\partial_*$ of the complex $C_{*} (\cT)$ induce operators
\[
\mathrm{Id}_{R[\cS]} \otimes \partial_n \otimes \mathrm{Id}_{R'[\cS]}
: \;
R[\cS] \otimes C_n(\cT) \otimes R'[\cS]
\to
R[\cS] \otimes C_{n-1}(\cT) \otimes R'[\cS]
\]
\noindent that define a chain complex
\[
\left(
R[\cS] \otimes C_{*}(\cT) \otimes R'[\cS]
\; , \;
\mathrm{Id}_{R[\cS]} \otimes \partial_{*} \otimes \mathrm{Id}_{R'[\cS]}
\right)
\]

Since the transfer map is a morphism of $R[\cS]-R'[\cS]$-bimodules, it is also a morphism of $R[\cT]-R'[\cT]$-bimodules by restriction of scalars, and thus it induces a morphism of chain complexes:  
\[\iota : 
R[\cS] \otimes C_{*}(\cT) \otimes R'[\cS]  
\to
C_{*}^{\cS}(\cT)
\]

\begin{prop}
The transfer map induces a short exact sequence of chain complexes of $s$-unital $R[\cS]-R'[\cS]$-bimodules

 \[\begin{tikzcd}
	R[\cS] \otimes C_{*}(\cT) \otimes R'[\cS]  & C_{*}^{\cS}(\cT) & 0
	\arrow["\iota", from=1-1, to=1-2]
	\arrow[ from=1-2, to=1-3]
\end{tikzcd}\]

As such, there is a long exact sequence of chain complexes:
\[\begin{tikzcd}
	0 & {\mathrm{Ker}( \iota)} & {R[\cS] \otimes C_{*}(\cT) \otimes R'[\cS]} \\
	& {C_{*}^{\cS}(\cT) } & {C_*(R, \cS, R')} & {C_*(R, \cS/ \cT, R')} & 0
	\arrow[from=1-1, to=1-2]
	\arrow[from=1-2, to=1-3]
	\arrow["\iota", from=1-3, to=2-2]
	\arrow[from=2-2, to=2-3]
	\arrow[from=2-3, to=2-4]
	\arrow[from=2-4, to=2-5]
\end{tikzcd}\]
\end{prop}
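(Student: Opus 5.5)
The plan is to show three things in turn. First, $\iota$ is a morphism of chain complexes of $s$-unital $R[\cS]$-$R'[\cS]$-bimodules. Second, it is degreewise surjective onto $C_*^{\cS}(\cT)$. Third, we splice it with its kernel and with the short exact sequence $0 \to C^{\cS}_*(R,\cT,R') \to C_*(R,\cS,R') \to C_*(R,\cS/\cT,R') \to 0$ from the preceding proposition.

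\textbf{Chain map.} The source $R[\cS] \otimes_{R[\cT]} C_n(R,\cT,R') \otimes_{R'[\cT]} R'[\cS]$ is $s$-unital on both sides. This follows from the proposition on extension of scalars, since by Proposition \ref{prop:12} $R[\cS]$ and $R'[\cS]$ are $s$-unital, hence firm, as modules over themselves. One can also see it directly: an element is a finite sum of tensors $x\otimes m\otimes y$, and a finite sum of identities $e_X$ acts as the identity on the finitely many $x$'s and $y$'s involved. The differential $\mathrm{Id}\otimes\partial_n\otimes\mathrm{Id}$ is well defined because $\partial_n$ is an $R[\cT]$-$R'[\cT]$-bimodule map. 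This is the earlier proposition on $d_n$ applied to $\cT$. To check that $\iota$ commutes with the differentials, it suffices to test on generators $x\otimes m\otimes y$ with $x,y$ $1$-morphisms and $m$ a simplex of $\cT$. There the claim reads $\partial_n(x.m.y)=x.\partial_n(m).y$, which is exactly the bimodule-homomorphism property of $d_n$ over $R[\cS]$-$R'[\cS]$ proved earlier. Thus $\iota$ is a chain map into $C_*(R,\cS,R')$, and its image lands in $C^{\cS}_*(R,\cT,R')$.

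\textbf{Surjectivity.} By the proposition stating that the image of the transfer map is exactly $C^{\cS}_n(R,\cT,R')$, the corestriction $\iota_n$ is onto in each degree. In the abelian category $\,_{R[\cS]}\smod_{R'[\cS]}$ of Proposition \ref{prop:11}, this is the exactness of $R[\cS]\otimes C_*(\cT)\otimes R'[\cS]\to C^{\cS}_*(\cT)\to 0$. Kernels in that category are computed on underlying abelian groups, so $\mathrm{Ker}(\iota_n)$ is an $s$-unital sub-bimodule, and these kernels form a subcomplex. This gives the short exact sequence $0\to\mathrm{Ker}(\iota)\to R[\cS]\otimes C_*(\cT)\otimes R'[\cS]\to C^{\cS}_*(\cT)\to 0$.

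\textbf{Splicing.} Finally we splice this with the earlier sequence along $C^{\cS}_*(\cT)$, composing $\iota$ with the monomorphism $j$. The result is an exact sequence of complexes
\[0\to\mathrm{Ker}(\iota)\to R[\cS]\otimes C_*(\cT)\otimes R'[\cS]\xrightarrow{j\iota} C_*(R,\cS,R')\to C_*(R,\cS/\cT,R')\to 0.\]
Exactness at $C_*(R,\cS,R')$ holds because $\mathrm{Im}(j\iota)=\mathrm{Im}(j)=\mathrm{Ker}(p)$, using surjectivity of $\iota$. I expect the main obstacle to be the $s$-unitality of the tensor-product complex, together with the fact that the differentials descend through the nonunital balanced relations (including the $R$- and $R'$-balancing relations). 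The remaining steps are formal consequences of Proposition \ref{prop:11} and the earlier results.
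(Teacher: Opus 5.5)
Your proposal is correct and follows the route the paper leaves implicit. You show that $\iota$ is a chain map of $s$-unital $R[\cS]$-$R'[\cS]$-bimodules, that it is onto $C^{\cS}_*(\cT)$ by the proposition identifying its image, and you read the displayed long sequence as the splice, via $j\iota$, of $0\to\mathrm{Ker}(\iota)\to R[\cS]\otimes C_*(\cT)\otimes R'[\cS]\to C^{\cS}_*(\cT)\to 0$ with the earlier short exact sequence. Your direct check that $\iota$ commutes with the differentials, using that $d_n$ is an $R[\cS]$-$R'[\cS]$-bimodule map, is a cleaner justification than the paper's appeal to restriction of scalars.
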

We will show in next section that, for directed spaces, the case where $ \mathrm{Ker}( \iota) = 0$ has a nice geometric interpretation. In this case of interest, the long exact sequence completely collapses and thus gives us a method to concretely compute the homology of $\cS$ from that of $\cT$. This allows us to reduce the complexity of the problem. 

However, it is still an open question whether this sequence can be used for computations in a more general setting through spectral sequences (assuming that $\mathrm{Ker}( \iota)$ is well understood for the chosen $\cS$ and $\cT$).

\section{Application: directed homology of directed spaces}

We will use the construction of the previous section and apply it to the theory of d-spaces. The motivation for this is to generalize homological results about precubical spaces \cite{eric}, which act as a natural combinatorial model for directed spaces.

\subsection{Directed spaces}

There exists multiple notions of space with a direction of time, but one of them stands out for being particularly general and well-fitted for homology: directed spaces (see e.g. \cite{thebook,grandisbook} for all definitions in this section), which we present briefly in this section.

\begin{dfnt}[Directed space, \cite{grandisbook}]
A \textbf{directed space} is a pair $\Vec{X} = (X, dX)$ where $X$ is a topological space and $dX$ is a set of continuous paths in $X$, called the \textbf{directed paths} 
of the space, such that;
\begin{enumerate}
    \item[(i)] Every constant path is a directed path
    \item[(ii)] Every increasing reparametrization of a directed path is a directed path i.e. for every directed path $\gamma \in dX$ and for any function $\varphi : [0,1] \to [0,1]$ which is continuous and increasing, $\gamma \circ \varphi \in dX$  
    \item[(iii)] For any directed paths $\gamma$ and $\delta$ so that $\gamma(1) = \delta(0)$, the concatenation $\gamma \cdot \delta$ of $\gamma$ and $\delta$ is a directed path
\end{enumerate}
\end{dfnt}

One simple example of a directed space is the closed interval $[0,1]$ with increasing function $[0,1] \to [0,1]$ as directed paths. This space is the \textbf{directed unit interval}.

Morphisms of directed spaces are defined as follows.
\begin{dfnt}
Let $\Vec{X} = (X, dX)$ and $\Vec{Y} = (Y, dY)$ be two directed spaces. A continuous function $f : X \to Y$ is called a \textbf{directed map}, or \textbf{dimap}, when:
\[\forall \gamma \in dX, \quad 
f \circ \gamma \in dY\]

\end{dfnt}
A dimap from the directed unit interval to a directed space $\Vec{X}$ is called a \textbf{dipath}. The dipaths of $\Vec{X}$ are exactly its directed paths.

Directed spaces together with dimaps (and composition of functions) form the category of directed spaces. An isomorphism in this category is called a \textbf{dihomeomorphism}. 

In a given directed space, the set of paths from one point to another is endowed with the compact-open topology.

\begin{dfnt}
Let $\Vec{X} = (X, dX)$ be a directed space and $x,y \in X$. 

The set of all directed paths $\gamma \in dX$ so that 
\[\gamma(0) = x \textit{ and } \gamma (1) = y\]
together with the compact-open topology is called the \textbf{path space} from $x$ to $y$ and is denoted by $\mathbf{P(\Vec{X})_x^y}$ 
\end{dfnt}

Given a directed space whose points model the states of a program and whose directed paths model all the possible sequences of state transitions, the execution traces should be understood as paths \textit{up to bijective increasing reparametrization} \cite{grandisbook} in the sense that the speed with which we transition through the states does not matter. 

The relation $\sim$ ``being the same up to bijective increasing reparametrization'' defined by
$\gamma \sim \delta$  iff there exists a continuous increasing bijective function 
\[f : [0,1] \to [0,1]\]
\noindent such that
\[\delta = \gamma \circ f \]
is indeed an equivalence relation.

\begin{dfnt}
Given a directed space $\Vec{X} = (X, dX)$, a \textbf{trace} from $x \in X$ to $y \in X$ is an equivalence class of paths from $x$ to $y$ for the equivalence relation here denoted by $\sim$.
If $\gamma$ is a dipath, we will denote by $\mathbf{[\gamma]}$ the corresponding trace.
\end{dfnt}

Now note that, if $\gamma$ and $\delta$ are dipaths such that $\gamma(1) = \delta(0)$, and if $f, g : [0,1] \to [0,1]$ are continuous, bijective and increasing functions, then:
\begin{enumerate}
    \item $\gamma \circ f (1) = \delta \circ g (0)$
    \item $(\gamma \circ f) \cdot  (\delta \circ g) = (\gamma \cdot \delta) \circ (f \cdot g)$
\end{enumerate}

This allows us to define concatenation on traces, and this gives rise to a category.
\begin{dfnt}[Trace category \cite{thebook}]
Let $\Vec{X} = (X, dX)$ be a directed space. Its \textbf{trace category} is the category with objects the points of $X$, with morphisms from $x$ to $y$ the traces from $x$ to $y$, and with composition given by concatenation of traces.
\end{dfnt}

\subsection{Homology of directed spaces}

In this section, we recap for completeness, the construction of Section 6 of \cite{semi-ab}. 


We recall the following: 
the standard simplex of dimension $n$ is $$
\Delta_n=\left\{(t_0,\ldots,t_n) \mid \forall i\in \{0,\ldots,n\}, \ t_i \geq 0 \mbox{ and } \sum\limits_{j=0}^n t_j=1\right\}
$$
For $n \in \N$, $n
\geq 1$ and $0 \leq k \leq n$, the $k$th ($n-1$)-face (inclusion) of the topological $n$-simplex is the subspace inclusion
$$\delta_k: \ \Delta_{n-1} \rightarrow \Delta_n$$
induced by the inclusion
$$(t_0,\ldots,t_{n-1}) \rightarrow (t_0,\ldots,t_{k-1},0,t_k,
\ldots, t_{n-1})$$
For $n \in \N$ and $0\leq k < n$, the 
$k$th degenerate 
$n$-simplex is the surjective map
$$
\sigma_k: \ \Delta_n \rightarrow \Delta_{n-1}$$
\noindent induced by the surjection: 
$$(t_0,\ldots,t_n)\rightarrow (t_0,\ldots,t_{k}+t_{k+1},\ldots,t_n)$$

Still as in \cite{semi-ab},  
we call $p$ an $i$-trace, $i\geq 1$, or trace of dimension $i$ of $X$, any 
 continuous map 
$$p: \Delta_{i-1} \rightarrow Tr(X)$$ 
\noindent which is such that: 
\begin{itemize}
\item $p(t_0,\ldots,t_{i-1})(0)$ does not depend on $t_0,\ldots,t_{i-1}$ and we denote it by $s_p$ (``start of $p$")
\item and $p(t_0,\ldots,t_{i-1})(1)$ is constant as well, that we write as $t_p$ (``target of $p$")
\end{itemize}
We write $T_i(X)$ for the set of $i$-traces in $X$. We write $T_i(X)(a,b)$, $a$, $b \in X$, for the subset of $T_i(X)$ made of $i$-traces from $a$ to $b$. 

In short, an $i$-trace of $X$ is a particular $(i-1)$-simplex of the trace space of $X$, for which we can define boundary and degeneracy maps, as usual: 

\begin{itemize}
\item Maps $d_{j}$, $j=0,\ldots,i$ acting on $(i+1)$-paths 
$p: \ \Delta_i \rightarrow Tr(X)$, $i\geq 1$: 
$$d_j(p)=p\circ \delta_j$$
\item Maps $s_k$, $k=0,\ldots,i-1$ acting on $i$-paths 
$p: \Delta_{i-1} \rightarrow Tr(X)$, $i\geq 1$: 
$$s_k(p)=p\circ \sigma_k$$
\end{itemize}

This makes $\K_{\Vec{X}}(a,b)=((T_i(X)(a,b))_{i\geq 0},d_j,s_k)$ (for some $a$, $b$ in $X$), and $\K_{\Vec{X}}=((T_i(X))_{i\geq 0},d_j,s_k)$ simplicial sets as shown in \cite{semi-ab}. 

Now, we can define a composition $q\circ p$ of an $i$-trace $p$ with an $j$-trace $q$, derived from the concatenation operation on traces $*$: $q\circ p$ is a $(i+j)$-trace with $q\circ p(t_0,\ldots,t_{i-1},t_{i},\ldots,t_{i+j-1})=p(t_0,\ldots,t_{i-1})*q(t_i,\ldots,t_{i+j-1})$.

\begin{thm}
\label{thm:21}
With the construction above, $\K_{\Vec{X}} (x,y)$ is a simplicially-enriched category.
\end{thm}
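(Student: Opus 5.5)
The plan is to check the axioms of a simplicially enriched category directly for the composition $q\circ p$ defined just before Theorem \ref{thm:21}, reading the statement as being about the whole family $\K_{\Vec{X}}(x,y)$, $x,y\in X$. The objects are the points of $X$. The hom-simplicial sets are $\K_{\Vec{X}}(x,y)$, which are already simplicial sets by \cite{semi-ab}. The composition sends an $i$-trace $p$ from $x$ to $y$ and a $j$-trace $q$ from $y$ to $z$ to the $(i+j)$-trace
\[
q\circ p\,(t_0,\ldots,t_{i+j-1}) = p(t_0,\ldots,t_{i-1})*q(t_i,\ldots,t_{i+j-1}).
\]

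\textbf{Well-definedness.} First I will check that $q\circ p$ really is an $(i+j)$-trace. The concatenation $p(\cdot)*q(\cdot)$ is always defined, because every value of $p$ ends at $t_p=y=s_q$. Its source is constantly $s_p$ and its target is constantly $t_q$, since the source of a concatenation is the source of its first factor and the target is the target of its second. For continuity, I will show that concatenation of traces $Tr(X)\times_X Tr(X)\to Tr(X)$ is continuous. Concatenation of dipaths is continuous for the compact-open topology, and the relation $(\gamma\circ f)\cdot(\delta\circ g)=(\gamma\cdot\delta)\circ(f\cdot g)$ recorded above shows that it is compatible with $\sim$, so it descends to the quotient. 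This descent is the point that needs care: the product of two quotient maps need not be a quotient map. I will handle it using local compactness of $[0,1]$, or else by working with the quotient topology on the fibred product directly.

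\textbf{Associativity and identities.} Next I will prove the axioms of a category, strictly and coordinatewise. For associativity, both $(r\circ q)\circ p$ and $r\circ(q\circ p)$ evaluate at $(t_0,\ldots,t_{i+j+k-1})$ to $p(\ldots)*q(\ldots)*r(\ldots)$, with the same three blocks of coordinates. So associativity reduces to associativity of concatenation of traces. This holds strictly, not merely up to homotopy, precisely because traces are taken modulo increasing reparametrization: $(\gamma\cdot\delta)\cdot\epsilon$ and $\gamma\cdot(\delta\cdot\epsilon)$ differ by a bijective increasing reparametrization. For identities, the identity at $x$ is the $1$-trace $\Delta_0\to Tr(X)$ whose value is the trace of the constant path at $x$. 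Concatenating with a constant trace gives back the same trace, again up to bijective increasing reparametrization, so this element is a two-sided unit. I will also check that its degeneracies $s_k$ give units at each simplicial level.

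\textbf{Compatibility with the simplicial structure.} Finally I will show that composition respects the face and degeneracy maps, so that it defines a simplicial composition. A face $d_k=-\circ\delta_k$ inserts a zero either in the first block of $i$ coordinates or in the second block. The same holds for the degeneracies $\sigma_k$, except at the junction index between the two blocks. Hence $d_k(q\circ p)$ equals $q\circ d_kp$ or $d_{k-i}q\circ p$, and similarly for $s_k$. This is a Leibniz/join-type compatibility. Passing from it to a simplicial map $\K_{\Vec{X}}(x,y)\times\K_{\Vec{X}}(y,z)\to\K_{\Vec{X}}(x,z)$, as the definition of a simplicially enriched category requires, means organising composition along the standard simplicial structure on the product of the two simplices. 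I will use the same composition formula, evaluated on the coordinates of the two factors. Then I will check that the induced maps commute with $d_j$ and $s_k$ levelwise, using the block computation above.

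I expect the main obstacle to be this last bookkeeping step: the composition raises dimension while the enrichment axioms are levelwise, so the reindexing of faces and degeneracies at the junction between the two blocks of coordinates needs care. After that, the continuity of concatenation on the quotient trace space is the only remaining technical point.
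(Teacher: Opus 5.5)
Your associativity argument is fine, and it is all the paper's proof contains. Two of your other steps fail, however.

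\textbf{Identities.} Your identity step is false for the trace relation the paper fixes. Traces are taken modulo \emph{bijective} increasing reparametrizations, and these can neither create nor remove an interval on which a path is constant. Take $\gamma$ the identity dipath of the directed unit interval and $c$ the constant path at $1$. Then $\gamma\cdot c$ is $t\mapsto\min(2t,1)$, which is constant on $[1/2,1]$, so it is not of the form $\gamma\circ f$ with $f$ bijective. Hence $[\gamma]*[c]\neq[\gamma]$, and the constant traces and their degeneracies are not units at any simplicial level. To get identities you must change the relation, e.g.\ to the equivalence generated by non-decreasing surjective reparametrizations (as in Fahrenberg--Raussen). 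You then have to redo the descent and continuity of $*$. For the bijective relation, the argument that works is not local compactness of $[0,1]$. It is that $P(\Vec{X})_x^y\to Tr(X)_x^y$ is the orbit map of $\mathrm{Homeo}^+[0,1]$ acting by homeomorphisms, hence open, so a product of such quotient maps is again a quotient map. That argument is lost once you pass to the surjective relation.

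\textbf{Composition.} You never actually specify the levelwise composition $\K_{\Vec{X}}(x,y)_n\times\K_{\Vec{X}}(y,z)_n\to\K_{\Vec{X}}(x,z)_n$ that the enrichment requires. The route you sketch, reindexing the join-type formula at the junction of the two blocks, cannot supply it. That formula sends simplices of dimensions $i-1$ and $j-1$ to one of dimension $i+j-1$. It is also not well defined as written: for a point of $\Delta_{i+j-1}$, the block $(t_0,\dots,t_{i-1})$ is not a point of $\Delta_{i-1}$, and rescaling breaks down on the faces where a block is zero. The right composition is pointwise. For $n$-simplices $p,q:\Delta_n\to Tr(X)$, set $\circ_n(p,q)=\bigl(t\mapsto p(t)*q(t)\bigr)$; that is, apply the singular functor, which preserves products, to the continuous map $*$. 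This is also the $\circ_n$ the paper uses later to define the bimodule actions. With it, $d_j$ and $s_k$, being precomposition with $\delta_j$ and $\sigma_k$, commute with composition immediately, and associativity is pointwise associativity of $*$. There is no junction bookkeeping at all: the obstacle you anticipate comes from starting from the wrong formula.
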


\begin{proof}
The composition of simplexes is associative, as concatenation of traces is known to be associative (this is concatenation of paths, modulo reparametrization). 
\end{proof}

\begin{rem}
This simplicially enriched category is fibrant, the proof being essentially the same as that for the singular simplicial complex in undirected topology.
\end{rem}

\begin{dfnt}
For $R$ and $R'$ two commutative unital rings, the homology of a directed space $\Vec{X}$ is defined as 
\[\G_n (R, \Vec{X}, R')  := \G_n (R, \K_{\Vec{X}}, R')\]
\end{dfnt}


\begin{rem}
A similar construction has also been considered in the context of framed bicategories, see \cite{augustin2026}.
\end{rem}


\subsection{Relative homology for nice subspaces}
Let $\Vec{X} = (X, dX)$ be a directed space.

\begin{dfnt}
A \textbf{subspace} of $\Vec{X}$ is a directed space $\Vec{A} = (A, dA)$ so that
\begin{enumerate}
    \item $A \subset B$
    \item $dA \subset dB$
\end{enumerate}
\end{dfnt}

This definition of a subspace is coherent with the category-theoric notion of subobject in directed spaces.

Given a subspace $\Vec{A}$ of $\Vec{X}$, $\K_{\Vec{A}}$ is a sub-simplicially enriched category of $\K_{\Vec{X}}$, and we can try to understand the long exact sequence of relative homology described in the last section. We will study a particular case where the homology of $\Vec{A}$ relative to $\Vec{X}$ can be expressed explicitly using the transfer map.

\begin{dfnt}
Let $\Vec{A}$ be a subspace of a directed space.
\begin{itemize}
    \item $\Vec{A}$ is said to be \textbf{full} if for any directed path $\gamma \in dX$ such that $\gamma([0,1]) \in A$, $\gamma \in dA$
    \item $\Vec{A}$ is said to be \textbf{separating} if for any directed path $\gamma \in dX$, $\gamma^{-1} (A)$ is connected, and thus an interval of $[0,1]$ (which may be the empty interval).
\end{itemize}

\end{dfnt}

Separatingness amounts to saying that any directed path in $\Vec{X}$ enters and leaves $A$ one time at most. This property has been used in \cite{eric} for getting relative homology exact sequences in the case of directed homology of precubical sets, and we follow here the same path towards finding such homology exact sequences. 



\begin{dfnt}
We will call a pair of directed spaces $(\Vec{A}, \Vec{X})$ a \textbf{relative pair} when
\begin{enumerate}
    \item $\Vec{A}=(A,dA)$ is a full subspace of $\Vec{X}=(X,dX)$
    \item $\Vec{A}$ is separating
    \item $A$ is closed in $X$
\end{enumerate}
\end{dfnt}

.

\subsection{Canonical injection of chains}
 What we will now show is that, for a relative pair, the transfer map of Definition \ref{sec:extensionscalar} from the extension of scalars of the chain complex on $\Vec{A}$ (from $R[\K_{\Vec{A}}]$-bimodules to $R[\K_{\Vec{X}}]$-bimodules) to the chain complex on $\Vec{X}$ is indeed an injective function. To prove this, we will need a notion of reduced forms.

Similarly to \cite{eric}, where the author used a notion of canonical form for elements of the chain complex of bimodules, in order to understand relative (directed) homology of certain precubical sets, we define a reduced form below, in the case of directed spaces:  

\begin{dfnt}[Reduced form]
An element of the extension of scalars of $C_n(\K_{\Vec{A}})$ from a $R[\K_{\Vec{A}}]$-bimodule to a $R[\K_{\Vec{X}}]$-bimodule: 
\[\sum_{i \in I} a_i \otimes m_i \otimes b_i 
\;
\in
\;
R[\K_{\Vec{X}}] \otimes C_n(\K_{\Vec{A}}) \otimes R'[\K_{\Vec{X}}]\]
is said to be in \textbf{reduced form} if 
\begin{enumerate}
    \item For every $i \in I$, $a_i = \lambda_i u_i$ where $\lambda_i \in R$ and $u_i$ is a trace that cannot be factored by a non-constant trace of $A$ on the right, 
    i.e. if $u_i = t t'$ with $t'$ a trace in $A$, then $t'$ is a constant trace
    \item For every $i \in I$, $b_i = \mu_i v_i$ where $\mu_i \in R'$ and $v_i$ is a trace that cannot be factored by a non-constant trace of $A$ on the left, 
    i.e. if $v_i = t' t$ with $t'$ a trace in $A$, then $t'$ is a constant trace
    \item For every $i \in I$, $m_i$ is a $n$-simplex in the trace space of $\Vec{A}$
    
\end{enumerate}
\end{dfnt}

Reduced forms will allow us to understand better the kernel of the transfer map for relative pairs.

\begin{prop}
If $(\Vec{A}, \Vec{X})$ is a relative pair then every element has a reduced form.
\end{prop}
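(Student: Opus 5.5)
We reduce first to pure tensors whose factors are simplices and traces. Every element of $R[\K_{\Vec{X}}] \otimes C_n(\K_{\Vec{A}}) \otimes R'[\K_{\Vec{X}}]$ is a finite sum of pure tensors $a \otimes m \otimes b$. The tensor product is additive in each slot and $R$-, $R'$-linear, so each pure tensor can be expanded into a finite sum of terms $\lambda u \otimes m \otimes \mu v$. Here $u$, $v$ are single traces (1-morphisms of $\K_{\Vec{X}}$), $m$ is an $n$-simplex of some $\K_{\Vec{A}}(x,y)$, and $\lambda\in R$, $\mu\in R'$. We may assume the endpoints match: $u$ ends at $x$ and $v$ starts at $y$. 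Otherwise we insert the identities, writing $u\otimes m = u \otimes 1_x.m = u1_x\otimes m = 0$, and the term vanishes. Since a sum of reduced forms is again a reduced form, it suffices to put one such term in reduced form.

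To reduce a single term $u\otimes m\otimes v$, we factor $u$ as $u = t\,t'$. Here $t'$ is the maximal final segment of $u$ lying in $A$, and $t$ is the remaining part. The separating hypothesis is what makes this work. Take a dipath $\gamma$ representing $u$; it ends at $x\in A$. The set $\gamma^{-1}(A)$ is an interval containing $1$, say $J$ with infimum $s$. Because $A$ is closed in $X$, $J$ is closed, so $J=[s,1]$. Then $\gamma|_{[s,1]}$ is a path with image in $A$, and fullness gives $\gamma|_{[s,1]}\in dA$ after reparametrizing. This yields traces $t=[\gamma|_{[0,s]}]$ and $t'=[\gamma|_{[s,1]}]$ with $t'$ a trace of $\Vec{A}$. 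If $s=1$ we take $t'$ constant. Independence of the representative is checked by noting that reparametrization only moves $s$ along with the parametrization.

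Next we slide $t'$ across the tensor. It is a 1-morphism of $\K_{\Vec{A}}$, hence an element of $R[\K_{\Vec{A}}]$, so balancedness gives $t t'\otimes m = t\otimes t'.m$. Moreover $t'.m = s_{n-1}\cdots s_0(t')\circ_n m$ is again a single $n$-simplex of $\K_{\Vec{A}}$, because composition preserves simplices. We do the same on the right with $v$: its maximal initial segment in $A$ is $[0,s']$, a closed interval containing $0$, and we absorb it into the simplex. The term becomes $\lambda t\otimes m'\otimes \mu w$.

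It remains to check that $t$ admits no non-constant right factor in $A$. Suppose $t = r r''$ with $r''$ a non-constant trace of $A$. Then $u = r(r'' t')$, so $u$ has a final segment in $A$ strictly longer than $t'$. This contradicts the maximality of $J=\gamma^{-1}(A)$, since the representing path of $r$ concatenated with the representing path of $r''t'$ spends more time in $A$. We expect the main obstacle to be making this last comparison rigorous at the level of traces up to reparametrization. In particular, one must rule out a nonconstant trace of $A$ that is "stationary in time", and check that the decomposition point is well defined on equivalence classes. Closedness of $A$ and the connectedness of $\gamma^{-1}(A)$ should handle both points.
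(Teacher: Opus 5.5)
Your proposal is correct and follows essentially the same route as the paper. You reduce to elementary tensors $u\otimes m\otimes v$ of single traces and a simplex, kill mismatched endpoints by inserting a constant trace, split the representing dipath at the left endpoint of the closed interval $\gamma^{-1}(A)\ni 1$ (using closedness and fullness), and slide the $A$-segments into the simplex by balancedness. Your maximality argument for why the remaining left factor has no non-constant right factor in $A$ supplies a justification the paper only asserts; the cleanest way to finish it is to note that a factorization $[\gamma|_{[0,s]}]=rr''$ with $r''$ non-constant in $A$ would give $\gamma([s_1,1])\subset A$ for some $s_1<s$.
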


\begin{proof}
Every element of $R[\K_{\Vec{X}}] \otimes C_n(\K_{\Vec{A}}) \otimes R'[\K_{\Vec{X}}]$ is a linear combination of elementary tensors, so we only have to prove the decomposition for elementary tensors.

By trilinearity of $(u, x, v) \mapsto u \otimes x \otimes v$, we can reduce the problem into finding a reduced form of $t \otimes m \otimes t'$ where $m$ is a trace simplex of $\Vec{A}$ and $t$, $t'$ are traces in $\Vec{X}$ such that $t \otimes m \otimes t' \neq 0$. 

Let us  take a path $\gamma \in dX$ associated to a trace $t = [\gamma]$. Then $\gamma^{-1}(A)$ is a closed interval.

 Moreover, $t \otimes m \otimes t' \neq 0$ so $t .m.t' \neq 0$. Indeed, if $t.m.t'$ was null, then we would have either $t.m = 0$ or $m.t' = 0$. Consider the first case for example: denoting by $c$ the constant trace at the beginning endpoint of $m$, we would have $t.c = 0$ and thus:
 \[
 t \otimes m \otimes t' = 
 t \otimes c.m \otimes t' =
 t.c \otimes m \otimes t' = 0
 \]
 \noindent which is absurd.
 
 Thus, $\gamma(1) = m(\cdot)(0) \in A$. Since $\gamma^{-1}(A)$ is closed, and by closeness of $A$, the connected component of $1$ in $\gamma^{-1}(A)$ is a compact interval, and we will call its smallest point $s_0$. Splitting $\gamma$ at $s_0$ gives two new traces $t_1, t_2$ such that:
\begin{itemize}
    \item $t = t_1 t_2$
    \item $t_2$ is a trace in $\Vec{A}$ (because $\Vec{A}$ is closed) 
    \item $t_1$ cannot be factored on the right by a non-constant trace of $\Vec{A}$
\end{itemize}

We can write in the same way $t' = t'_1 t'_2$, where $t'_2$ cannot be factored in $\Vec{A}$ on the left and $t'_1 \in dA$. Then:
\[
t \otimes m \otimes t' =
(t_1 t_2) \otimes m \otimes (t_1' t_2') =
t_1 \otimes (t_2.m.t_1') \otimes t_2'
\]
\end{proof}

We will now establish the unicity of reduced forms, starting with elementary tensors in reduced form before extending the result.

\begin{lem}
If $a\otimes m \otimes b$ and $a'\otimes m' \otimes b'$ are reduced forms in $R[\K_{\Vec{X}}] \otimes C_n(\K_{\Vec{A}}) \otimes R'[\K_{\Vec{X}}]$, then: 

\[
a.m.b = a'.m'.b'
\text{ if and only if }
\begin{cases}
a = a' \\
m = m' \\
b = b'
\end{cases}
\]

\end{lem}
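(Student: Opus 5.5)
The reverse implication is immediate, so the work is in the direct one. By the definition of reduced form we may write $a=\lambda u$, $a'=\lambda' u'$, $b=\mu v$, $b'=\mu' v'$, with $u,u',v,v'$ traces of $\Vec{X}$ and $m,m'$ $n$-simplices of the trace space of $\Vec{A}$. (If the scalars are allowed to vary, the claim should be read up to moving scalars across the tensor; we first treat unit scalars and then compare coefficients in the free module $C_n(R,\K_{\Vec{X}},R')$.) The plan is to write $u.m.v$ as a single $n$-simplex of the trace space of $\Vec{X}$, namely
\[(s_{n-1}\dots s_0)(u)\circ_n m\circ_n (s_{n-1}\dots s_0)(v),\]
whose value at each point $\tau\in\Delta_{n-1}$ is the concatenated trace $u * m(\tau) * v$. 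Since $C_n$ is free on simplices, the equality $a.m.b=a'.m'.b'$ (assumed nonzero) forces $\lambda\mu=\lambda'\mu'$ and the pointwise equality of traces
\[u*m(\tau)*v \;=\; u'*m'(\tau)*v' \qquad\text{for all }\tau.\]

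Next we locate the factors inside a single trace. Fix $\tau$ and a representative path $\gamma$ of $u*m(\tau)*v$. Because $\Vec{A}$ is separating and $A$ is closed, $\gamma^{-1}(A)$ is a closed interval $[\alpha,\beta]$. The middle part $m(\tau)$ is a trace in $A$, so its parameter interval lies inside $[\alpha,\beta]$. By reducedness, $u$ cannot be factored on the right by a nonconstant trace of $A$. Fullness of $\Vec{A}$ shows that any subpath of $\gamma$ lying in $A$ is a trace of $\Vec{A}$, so the end of $u$ cannot dwell in $A$. Hence the split point between $u$ and $m(\tau)$ is exactly $\alpha$: it is the first entry of $\gamma$ into $A$, or the endpoint if $u$ is constant. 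Symmetrically, the split point between $m(\tau)$ and $v$ is $\beta$, the last exit from $A$. The same reasoning applies to $u'*m'(\tau)*v'$ with the same $\gamma$. Therefore $u$ and $u'$ are both the trace of $\gamma|_{[0,\alpha]}$, $v=v'$, and $m(\tau)=m'(\tau)$. Since this holds for every $\tau$, we get $m=m'$, and the scalar comparison then gives $a=a'$ and $b=b'$.

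The main obstacle is making the split point canonical at the level of traces rather than paths. Reparametrization equivalence may collapse constant segments, and a trace could touch $A$ at an isolated point without the factorization being nonconstant. The first thing I would try is to pass to a representative in which $u$, $m(\tau)$ and $v$ occupy consecutive parameter intervals. I would then argue via separation that $\gamma^{-1}(A)$ is one interval whose infimum is intrinsic to the trace, because it is invariant under increasing bijective reparametrization. The subtle case is a constant $u$ or a point where $u$ merely touches $A$. There the connectedness of $\gamma^{-1}(A)$ forces the whole remainder of $u$ after the touching point to lie in $A$, which contradicts reducedness unless that remainder is constant.
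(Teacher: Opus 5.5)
Your proposal is correct and follows essentially the same route as the paper. For each point of the simplex you take a representative of the concatenated trace, and you use separation, closedness of $A$ and reducedness to show that the split points are the reparametrization-invariant entry into and exit from $A$ (the paper phrases this as the reparametrization $f_s$ fixing the split parameter $\frac{1}{2}$), which forces the outer traces and the middle values $m(\tau)$ to coincide. Your version is more careful than the paper's: you use fullness explicitly to rule out a nonconstant tail of $u$ (or head of $v$) inside $A$, and you restrict to nonzero products and read $a=a'$, $b=b'$ up to moving scalars across the tensor, without which the literal statement fails (e.g.\ $2u\otimes m\otimes v$ versus $u\otimes m\otimes 2v$ over $R=R'=\mathbb{Z}$).
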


\begin{proof}
For every $s \in \Delta_n$, there exists $\sigma(s)$ that represents $m(s)$, and two dipaths $\gamma, \delta$ such that $a.m.b (s)$ is represented by $(\gamma \cdot \sigma(s) ) \cdot \delta$,
with $[\gamma] = a$ and $[\delta] = b$.

In the same way, there also exists a path  $\sigma'(s)$ representing $m'(s)$ and two dipaths $\gamma', \delta'$ so that $a'.m'.b'(s)$ is represented by $(\gamma' \cdot \sigma'(s) ) \cdot \delta'$,  with $[\gamma'] = a$ and $[\delta'] = b$.

Since $a.m.b(s) = a'.m'.b'(s)$, there exists a continuous increasing bijective function
\[
f_s : [0,1] \to [0,1]
\]
\noindent such that 
\[
    (\gamma \cdot \sigma(s) ) \cdot \delta =  
    \left( (\gamma' \cdot \sigma'(s) ) \cdot \delta' \right) \circ f_s \]

 However, as in the proof of the existence of reduced forms,
 \begin{align*}
     \inf \left\{t 
 \; \vline \;
 \delta (t) \in A
 \right\}
 =\frac{1}{2} 
 &= 
 \inf \left\{t 
 \; \vline \;
 \left( (\gamma' \cdot \sigma'(s) ) \cdot \delta' \right) \circ f_s \in A
 \right\}\\
 &=  f_s^{-1} \left( \inf \left\{t 
 \; \vline \;
 \left( (\gamma' \cdot \sigma'(s) ) \cdot \delta' \right) \in A
 \right\}
 \right)
 \\
 &= f_s^{-1} \left(\frac{1}{2}\right)
 \end{align*}

 so we necessarily have $f_s(\frac{1}{2}) = \frac{1}{2}$, and thus $f_s$ induces a bijective reparametrization of $\delta'$ into $\delta$, which proves that $b=b'$.

The same reasoning gives $a = a'$, and that the restriction of $f_s$ to $[f_s^{-1} (\frac{1}{4}), f_s^{-1} (\frac{1}{2})]$ sends $\sigma'(s)$ to $\sigma(s)$, thus $m(s) = m'(s)$ for all $s \in \Delta_n$ and thus $m = m'$.
\end{proof}

\begin{rem}
In the case of precubical sets, only the case $n=0$ requires conditions on relative pairs, as the bimodules of $n$-chains are free for $n \geq 1$ \cite{eric}. This idea generalizes to our case.

Note that the separatingness of $\Vec{A}$ in $\Vec{X}$ is only needed in the case where the simplexes in the trace spaces may be constant. As such, in the case where $n \geq 1$,  normalizing our chain complexes (i.e. quotienting out the degenerate simplices)  would allow us to weaken the hypotheses considerably, as only the first homology bimodule would require special treatment. 

More precisely, we believe that, for $n \geq 1$, $R[\K_{\Vec{X}}] \otimes C_n(\K_{\Vec{A}}) \otimes R'[\K_{\Vec{X}}]$ is the direct sum of 

$1.$ The bimodule generated by the non-constant traces, which is free.

$2.$ The bimodule generated by the constant traces, which is likely projective under the hypotheses above, and which can be killed by normalizing singular chains before taking the homology.
\end{rem}


\begin{prop}
\label{prop:24}
If $(\Vec{A}, \Vec{X})$ is a relative pair then the transfer map is injective.
\end{prop}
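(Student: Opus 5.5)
We use the existence of reduced forms together with the uniqueness lemma for elementary reduced tensors. Take $x \in R[\K_{\Vec{X}}] \otimes C_n(\K_{\Vec{A}}) \otimes R'[\K_{\Vec{X}}]$ with $\iota_n(x) = 0$. By the preceding proposition, write $x$ in reduced form
\[
x = \sum_{i \in I} \lambda_i u_i \otimes m_i \otimes \mu_i v_i ,
\]
with $u_i, v_i$ traces of $\Vec{X}$ that are not factorizable on the right (resp. left) by non-constant traces of $\Vec{A}$, and each $m_i$ an $n$-simplex of the trace space of $\Vec{A}$. Since the tensor is balanced over $R$ and $R'$, move the scalars inside: $\lambda_i u_i \otimes m_i \otimes \mu_i v_i = u_i \otimes (\lambda_i.m_i.\mu_i) \otimes v_i$.

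Next, group the terms so that all triples $(u_i, m_i, v_i)$ are pairwise distinct. If two indices carry the same triple, their terms are combined by linearity in the middle factor into a single term with coefficient $\lambda_i \otimes \mu_i + \lambda_j \otimes \mu_j$ in $R \otimes R'$. Concretely, the middle entries become arbitrary elements of the free $R$-$R'$-bimodule on one generator $m$. After this, the reduced form reads $x = \sum_{k} u_k \otimes c_k m_k \otimes v_k$ with $c_k \in R \otimes_{\mathbb{Z}} R'$ and the triples $(u_k,m_k,v_k)$ pairwise distinct.

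Now apply $\iota_n$. We get $0 = \sum_k c_k \,(u_k.m_k.v_k)$ in $C_n(R, \K_{\Vec{X}}, R')$, where each $u_k.m_k.v_k$ is a single $n$-simplex of the trace space of $\Vec{X}$. Here the terms with $u_k.m_k.v_k = 0$ (mismatched endpoints) may be discarded beforehand. The argument in the existence proof shows such elementary tensors are already $0$, because $u.m = u.c.m = 0$ with $c$ the constant trace at the start of $m$.

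The uniqueness lemma says that reduced elementary tensors with equal images have equal triples. Hence the simplices $u_k.m_k.v_k$ are pairwise distinct basis elements of the free bimodule $C_n(R,\K_{\Vec{X}},R')$. By freeness every coefficient $c_k$ vanishes, so $x = 0$ and $\iota_n$ is injective.

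The main obstacle is the bookkeeping in the grouping step. The reduced form produced by the existence proposition is not canonical as a sum. Moreover, coefficients live in $R$ and $R'$ separately, while the basis of $C_n$ is free over $R \otimes R'$. So one must check carefully that every element of the extended tensor can be written as a combination over $R\otimes R'$ of elementary reduced tensors $u \otimes m \otimes v$ with distinct triples. I would do this by showing that the map $(u,m,v) \mapsto u\otimes m\otimes v$ from reduced triples spans, and then using the uniqueness lemma to identify the target basis. A secondary point to verify is that the uniqueness lemma applies uniformly to degenerate (constant) simplices $m$. This is exactly where separatingness and closedness of $A$ enter, through the $\inf$ argument locating the entry point into $A$.
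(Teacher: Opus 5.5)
Your proposal is correct and follows the paper's proof: you write elements as sums of reduced elementary tensors, use the uniqueness lemma to match reduced triples with image simplices, and conclude from the freeness of $C_n(R,\K_{\Vec{X}},R')$ over $R\otimes_{\mathbb{Z}} R'$. The only difference is the order of bookkeeping. The paper first groups the original terms by image simplex, reduces each one, and then transports the vanishing through an auxiliary bimodule map $R.(a.n.b).R' \to R.(a\otimes n\otimes b).R'$; you reduce first, group by triple, and read off $c_k = 0$ directly, which is slightly cleaner and needs the same lemma in contrapositive form.
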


\begin{proof}
Let $m \in \ker \iota$. It is possible to write $m$ as a finite sum of the form:
\[
\sum_{i \in I} (\lambda_i t_i) \otimes m_i \otimes (\mu_i t'_i)
\]
where $m_i$ is a trace simplex of $\Vec{A}$,  $t,t'$ are traces in $\Vec{X}$, $\lambda_i \in R$, $\mu_i \in R'$ , and
\[
\iota(m) = \sum_{i \in I} (\lambda_i .t_i). m_i . (\mu_i.t'_i) = 0
\]
This last equality happens in $C_n(\K_{\Vec{X}})$, which is a free $R$-$R'$-bimodule over simplexes of the trace space, so this equality is equivalent to the conjunction of its projections on all simplexes of the trace space of $\Vec{X}$. As such, take some $i_0 \in I$ and consider 
\[
J = \left\{i \in I \; | \; t_i.m_i.t'_i = t_{i_0}.m_{i_0}.t'_{i_0}\right\}
\]
Then 
\[
\sum_{j \in J} \lambda_j .(t_j. m_j . t'_j).\mu_j = 0
\]
But then, for every $j \in J$, $t_j \otimes m_j \otimes t'_j$ has a reduced form $a_j \otimes n_j \otimes b_j$ and
\[ a_j. n_j . b_j = t_j.m_j. t'_j 
= t_{i_0}.m_{i_0}.t'_{i_0} 
= a_{i_0}.n_{i_0}.b_{i_0} \]
\noindent so, according to the previous lemma, 
\begin{align*}
    a_j &= a_{i_0}\\
    n_j &= n_{i_0}\\
    b_j &= b_{i_0}
\end{align*}
\noindent and thus 
\begin{align*}
\sum_{j \in J} (\lambda_j .t_j) \otimes m_j \otimes  (\mu_j t'_j)
&= \sum_{j \in J} \lambda_j .(t_j \otimes m_j \otimes t'_j).\mu_j \\
&= \sum_{j \in J} \lambda_j .(a_j \otimes n_j \otimes b_j ). \mu_j\\
&= \sum_{j \in J} \lambda_j .(a_{i_0} \otimes n_{i_0} \otimes b_{i_0} ). \mu_j\\
\end{align*}
Then, since $C_n(\K_{\Vec{X}})$ is a free $R$-$R'$-bimodule with basis the trace simplices, the sub-$R$-$R'$-bimodule $R.(a_{i_0}.n_{i_0}.b_{i_0}).R'$ is a free $R$-$R'$-bimodule with basis $(a_{i_0}.n_{i_0}.b_{i_0})$. As such, there exists a unique $R$-$R'$-bimodule morphism 
\[f: 
R.(a_{i_0}.n_{i_0}.b_{i_0}).R' \to R.(a_{i_0} \otimes n_{i_0} \otimes b_{i_0} ).R', \quad
a_{i_0}.n_{i_0}.b_{i_0}\mapsto a_{i_0} \otimes n_{i_0} \otimes b_{i_0}
\]
\noindent and then
\[
\sum_{j \in J} \lambda_j .(a_{i_0} \otimes n_{i_0} \otimes b_{i_0} ). \mu_j 
= 
f\left( \sum_{j \in J} \lambda_j .(a_{i_0} . n_{i_0} . b_{i_0} ). \mu_j \right) = f(0) = 0
\]
Hence, for any $i_0 \in I$,
\[\sum_{j \in I, \; t_j.m_j.t_j = t_{i_0}.m_{i_0}.t'_{i_0}} (\lambda_j .t_j) \otimes m_j \otimes (t'_j . \mu_j ) = 0\]
and thus $m = 0$.
\end{proof}

In conclusion, when $(\Vec{A}, \Vec{X})$ is a relative pair then the transfer map is an isomorphism between $R[\K_{\Vec{X}}] \otimes C_n(\K_{\Vec{A}}) \otimes R'[\K_{\Vec{X}}]$ and $C_n^{\K_{\Vec{X}}} (R, \K_{\Vec{X}}, R')$.

As such, we obtain an exact sequence of chain complexes:

\[\begin{tikzcd}
	0 & R[\K_{\Vec{X}}] \otimes C_*(\K_{\Vec{A}}) \otimes R'[\K_{\Vec{X}}] & C_*^{\K_{\Vec{X}}}(R, \K_{\Vec{A}}, R') & 0
	\arrow[from=1-1, to=1-2]
	\arrow["\iota", from=1-2, to=1-3]
	\arrow[ from=1-3, to=1-4]
\end{tikzcd}\]
and the chain complexes are isomorphic. This allows us to simplify the exact sequence at the end of the last section:

\begin{prop}
There is a short exact sequence of chain complexes:
\[\begin{tikzcd}
	0 & {R[\cS] \otimes C_{*}(\cT) \otimes R'[\cS]} & {C_*(R, \cS, R')} & {C_*(R, \cS/ \cT, R')} & 0
	\arrow[from=1-1, to=1-2]
	\arrow[from=1-2, to=1-3]
	\arrow[from=1-3, to=1-4]
	\arrow[from=1-4, to=1-5]
\end{tikzcd}\]
As such, there is a long exact sequence of homology groups:

\[\begin{tikzcd}
	&& \vdots \\
	H_{n+1} \left(  {R[\cS] \otimes C_{*}(\cT) \otimes R'[\cS]}\right) &&
	\G_{n+1}(\K_{\Vec{X}}) &&
	\G_{n+1}(\K_{\Vec{X}}/ \K_{\Vec{A}}) \\
	H_n\left( {R[\cS] \otimes C_{*}(\cT) \otimes R'[\cS]}\right) &&
	\G_{n}( \K_{\Vec{X}}) && 
	\G_n(\K_{\Vec{X}}/ \K_{\Vec{A}})\\
	H_{n-1}\left(  {R[\cS] \otimes C_{*}(\cT) \otimes R'[\cS]}\right) &&
	\G_{n-1}(\K_{\Vec{X}}) && 
	\G_{n-1}(\K_{\Vec{X}}/ \K_{\Vec{A}}) \\
	&& \vdots & {}
	\arrow[from=1-3, to=2-1]
	\arrow["{j_*}", from=2-1, to=2-3]
	\arrow["{p_*}", from=2-3, to=2-5]
	\arrow[dashed, "{\delta_{n+1}}", from=2-5, to=3-1]
	\arrow["{j_*}"', from=3-1, to=3-3]
	\arrow["{p_*}"', from=3-3, to=3-5]
	\arrow[dashed, "{\delta_{n}}", from=3-5, to=4-1]
	\arrow["{j_*}"', from=4-1, to=4-3]
	\arrow["{p_*}"', from=4-3, to=4-5]
	\arrow[from=4-5, to=5-3]
\end{tikzcd}\]
\end{prop}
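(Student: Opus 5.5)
The plan is to splice together two results already established: the short exact sequence $0 \to C^{\cS}_*(R,\cT,R') \xrightarrow{j} C_*(R,\cS,R') \xrightarrow{p} C_*(R,\cS/\cT,R') \to 0$ from the subsection on the exact sequence of a sub-simplicially enriched category, and Proposition \ref{prop:24}. Throughout, $\cS = \K_{\Vec{X}}$ and $\cT = \K_{\Vec{A}}$ for a relative pair $(\Vec{A},\Vec{X})$.

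First, Proposition \ref{prop:24} says that each transfer map $\iota_n : R[\cS]\otimes_{R[\cT]} C_n(\cT)\otimes_{R'[\cT]} R'[\cS] \to C_n(R,\cS,R')$ is injective. Its image is exactly $C^{\cS}_n(R,\cT,R')$ by the proposition on the image of the transfer map. So corestriction gives a degreewise bijective bimodule morphism $\iota_n$ onto $C^{\cS}_n(R,\cT,R')$. Since $\iota$ was already shown to be a chain map, with $\iota\circ(\mathrm{Id}\otimes\partial\otimes\mathrm{Id}) = \partial\circ\iota$ by $R[\cS]$-$R'[\cS]$-linearity of $\partial$, $\iota$ is an isomorphism of chain complexes. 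Its inverse is automatically a bimodule morphism because the category of $s$-unital bimodules is abelian (Proposition \ref{prop:11}) and the underlying group map is bijective. One should also check that the tensor complex is indeed $s$-unital: elementary tensors $x\otimes m\otimes y$ with $x,y$ in the $s$-unital path algebras (Proposition \ref{prop:12}) are fixed by sums of identities, and the argument in the proof of Proposition \ref{prop:11} then handles finite sums.

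Next, compose $j$ with $\iota$. The sequence $0\to R[\cS]\otimes C_*(\cT)\otimes R'[\cS] \xrightarrow{j\iota} C_*(R,\cS,R') \xrightarrow{p} C_*(R,\cS/\cT,R')\to 0$ is then exact in each degree: $j\iota$ is injective as a composite of injections, its image equals $\mathrm{im}\, j = \ker p$, and $p$ is surjective. Everything lives in the abelian category ${}_{R[\cS]}\smod_{R'[\cS]}$, so the zig-zag (snake) lemma applies and yields the long exact homology sequence, with connecting morphisms $\delta_n$ defined as usual. Because $\iota$ is a chain isomorphism, it also identifies $H_n(R[\cS]\otimes C_*(\cT)\otimes R'[\cS])$ with $\G^{\cS}_n(\cT)$. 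The displayed sequence is therefore the earlier long exact sequence transported along $\iota_*$.

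The only step I expect to need care is the legitimacy of applying Proposition \ref{prop:24} degreewise, together with the resulting identification of the tensor complex as a chain complex in the abelian category. In particular, the kernels and cokernels used in the snake lemma must be computed in ${}_{R[\cS]}\smod_{R'[\cS]}$. I would verify this via the description in Proposition \ref{prop:11}, which says these kernels and cokernels are the underlying abelian-group ones. Once that is settled, the rest is formal.
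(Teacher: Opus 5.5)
Your proposal is correct and follows essentially the same route as the paper: Proposition \ref{prop:24}, together with the fact that the image of the transfer map is $C^{\cS}_*(R,\cT,R')$, makes $\iota$ an isomorphism of chain complexes. This isomorphic copy is substituted into the earlier short exact sequence $0\to C^{\cS}_*(R,\cT,R')\to C_*(R,\cS,R')\to C_*(R,\cS/\cT,R')\to 0$, and the zig-zag lemma is then applied in the abelian category ${}_{R[\cS]}\smod_{R'[\cS]}$, exactly as the paper does while leaving implicit the details you spell out (composing $j$ with $\iota$, $s$-unitality of the tensor complex, and the chain-map property of $\iota$).
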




\section*{Credit author statement}

{\bf Eliot M\'edioni}: Conceptualization, Writing - original draft;
{\bf Eric Goubault}: Conceptualization, Writing - Review \& Editing, Supervision.

\section*{Acknowledgments}
The authors thank J\'er\'emy Dubut, Augustin Albert and Sanjeevi Krishnan, for numerous interactions during the preparation of this work.

\section*{Declaration of competing interests and funding}

The authors have no competing interests to report in the preparation of this work. 

This research did not receive any specific grant from funding agencies in the public, commercial, or not-for-profit sectors.

\bibliographystyle{cas-model2-names}
\bibliography{biblio}

\end{document}